\documentclass[11pt]{amsart}
\usepackage{amsfonts,amsmath,amssymb,fullpage,graphicx, comment}
\usepackage{verbatim}
\usepackage{xcolor}
\usepackage{hyperref}

\usepackage{array}
\usepackage{booktabs}

\usepackage{bbm}
\usepackage{bm}
\usepackage{dsfont}

\newtheorem{hyp}{Hypothesis}

\newtheorem{thm}{Theorem}[section]

\newtheorem{lem}[thm]{Lemma}
\newtheorem{example}[thm]{Example}
\newtheorem{prop}[thm]{Proposition}
\theoremstyle{definition}
\newtheorem{defn}[thm]{Definition}
\theoremstyle{remark}
\newtheorem{rem}[thm]{\bf{Remark}}
\numberwithin{equation}{section}

\newcommand{\beas}{\begin{eqnarray*}}
	\newcommand{\eeas}{\end{eqnarray*}}
\newcommand{\bes} {\begin{equation*}}
	\newcommand{\ees} {\end{equation*}}
\newcommand{\be} {\begin{equation}}
	\newcommand{\ee} {\end{equation}}
\newcommand{\bea} {\begin{eqnarray}}
	\newcommand{\eea} {\end{eqnarray}}

\title{Large  values of Hardy's Function from the Selberg class}

\begin{document}
	
	\author{Ahammad Mostafa Hossain}
	\address{Ahammad Mostafa Hossain\\ Department of Mathematics \\
		Indian Institute of Technology Kharagpur \\
		Kharagpur-721302,  India.} 
	\email{mostafa.24@kgpian.iitkgp.ac.in}

	\author{Kamalakshya Mahatab}
	\address{Kamalakshya Mahatab\\ Department of Mathematics \\
		Indian Institute of Technology Kharagpur \\
		Kharagpur-721302,  India.} 
	\email{kamalakshya@maths.iitkgp.ac.in}

	\author{Akshaa Vatwani}
	\address{Akshaa Vatwani\\ Department of Mathematics \\
		Indian Institute of Technology Gandhinagar, \\
		Palaj,	Gandhinagar Gujarat-382355,  India.} 
	\email{akshaa.vatwani@iitgn.ac.in}    
	
	%\thanks{}	
	
	\subjclass[2020]{Primary 11M06; Secondary 11M41, 11M26} 
	\keywords{Selberg class, $L$-functions, Hardy's function, Resonance method, Cusp forms. }

\begin{abstract}
	Let  $Z_F(t):=F(1/2+it) H(1/2+it)^{-1/2}$ denote Hardy's function, where the function $F(s)$  belongs to the Selberg  class with polynomial Euler product and satisfies the functional equation $F(s)=H(s)\overline{F}(1-\bar{s})$. We prove that for every function  of   degree $1$ and for certain functions of degree $2$  belonging to the Selberg class, one has 
	
	\begin{align*}
		\max_{T\leq t\leq 2T} \pm Z_F(t) &\gg \exp\left(C_F \sqrt{ \frac{{\log T}}{{\log \log T}}  } \right),
	\end{align*}
	where the constant $C_F$ depends on $F(s)$ and are explicitly determined.
\end{abstract}

	\maketitle
	%\tableofcontents
	%\newpage
	\section{Introduction}
	The study of the Riemann zeta function $\zeta(s)$ plays an important role in understanding the distribution of the prime numbers. At the heart of this lies the important observation of Riemann that the Prime Number Theorem is equivalent to showing that $\zeta(s)$ has no zeros on the line $\Re(s)=1$. The well-known Riemann Hypothesis in fact asserts that all the zeros of $\zeta(s)$ in the critical strip $0<\Re(s)<1$ lie on the line $\Re(s) =\frac12$.	
	One of the major advances towards this conjecture was made by Hardy \cite{hardy1914zeros} who showed that $\zeta(s)$  has infinitely many zeros on the critical line $\Re(s)=\frac12$. A key point in the proof of this results is the introduction of Hardy's function for $\zeta(s)$: a real-valued function $Z(t)$ of the real valuable of $t$, whose zeros are exactly the zeros of $\zeta(s)$ on the critical line $\Re(s) =\frac12$. A similar argument applies to Dirichlet $L$-functions $L(s,\chi)$.

	In a different direction, the Riemann zeta function can be viewed as a prototype of a more general class of $L$-functions. Chandrasekharan and  Narasimhan \cite{chandrasekharan1968zeta} showed that the zeta functions of ideal classes in quadratic fields possess infinitely many zeros on the critical line by constructing a suitable analogue of Hardy's function. Hafner~\cite{hafner1983,hafner1987} showed that the $L$-functions
	attached to holomorphic cusp forms and Maass wave forms for the full
	modular group with trivial character have infinitely many zeros on the
	critical line. More precisely,  he proved  that $N_o(T)\gg T \log T$, where $N_o(T)$ counts the zeros on the critical line with imaginary part in $(0,T]$. Later,  Rezvyakova \cite{rez2010} established the corresponding result for
	$L$-functions associated with automorphic cusp forms on Hecke
	congruence subgroups of arbitrary integral weight. See the necessary definitions in \cite{iwaniec1997topics}.
	
	Wilton~\cite{wilton1929note} studied the additively twisted
	$L$-function of the modular discriminant $\Delta(z)$, defined by
	\[
	\rho(s;p,q)=\sum_{n\geq 1}\frac{\tau(n)}{n^s}e^{2\pi i n p/q}.
	\]
	He proved that, for rational $p/q$ satisfying
	$p^2\equiv 1\pmod q$, this function has infinitely many zeros on the
	line $\Re(s)=6$, where $\tau(n)$ denotes the $n$-th Fourier coefficient
	of $\Delta(z)$. His proof used a variant  of Hardy's method together
	with the Hardy-type function
	\[
	H_{\alpha}(t,n)=t^nR(t;p,q)e^{\alpha t},
	\qquad
	R(t;p,q)=\left(\frac{q}{2\pi}\right)^{6+it}
	\Gamma(6+it)\rho(6+it;p,q),
	\]
	where $-\pi/2<\alpha < \pi/2$.
	Later, Kim~\cite{kim2023infinitely} generalized this result to cusp
	forms of integral and half-integral weights of higher level. In \cite{Meher2017pujahari}, Meher, Pujahari, and Kotyada studied the zeros
	of $L$-functions associated with cusp forms. For half-integral weight
	$k+1/2$ cusp forms of level $4$, they showed that infinitely many zeros
	lie on the critical line $\Re(s)=\frac{k+1/2}{2}$. They also obtained a
	similar result for integral weight $k$ cusp forms of level $\mathcal{N}$,
	where infinitely many zeros lie on the critical line
	$\Re(s)=\frac{k}{2}$. This result was later generalized by Meher, Pujahari, and
	Shankhadhar~\cite{Meher2019pujahari} to half-integral weight cusp forms
	of level $4\mathcal{N}^2$.

	Many of the $L$-functions considered above, such as those attached to integral weight cusp forms and Dedekind zeta functions, belong to a broader axiomatic framework known as the Selberg class $\mathbb{S}$, introduced by Selberg \cite{selberg1991old} in 1992. This class consists of functions satisfying a set of analytic properties, including a functional equation of the type that will be essential for constructing an appropriate analogue of Hardy's function. We now recall the precise definition of the Selberg class.
	\begin{defn}{(The Selberg Class).}
		The Selberg class $\mathbb{S}$ is the collection of meromorphic functions $F(s)$ for which the following conditions hold.
		\begin{enumerate}
			\item[(i)] \textbf{Dirichlet series} - The function $F(s)$ admits a Dirichlet series representation 
			$$
			F(s)=\sum_{n=1}^{\infty}\frac{a_F(n)}{n^s},
			$$
			which converges absolutely for $\Re(s)>1$. Also, the leading coefficient $a_F(1)$ is normalized as $a_F(1)=1$.
			
			\item[(ii)] \textbf{Analytic continuation} - For some integer $k\geq 0$, the function $(s-1)^kF(s)$ is entire and of finite order.

			\item[(iii)] \textbf{Functional equation} - Choose $Q>0$, $\alpha_i > 0$, $\beta_i\in\mathbb{C}$, and $\omega\in\mathbb{C}$ with $\Re(\beta_i)\geq 0$ and $|\omega|=1$. Set
			\begin{equation}\label{Phi define}
				\Phi(s):=Q^s\prod_{i=1}^{m}\Gamma(\alpha_i s+\beta_i)F(s).
			\end{equation}
			Then $\Phi(s)$ satisfies the functional equation
			\begin{equation}\label{functional equation}
				\Phi(s)=\omega \overline{\Phi}(1-\bar{s}).
			\end{equation}

			\item[(iv)] \textbf{Euler product} - For $\Re(s)>1$, the function $F(s)$ has the Euler product
			\begin{equation}\label{euler pro}
				F(s)=\prod_{p\,\text{prime}} F_p(s),
			\end{equation}
			where
			$$ \log F_p(s)=\sum_{k=1}^{\infty} \frac{b_{F}(p^k)}{p^{ks}}$$ with $b_{F}(p^k)=O(p^{k\theta})$ for some $\theta < \frac12.$

			\item[(v)]\textbf{Ramanujan hypothesis} - For any $\epsilon >0,$ $$a_F(n)=O_{\epsilon}(n^{\epsilon}).$$
		\end{enumerate}  
		We further use a stronger version of axiom (iv), known as the polynomial Euler product:

		\begin{enumerate}
			\item[$ (\mathrm{iv'})$]\textbf{Polynomial Euler product} - for $\Re(s)>1$  we have 
			\begin{equation}\label{poly euler pro}
				F(s)=\prod_{p\,\text{prime}} \prod_{i=1}^{r} \left( 1-\frac{\nu_i(p)}{p^s}\right)^{-1},
			\end{equation}
			where $\nu_i(p)$ are complex numbers.
		\end{enumerate}
		Let $\mathbb{S}_{\text{poly}}$ denote the subclass of $\mathbb{S}$ comprising all $L$-functions that satisfy axiom $(\mathrm{iv'})$.
		This additional assumption is not expected to exclude any important $L$-functions arising in number theory. In fact, all presently known examples in the Selberg class are believed to satisfy $(\mathrm{iv'})$, at least under certain widely accepted conjectures. For our purposes, the main consequence of $(\mathrm{iv'})$ is that, together with the Ramanujan hypothesis $(\mathrm{v})$, it gives $|\nu_i(p)| \leq 1$ (see \cite[Lemma 2.2]{steuding2007value}). Therefore, we have
		\begin{equation*}
			a_F(p)=b_F(p)=\sum_{i=1}^{r} \nu_i(p) \quad \text{and} \quad |a_F(p)|\leq r.
		\end{equation*}
		The Euler product structure gives the multiplicativity of the coefficients, namely,
		$a_F(mn)=a_F(m)a_F(n)$ whenever $(m,n)=1$. In addition, every Euler factor admits a Dirichlet series representation
		$$
		F_p(s)=\sum_{k=0}^{\infty}\frac{a_F(p^k)}{p^{ks}},
		$$
		and this series converges absolutely in the half-plane $\Re(s)>0$. Moreover, $F_p(s)$ has no zeros for $\Re(s)>\theta$, where $\theta$ is the quantity appearing in \eqref{euler pro}.
	\end{defn}

	The constants appearing in the functional equation \eqref{Phi define} depend on $F$. Since the duplication formula for the $\Gamma$-function may give different forms of the functional equation, these constants are not unique. However, the quantity
	$$
	d_F:=2\sum_{i=1}^{m}\alpha_i
	$$
	is a well-defined invariant called the degree of $F$.
	It is conjectured that $d_F$ is always an integer. Henceforth, we will denote the members of $\mathbb{S}$ having degree $d$ by $\mathbb{S}_d.$ We also define the $\xi$-invariant and  the conductor by
	\begin{equation}\label{zai invariant}
		\xi_F := 2 \sum_{j=1}^{m}  \left ( \beta_j-\frac{1}{2} \right) \quad \text{and} \qquad q_F:= (2 \pi )^{d_F} Q^2 \prod_{j=1}^{m} \alpha_{j}^{2 \alpha_j},
	\end{equation}
	respectively. %It is conjectured that $q_F$ is non-negative integer.
	
	Kaczorowski and Perelli~\cite{kaczorowski1999structure,kaczorowski2011structure}
	studied the Selberg class $\mathbb{S}$ in the degree range $0<d_F<2$ and
	obtained a complete description of the functions that occur in this range.
	They showed that the possible functions are the Riemann zeta function
	$\zeta(s)$, the Dirichlet $L$-functions $L(s,\chi)$ associated with
	primitive characters $\chi$, and their imaginary shifts
	$L(s+i\theta,\chi)$, where $\theta\in\mathbb{R}$.  Also note that for non-primitive character $\chi \pmod q$  with  $q \neq 1$, $L(s,\chi)\notin \mathbb{S}$.
	Recently, in \cite{kac2022,kac2024}, they classified the $L$-functions of degree $2$ and conductor $1$. They showed that every $F \in \mathbb{S}$ of degree $2$ and conductor $1$ is an automorphic $L$-function. The known examples of degree $2$ functions include products of two degree $1$ functions, such as $\zeta^2(s)$ and $L(s,\chi_1)L(s,\chi_2)$; normalized  $L$-functions associated with holomorphic newforms; the Dedekind zeta-function $\zeta_K(s)$ associated with a quadratic number field $K$; and the imaginary shifts of any of the above functions which are entire.  Normalized $L$-functions associated with non-holomorphic newforms are expected to lie in $\mathbb{S}$, but the Ramanujan hypothesis has not yet been verified, the best current upper bound for the Fourier coefficients is $n^{7/64+\epsilon}$ due to Kim and Sarnak \cite{kim2003refined}. Assuming the Ramanujan hypothesis holds, we believe these $L$-functions belong to $\mathbb{S}$. In these cases, infinitely many zeros are known to be on the critical line. However, for degree $2$, we do not yet know whether the known list contains all possible cases.
	Thus, the problem is still open in general. Under some suitable assumptions, Mukhopadhyay, Srinivas, and Rajkumar
	\cite{Mukhopadhyay2008} proved that degree-two functions in the Selberg class have infinitely many zeros on the critical line.  Their result applies,
	in particular, to the $L$-functions $L_f(s)$ attached to cuspidal modular
	forms $f$ of level $\mathcal{N}$ when $\mathcal{N}$ is not a perfect
	square, and also to the products $L(s,\chi_1)L(s,\chi_2)$ with primitive Dirichlet characters  $\chi_1 \pmod { q_1}$ and  $\chi_2 \pmod {q_2}$, 
	whenever $q_1q_2$ is not a perfect square.

	The functional equation \eqref{functional equation} can be written as 
	\begin{equation}\label{fun eq F=SF bar}
		F(s)=H(s)\overline{F}(1-\bar{s}),
	\end{equation}
	where
	\begin{equation}\label{Hs}
		H(s):=\omega Q^{1-2s}\prod_{i=1}^{m} \frac{\Gamma(\alpha_i (1-s)+\bar{\beta_i})}{\Gamma(\alpha_i s +\beta_i)}.
	\end{equation}
	This allows us to define the analogue of Hardy's function for $F \in \mathbb{S}$ as 
	\begin{equation}\label{deine hardy's ZF}
		Z_F(t):=F(1/2+it) H(1/2+it)^{-1/2}.
	\end{equation}
Using \eqref{fun eq F=SF bar}, we have
$\left|H(1/2+it)\right|=1$, and hence
$Z_F(t)=\overline{Z_F(t)}$.
Thus, $Z_F(t)$ is real valued for real $t$, and its zeros correspond exactly to the zeros of $F(1/2+it)$.

	Let us define
	\begin{align*}
		Z_F^+(t) & : = \max (0,Z_F(t)),\\
		Z_F^-(t) &: = \max (0,-Z_F(t)).
	\end{align*}
	%%%%%%%%%%%%%%  need more 
	The second author \cite{mahatab2019large} established $\Omega_{\pm}$ results for Hardy's function $Z(t)$ corresponding to the Riemann zeta function $\zeta(s)$ using the resonator constructed by Bondarenko and Seip \cite{Bondarenko2017}. Specifically, it was shown that,
	\begin{align*}
		\max_{T^{3/4}\leq t\leq T} \pm Z(t) &\gg \exp\left(C \sqrt{ \frac{{\log T \log \log \log T}}{{\log \log T}}  } \right),
	\end{align*}
	holds with the constant $C = 1/2 - \epsilon$ for every $\epsilon > 0$.
	
	It was also remarked in the paper that the interval $[T^{3/4},T]$ for $t$
	can be improved to $[T,2T]$ using Soundararajan's resonance method
	\cite{soundararajan2007extreme}, but at the cost of a weaker result.
	More precisely, the bound
	\begin{align}\label{Mahatab results}
		\max_{T\leq t\leq 2T}\pm Z(t)
		&\gg
		\exp\left(
		C\sqrt{\frac{\log T}{\log\log T}}
		\right),
	\end{align}
	holds with the constant $C=1/2-\epsilon$ for every $\epsilon>0$.
	In a related direction, Gonek and Ivi\'c~\cite{GonekIvic} and
	Mawia~\cite{Mawia} studied the interval of distribution of positive and negative
	values of Hardy's function. Further large-value results using the
	resonance method were obtained by Aistleitner and
	Pa\'nkowski~\cite{aistleitner2017large} for $L$-functions in the Selberg
	class, and by Peyrot~\cite{Peyrot2020} for Hecke--Maass $L$-functions
	on the critical line with prescribed argument.

	In this paper, we extend the above $\Omega_{\pm}$ result for Hardy's
	function to $L$-functions of degree $1$ and certain $L$-functions of
	degree $2$ using the resonance method. Here and throughout, the notation
	$Z_F^{\pm}(t)$ means that the corresponding estimate holds separately
	for both $Z_F^{+}(t)$ and $Z_F^{-}(t)$. We now state the results of this
	paper.
	
	\begin{thm}\label{thm:S1}
		Let $F(s)$ be an element of the Selberg class $\mathbb{S}$ of degree $1$.
		Then, for sufficiently large $T$,
		\[
		\max_{T\leq t\leq 2T} Z_F^{\pm}(t)
		\gg
		\exp\left(
		\left(\frac{1}{2}-\epsilon\right)
		\sqrt{\frac{\log T}{\log\log T}}
		\right),
		\]
		where $\epsilon>0$ is arbitrarily small.
	\end{thm}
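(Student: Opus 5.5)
By the Kaczorowski--Perelli classification recalled above, a degree-$1$ element of $\mathbb{S}$ is $F(s)=L(s+i\theta,\chi)$, with $\chi$ a primitive character modulo $q$ (this includes $\zeta$ when $q=1$) and $\theta\in\mathbb{R}$. Shifting $t$ by $\theta$ moves $[T,2T]$ by only $O(1)$, so I may take $F=L(s,\chi)$. I will run Soundararajan's resonance method, adapted to $\Omega_\pm$ as in \cite{mahatab2019large}.

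Set $N=T^{1/2-\epsilon}$ and $L=\sqrt{\log N\log\log N}$. Define a multiplicative $f$ supported on square-free $n$ with
\[
f(p)=\frac{\overline{\chi(p)}\,L}{\sqrt p\log p}\quad\text{for } L^2\le p\le\exp((\log L)^2),
\qquad f(p)=0 \text{ otherwise}.
\]
Take the resonator $R(t)=\sum_{n\le N}f(n)n^{-it}$ and a smooth weight $\Phi$ supported in $[1,2]$. The key identity is
\[
2Z_F^{\pm}(t)=|Z_F(t)|\pm Z_F(t)=|F(1/2+it)|\pm F(1/2+it)H(1/2+it)^{-1/2}.
\]
I would compare
\[
M_1=\int |R(t)|^2\Phi(t/T)\,dt,\qquad
M_2^{\pm}=\int \bigl(|Z_F(t)|\pm Z_F(t)\bigr)|R(t)|^2\Phi(t/T)\,dt .
\]
Then $\max_{[T,2T]}Z_F^\pm\ge M_2^\pm/(2M_1)$. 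The main term of $M_2^\pm$ should come from $\int |F||R|^2\Phi$, and the oscillating piece $\pm\int Z_F|R|^2\Phi$ must be shown to be smaller.

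\textbf{Step 1 (the $|F|$ term).} Since $|F|\ge \Re\bigl(F(1/2+it)\bigr)$, I bound $\int|F||R|^2\Phi\ge \Re\int F(1/2+it)|R|^2\Phi$. I use the approximate functional equation, or simply the Dirichlet polynomial $\sum_{k\le T}\chi(k)k^{-1/2-it}$ plus an error that is acceptable by convexity. The diagonal terms $km=n$ give
\[
T\hat\Phi(0)\sum_{m,n\le N}\frac{f(m)\overline{f(n)}\chi(k)}{\sqrt k}.
\]
The off-diagonal terms are negligible because $N^2 k\ll T^{1-\epsilon}$. Since $|\chi(p)|^2=1$ for $p\nmid q$, the choice of $f$ yields, exactly as in Soundararajan's computation,
\[
\frac{\text{diagonal}}{T\hat\Phi(0)\sum|f(n)|^2}\ge \exp\Bigl((1+o(1))\frac{L}{\log L}\Bigr)=\exp\Bigl(\bigl(\tfrac12-\epsilon\bigr)\sqrt{\tfrac{\log T}{\log\log T}}\Bigr)
\]
(up to the $\epsilon$ rescaling), while $M_1\sim T\hat\Phi(0)\sum|f(n)|^2$. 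The positivity of the coefficients $f(p)\chi(p)/\sqrt p\ge0$ keeps the diagonal real and positive.

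\textbf{Step 2 (the $Z_F$ term, the main obstacle).} I need $\int Z_F(t)|R(t)|^2\Phi(t/T)\,dt=o$ of the main term. I would write
\[
Z_F=F\,H^{-1/2}=\tfrac12\bigl(FH^{-1/2}+\overline{F}H^{1/2}\bigr).
\]
Stirling gives
\[
H(1/2+it)^{-1/2}=\omega' \exp\Bigl(i\Bigl(t\log\tfrac{qt}{2\pi e}+c\Bigr)\Bigr)\bigl(1+O(1/t)\bigr),
\]
so each piece is $\sum_k\chi(k)k^{-1/2}\int e^{i(t\log(qt/2\pi e k)+\dots)}(m/n)^{it}\Phi(t/T)\,dt$. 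The phase has derivative $\log\bigl(qtm/(2\pi k n)\bigr)$, and a stationary point $t=2\pi kn/(qm)$ requires $k\asymp T m/n\ge T/N$. Bounding this by the stationary phase (second derivative) lemma gives each stationary contribution $\ll \sqrt T\,k^{-1/2}\ll \sqrt N$. Summing over $m,n$ with $|f|$ gives $\ll\sqrt N\bigl(\sum_{n\le N}|f(n)|\bigr)^2\ll N^{3/2}\sum|f|^2$, which is not directly small. The efficient route instead is to use the approximate functional equation in the form $Z_F(t)=2\Re\bigl(H^{-1/2}\sum_{k\le\sqrt{qt/2\pi}}\chi(k)k^{-1/2-it}\bigr)+O(t^{-1/4})$. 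For $k\le\sqrt T$ and $m,n\le N$, the phase derivative is $\gg\log T$ away from stationarity since $k n/m\le T^{1-\epsilon}\ll t$. Repeated integration by parts with the smooth $\Phi$ then makes every term $O(T^{-A})$. The error $O(t^{-1/4})$ contributes $\ll T^{-1/4}M_1$.

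\textbf{Conclusion.} Combining the two steps, $M_2^\pm\ge(1+o(1))\,\exp\bigl((\tfrac12-\epsilon)\sqrt{\log T/\log\log T}\bigr)M_1$, which gives the bound in Theorem~\ref{thm:S1}. The delicate point is Step 2: checking uniformly that no stationary phase occurs in the range $k\le\sqrt{qT}$, $m,n\le N=T^{1/2-\epsilon}$ (this is exactly why $N$ must stay below $T^{1/2}$ and why the constant is $1/2$), and handling the smooth cutoff in the approximate functional equation for $Z_F$.
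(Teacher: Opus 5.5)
Your Step 2 has a genuine gap: it rests on the wrong Stirling phase. For a degree-$1$ function, $H(1/2+it)^{-1/2}$ has phase $\frac t2\log\frac{qt}{2\pi e}$, not $t\log\frac{qt}{2\pi e}$. The latter is the degree-$2$ phase, and the former is the phase in the very approximate functional equation for $Z(t,\chi)$ that you quote.

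Pair the $k$-th term of $2\Re\bigl(H^{-1/2}\sum_{k\le\sqrt{qt/2\pi}}\chi(k)k^{-1/2-it}\bigr)$ with $(n/m)^{it}$ from $|R(t)|^2$. The two conjugate pieces then have phase derivatives $\log\bigl(n\sqrt{qt/2\pi}/(km)\bigr)$ and $\log\bigl(kn/(m\sqrt{qt/2\pi})\bigr)$. So the relevant comparison is between $kn/m$ and $\sqrt t$, not $t$, and the inequality $kn/m\le T^{1-\epsilon}\ll t$ is beside the point.

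For every pair $m\neq n$, one of the two pieces has a stationary point at $k=\frac{\min(m,n)}{\max(m,n)}\sqrt{qt/2\pi}$, which lies inside the range $k\le\sqrt{qt/2\pi}$. There are $\asymp\frac{\min(m,n)}{\max(m,n)}\sqrt T$ such $k$, each contributing a term of size $\asymp\sqrt T\,k^{-1/2}$. Their sum over $k$ is a quadratic exponential sum with no cancellation available for free. So integration by parts cannot make these terms $O(T^{-A})$, and $\int Z_F|R|^2\Phi=o(\cdot)$ with $N=T^{1/2-\epsilon}$ is not established. Your first attempt, which did find stationary points, was closer to the truth.

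There is also a warning sign in the bookkeeping. If Step 2 held, your resonator would give the gain $\exp\bigl((1+o(1))\sqrt{\log N/\log\log N}\bigr)=\exp\bigl((\sqrt{1/2-\epsilon}+o(1))\sqrt{\log T/\log\log T}\bigr)$. That is a constant near $1/\sqrt2$, stronger than the theorem. Equating it with $(\frac12-\epsilon)$ is an arithmetic slip, and the exponent is $L/(2\log L)$, not $L/\log L$.

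The paper does not try to avoid the stationary points. It bounds each $t$-integral by the second-derivative test, using that the phase $\phi$ has $\phi''=1/(2t)\gg1/T$, and handles the weight $\Phi(t/T)$ by one integration by parts; this gives $\ll\sqrt T$. Summing $k^{-1/2}$ over $k\le\sqrt{qT/\pi}$ gives $T^{1/4}$. With $\sum_{m,n\le N}|r(m)r(n)|\le N\sum_{n\le N}|r(n)|^2$, this yields $\int Z|R|^2\Phi\ll NT^{3/4}\sum|r(n)|^2$. That is $O(T\sum|r(n)|^2)$, hence negligible against the resonance gain, exactly when $N\le T^{1/4}$. Taking $N=T^{1/4}$ gives $\sqrt{\log N/\log\log N}\sim\frac12\sqrt{\log T/\log\log T}$, which is the true source of the constant $\frac12$.

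With $N=T^{1/4}$, your Step 1 and final comparison go through after two small repairs. First, with $R(t)=\sum f(n)n^{-it}$ the diagonal is $\sum_{km=n}\chi(k)k^{-1/2}|f(m)|^2\overline{f(k)}$. You therefore need $f(p)=\chi(p)L/(\sqrt p\log p)$, as in the paper's $r(n)=\chi(n)f(n)$; with $\overline{\chi(p)}$ you get $\chi(k)^2$, which is not positive for complex $\chi$. Second, the off-diagonal terms are negligible because $|\log(km/n)|\gg1/N$ whenever $km\neq n$ and $\widehat\Phi$ decays rapidly, not because $N^2k\ll T^{1-\epsilon}$, which fails for $k$ near $T$.
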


	Before stating our second theorem, we introduce the following hypothesis
	for degree-two functions.
	
	\begin{hyp}\label{hyp}
		Let $F(s) \in \mathbb{S}_{\text{poly}}$  have degree $2$ and  $a_F(n)$ be the Dirichlet  coefficients of $F$. There exists $\gamma < 1$ and $\gamma_1 \in \mathbb{R}$ such  that
		\begin{equation*}
			\sum_{n \leq T} a_F(n) e^{2\pi i n x} \ll T^{\gamma} (\log T)^{\gamma_1},
		\end{equation*}
		uniformly for all large $T$ and $x \in \mathbb{R}$, with  implied constant depends  only on $F$. Here the values of $\gamma$ and $\gamma_1$ depend on $F(s)$.
	\end{hyp}

	\begin{thm}\label{thm general}
		Let $F (s) =\sum_{n=1}^{\infty}a_F(n)n^{-s}\in\mathbb{S}_{\mathrm{poly}}$ be a function of degree $2$
		satisfying Hypothesis~\ref{hyp}, with $ \Im \xi_F =0$, where $\xi_F $ is the $\xi$-invariant defined in \eqref{zai invariant}. Moreover, assume that $F(s)$ satisfies the following Selberg normality conjecture,
		\be\label{normality conjectures}
		\sum_{p \leq y} |a_F(p)|^2 = (\kappa + o(1)) \frac{y}{\log y},\qquad (\kappa >0).
		\ee
		Then for sufficiently large $T$, 
		\[
		\max_{T\leq t\leq 2T} Z_F^{\pm}(t)
		\gg
		\exp\left(C_F
		\sqrt{\frac{\log T}{\log\log T}}
		\right),
		\]
		where the constant $C_F$ depends on $F(s)$. More precisely, for arbitrarily small $\epsilon>0$, we have 
		\begin{equation}\label{value of CF}
			C_F= \sqrt{\kappa(1-\gamma)}-\epsilon.
		\end{equation}
	\end{thm}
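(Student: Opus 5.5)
The plan is to follow Soundararajan's resonance method in the form used for $Z(t)$ in \cite{mahatab2019large}, but to estimate the moments of $Z_F^\pm$ through the moments of $Z_F$ and of $|Z_F|$ separately. Write $Z_F^{\pm}=\tfrac12(|Z_F|\pm Z_F)$. Let $R(t)=\sum_{m\le M}r(m)m^{-it}$ with $M=T^{\theta}$, and fix a smooth weight $\Phi$ supported in $[1,2]$. Put
\[
M_1=\int \Phi(t/T)\,|R(t)|^2\,dt,\qquad
M_2=\int \Phi(t/T)\,Z_F(t)\,|R(t)|^2\,dt,\qquad
M_3=\int \Phi(t/T)\,|F(\tfrac12+it)|\,|R(t)|^2\,dt .
\]
Then
\[
\max_{T\le t\le 2T} Z_F^{\pm}(t)\ \ge\ \frac{M_3\pm M_2}{2M_1}.
\]
So it suffices to show $|M_2|=o(M_3)$ and $M_3/M_1\gg \exp\big(C_F\sqrt{\log T/\log\log T}\big)$.

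\textbf{Lower bound for $M_3$.} I will compare with $|\int \Phi(t/T)F(\tfrac12+it)|R(t)|^2\,dt|$. This is at most $\max|F|^{?}$ times something, but more simply it is bounded by $M_3$ directly via $|\int F|R|^2|\le\int|F||R|^2$. So
\[
M_3\ \ge\ \Big|\int \Phi(t/T)\,F(\tfrac12+it)\,|R(t)|^2\,dt\Big|.
\]
Approximate $F(\tfrac12+it)$ by $\sum_{n\le T^{d/2+\epsilon}}a_F(n)n^{-1/2-it}$ plus a dual sum weighted by $H(\tfrac12+it)$. The diagonal term gives
\[
T\sum_{mk=n\le M}\frac{a_F(k)\,r(m)\overline{r(n)}}{\sqrt{k}}.
\]
I choose $r$ multiplicative, supported on squarefree integers. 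Following Soundararajan, take $r(p)=\overline{a_F(p)}\,|a_F(p)|^{-1}\,\frac{L}{\sqrt p\log p}$ on primes $L^2\le p\le \exp((\log L)^2)$, with $L=\sqrt{\theta\log M\log\log M}$ suitably scaled. The normality conjecture \eqref{normality conjectures}, or Mertens' theorem for $\chi$ when $d=1$, then makes this ratio at least $\exp\big((1+o(1))\sqrt{\kappa\,\theta\log T/\log\log T}\big)$. This holds with $\kappa=1$ in degree $1$, and by the optimal choice of $r$ it gives exponent $\sqrt{\kappa\theta}$.

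\textbf{Dual terms and $M_2$.} The dual sum carries $H(\tfrac12+it)$, whose phase oscillates like $t^{-idt}$. Integrated against $m^{-it}n^{it}$, the stationary phase lies at $t\asymp n/m\cdot(\text{const})$ and cannot hit $[T,2T]$ when $M$ is small, so it is negligible.

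For $M_2$ the factor $H^{-1/2}$ has phase $(d/2)\,t\log t$ roughly, and $\Im\xi_F=0$ ensures there is no extra $t^{i c}$ twist. In degree $1$, stationary phase for $\int\Phi(t/T)(mk/n)^{-it}H^{-1/2}$ requires $n/(mk)\approx(t/2\pi)^{1/2}\cdot q^{1/2}$. With $M\le T^{1/2-\epsilon}$, only terms with $k\approx\sqrt{T}$ contribute. Bounding these trivially via $\sum|r(m)r(n)|$ gives $M_2\ll T^{1/2+\epsilon}\cdots$, which is $o(M_3)$. This yields $\theta\to 1$ in the effective exponent, hence $1/2-\epsilon$ after the $\sqrt{\theta/2}$ bookkeeping, matching \eqref{Mahatab results}.

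\textbf{Degree $2$ and the main obstacle.} In degree $2$ the stationary-phase terms of $M_2$ produce sums $\sum_{k\sim T/ (mn)\cdots}a_F(k)e(k x)$ with $x$ depending on $m,n$. These are additively twisted sums of length about $T$. Hypothesis~\ref{hyp} bounds each by $T^{\gamma}(\log T)^{\gamma_1}$, after partial summation to remove $k^{-1/2}$ and the smooth phase. The total is then $\ll T^{\gamma}\,M\,(\max|r|)^2$, to be compared with $T\cdot M_1/T$. Forcing this to be negligible requires $M=T^{1-\gamma-\epsilon}$, so $\theta=1-\gamma$ and $C_F=\sqrt{\kappa(1-\gamma)}-\epsilon$ as in \eqref{value of CF}.

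I expect this reduction to be the main obstacle. It means carrying out the saddle-point analysis of $\int \Phi(t/T)H(\tfrac12+it)^{-1/2}(n/mk)^{it}\,dt$ for general gamma factors. This uses Stirling with $d_F=2$ and $\Im\xi_F=0$, so that the phase is exactly $t\log(t/e\cdot\text{const})$. It also means showing the resulting main term is precisely an exponential sum twisted by $e(kx)$ to which Hypothesis~\ref{hyp} applies uniformly in $x$.
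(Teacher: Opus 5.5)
Your architecture matches the paper's: a diagonal lower bound for the weighted moment of $F$, stationary phase reducing the signed moment to additive twists controlled by Hypothesis~\ref{hyp}, and resonator length $N=T^{\alpha}$ (your $M$) with $\alpha<1-\gamma$. Two steps, however, do not go through as written.

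\textbf{The resonator does not give the constant $\sqrt{\kappa(1-\gamma)}$.} The diagonal term is $\sum_{(m,k)=1}|r(m)|^2a_F(k)\overline{r(k)}k^{-1/2}$, so positivity needs $a_F(p)\overline{r(p)}\ge 0$.
\begin{itemize}
\item With $r(p)=\overline{a_F(p)}|a_F(p)|^{-1}L/(\sqrt p\log p)$ you get $a_F(p)\overline{r(p)}=a_F(p)^2|a_F(p)|^{-1}L/(\sqrt p\log p)$. This is not real once $a_F(p)\notin\mathbb{R}$, because the conjugate is on the wrong side.
\item More importantly, even after that is fixed, making $|r(p)|$ independent of $|a_F(p)|$ is suboptimal. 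The Rankin truncation then forces $L^2\sum_{L^2\le p\le \exp((\log L)^2)}(p\log p)^{-1}\approx\log N$, with no $\kappa$ appearing. The gain is $L\sum_p|a_F(p)|(p\log p)^{-1}$, so the exponent comes out as $c\sqrt{\log N/\log\log N}$, where $c$ is the prime average of $|a_F(p)|$.
\item Nothing in \eqref{normality conjectures} determines $c$. Cauchy--Schwarz gives $c\le\sqrt\kappa$, with strict inequality unless $|a_F(p)|$ is essentially constant. For a non-CM holomorphic newform, Sato--Tate gives $c=8/(3\pi)<1=\sqrt\kappa$.
\end{itemize}
The optimal trade-off between gain and Rankin cost is $|r(p)|\propto|a_F(p)|/(\sqrt p\log p)$. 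This is the paper's choice $r(n)=a_F(n)f(n)$ with $L^2=(1-\eta)\log N\log\log N/\kappa$. Then:
\begin{itemize}
\item $a_F(k)\overline{r(k)}=|a_F(k)|^2f(k)\ge0$;
\item the Rankin sum is $\sim\kappa L^2/(2\log L)$;
\item the gain is $\sim\kappa L/(2\log L)=(\sqrt{\kappa(1-\eta)}+o(1))\sqrt{\log N/\log\log N}$;
\item the second-order terms are controlled by the boundedness of $a_F(p)$ in $\mathbb{S}_{\mathrm{poly}}$, via \eqref{4th moment}, as in Lemma~\ref{lemma >=exp()}.
\end{itemize}

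\textbf{The bound for $M_2$ is only outlined, and the missing step is how $Z_F$ is expanded in degree $2$.} The paper explicitly avoids the approximate-functional-equation/Voronoi route in degree $2$ because of its large error terms. Your target is correct.
\begin{itemize}
\item Stationary points at $t\asymp kn/m$ produce $\sum_{k\asymp Q_oTm/n}a_F(k)e(-kn/(m\sqrt{q_F}))$, which Hypothesis~\ref{hyp} bounds by $(Tm/n)^{\gamma}(\log T)^{\gamma_1}$.
\item After the factor $(n/m)^{1/2}$ you need \eqref{cauchy}, namely $\sum_{n\le m\le N}|r(m)r(n)|(m/n)^{\gamma-1/2}\ll N\sum_{n\le N}|r(n)|^2$. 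This is where $\gamma<1$ enters, and your ``$M(\max|r|)^2$'' should read this. It gives $\alpha<1-\gamma$.
\end{itemize}
To actually reach those sums you have two options.
\begin{itemize}
\item Use an exact smoothed approximate functional equation. This is legitimate because Hypothesis~\ref{hyp} at $x=0$ forces $F$ to be entire. Its $t$-inert weights, together with your $\Phi(t/T)$, make all non-stationary $k$ negligible.
\item Or follow the paper. Write $\int_T^{2T}Z_F(t)(m/n)^{it}\,dt=-i(n/m)^{1/2}\int_{1/2+iT}^{1/2+2iT}F(s)H^{-1/2}(s)(m/n)^{s}\,ds$ and move the contour to $\Re s=1+\delta$. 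There $F$ is an absolutely convergent Dirichlet series, and the horizontal sides cost $\ll T^{1/2+\delta}(m/n)^{1+\delta}$. Then treat each $k$ by the first-derivative test off $[Q_oTm/n,2Q_oTm/n]$ and by the explicit stationary-phase Lemma~\ref{lemma three deriv expon} on it.
\end{itemize}
In the paper's route, the secondary terms $T^{1/2+\delta}N^{1+\delta}\sum|r(n)|^2$ require $\alpha<1/2$. This is automatic, since \eqref{normality conjectures} and Hypothesis~\ref{hyp} together force $\gamma\ge 1/2$.

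Your final inequality $\max Z_F^{\pm}\ge(M_3\pm M_2)/(2M_1)$ is correct and equivalent to the paper's last step. Keeping the smooth weight in $M_2$ even spares you the endpoint terms that the paper's sharp cutoff creates.
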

	
\begin{rem}
	The condition $\Im \xi_F=0$ is imposed to avoid the additional factor
	$t^{-i\Im \xi_F}$ in \eqref{New form of Hs d}, whose presence makes the direct
	application of Lemma~\ref{lemma three deriv expon} to
	\eqref{main integral of I} more complicated. This condition does not restrict
	the result when $F(s)$ is regular at $s=1$. Indeed, by considering the shift
	$G(s)=F(s+i\tau)$ with $\tau=-{\Im \xi_F}/{d_F}$, we can achieve
	the condition $\Im\xi_G=0$. With a compatible choice of the square root,
	$
	Z_G(t)=Z_F(t+\tau).
	$
	Therefore, applying the same argument to $G$ on the translated interval
	$[T-\tau,2T-\tau]$ yields the conclusion of the above theorem for the original
	function $F$ on $[T,2T]$.
\end{rem}

	\begin{rem}
		Parseval's identity and \eqref{normality conjectures} give
		\[
		\int_0^1 \left|\sum_{n\leq Y} a_F(n)e^{2\pi i nx}\right|^2\,dx
		=
		\sum_{n\leq Y}|a_F(n)|^2
		\geq
		\sum_{p\leq Y}|a_F(p)|^2
		\sim
		\kappa\frac{Y}{\log Y}.
		\]
		On the other hand, Hypothesis~\ref{hyp} implies
		\[
		\int_0^1 \left|\sum_{n\leq Y} a_F(n)e^{2\pi i nx}\right|^2\,dx
		\ll
		Y^{2\gamma}(\log Y)^{2\gamma_1}.
		\]
		Comparing these two estimates, we must have $\gamma\geq 1/2$. Therefore, under Hypothesis~\ref{hyp}, it follows that
		$
		1/2\leq\gamma<1.
		$
	\end{rem}

	%\color{blue}
	As a special case of Theorem~\ref{thm general}, we get the following theorems and give a sketch of their proofs, as the arguments are similar.
	
	%\normalcolor
	
	\begin{thm}\label{thm:S2}
		Let $F(s)$ be one of the following functions:
		\begin{enumerate}
			\renewcommand{\labelenumi}{(\roman{enumi})}
			\item the $L$-function associated with a normalized primitive holomorphic Hecke cusp eigenform of even weight $k\geq 2$, level $\mathcal{N}$ and nebentypus $\chi$.
			\item  assuming the Ramanujan hypothesis,  the $L$-function associated with a normalized primitive Hecke-Maass cusp form of weight $0$, level  $\mathcal{N}$  and nebentypus $\chi$, with Laplace eigenvalue $1/4+r^2$ and $r\in \mathbb{R}$.
			
		\end{enumerate}
		Then for sufficiently large $T$,
		\[
		\max_{T\leq t\leq 2T} Z_F^{\pm}(t)
		\gg
		\exp\left(C_F
		\sqrt{\frac{\log T}{\log\log T}}
		\right),
		\]
		where the constant $C_F$ depends on $F(s)$, and can be computed using \eqref{value of CF}.
	\end{thm}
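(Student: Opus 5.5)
Theorem~\ref{thm:S2} will be deduced from Theorem~\ref{thm general}: for each family (i) and (ii) I will check that $F\in\mathbb{S}_{\mathrm{poly}}$ has degree $2$, that Hypothesis~\ref{hyp} holds with an explicit $\gamma<1$, that the normality relation \eqref{normality conjectures} holds with an explicit $\kappa>0$, and that $\Im\xi_F=0$ (or reduce to this case by the shift described in the remark following Theorem~\ref{thm general}).

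\emph{Membership, degree and $\xi$-invariant.} For a primitive newform, the normalized $L$-function $L_f(s)=\sum \lambda_f(n)n^{-s}$ has the Euler product $\prod_p(1-\lambda_f(p)p^{-s}+\chi(p)p^{-2s})^{-1}$, which is polynomial of length $r=2$. Deligne's bound (in case (ii), the assumed Ramanujan hypothesis) gives axiom (v). Entire continuation and the functional equation are given by Hecke theory for newforms, with $\Phi(s)=(\sqrt{\mathcal N}/2\pi)^s\Gamma(s+\tfrac{k-1}{2})L_f(s)$ in case (i). In case (ii) the gamma factor is $\pi^{-s}\Gamma(\tfrac{s+\epsilon+ir}{2})\Gamma(\tfrac{s+\epsilon-ir}{2})$ with $\epsilon\in\{0,1\}$, and $\omega$ is the root number. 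In both cases $d_F=2$. Computing $\xi_F$ gives $k-2$ in case (i) and $2\epsilon-1$ in case (ii); the imaginary parts $\pm ir$ cancel, so $\Im\xi_F=0$.

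\emph{Normality.} Rankin--Selberg theory gives $\sum_{p\le y}|\lambda_f(p)|^2\sim y/\log y$ for cuspidal $f$, since $L(s,f\times\bar f)$ has a simple pole at $s=1$ and a zero-free region there. This yields $\kappa=1$.

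\emph{Hypothesis~\ref{hyp}.} This is the main step: uniform bounds for additively twisted sums $\sum_{n\le T}\lambda_f(n)e(nx)$.
\begin{itemize}
\item For holomorphic cusp forms, Wilton's classical estimate and its extension to level $\mathcal N$ and nebentypus give $\ll T^{1/2}\log T$ uniformly in $x$. This is proved by approximating $x$ by a rational $a/q$ and using the Voronoi summation formula, or directly from the boundedness of $y^{k/2}|f(z)|$ on $\mathbb{H}$, which yields $T^{1/2}\log T$ after smoothing. Hence $\gamma=1/2$, $\gamma_1=1$.
\item For Maass forms, the analogous uniform bound $\ll T^{1/2+\epsilon}$ follows from boundedness of $\sqrt{y}\,|u(z)|$ (the form is cuspidal), via Epstein--Hafner--Sarnak and Iwaniec style arguments. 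Hence $\gamma=1/2+\epsilon$, or $\gamma=1/2$ with a log power.
\end{itemize}

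The obstacle is uniformity in $x$ at arbitrary level and nebentypus. I would handle it through the bound $\sup_z y^{k/2}|f(z)|<\infty$, valid for cusp forms at every cusp. Integrating $f(x+iy)$ against $e(-nx)$ over a period with $y=1/T$ gives the smoothed sum $\sum\lambda_f(n)n^{(k-1)/2}e^{-2\pi n/T}e(nx)\ll T^{k/2}$. Removing the smoothing and the weight $n^{(k-1)/2}$ by partial summation and a standard contour or Fourier argument then gives the sharp cutoff with a $\log T$ loss.

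Plugging in $\kappa=1$ and $\gamma=1/2$ yields $C_F=\sqrt{1/2}-\epsilon$. Up to the $\epsilon$-loss in (ii), the conclusion then follows from Theorem~\ref{thm general}.
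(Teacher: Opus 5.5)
Your proposal is correct and follows the paper's route. The paper's proof simply feeds $\gamma=1/2$ (the uniform Wilton-type bound of Lemma~\ref{lemma exponential sum}) and $\kappa=1$ (Lemma~\ref{normality cojecture for both Maass and modular}) into Theorem~\ref{thm general} to get $C_F=1/\sqrt{2}-\epsilon$; your checks of membership in $\mathbb{S}_{\mathrm{poly}}$ and of $\Im\xi_F=0$ make explicit what the paper leaves implicit, though with your parity $\epsilon\in\{0,1\}$ the case (ii) value is $\xi_F=2\epsilon-2$ rather than $2\epsilon-1$, which is harmless since only the vanishing imaginary part is used.
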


%	\begin{rem}
%		From Lemmas~\ref{lemma exponential sum} and \ref{normality cojecture for both Maass and modular}, we have $\gamma=1/2$ and $\kappa=1$. Hence, the constant $C_F$ in the above theorem is
%		$
%		{1}/{\sqrt{2}}-\epsilon,
%		$
%		where $\epsilon>0$ is an arbitrarily small number.
%	\end{rem}

	%%%%%%%%%%%%%

	The Ramanujan hypothesis in part~(ii) of Theorem~\ref{thm:S2}
	can be removed at the cost of a smaller constant. More precisely, we have the following theorem.
	
	\begin{thm}\label{thm:Maass-unconditional}
		Let $F(s)$ be the $L$-function associated with a normalized primitive
		Hecke--Maass cusp form of weight $0$, level $\mathcal{N}$ and
		nebentypus $\chi$, with Laplace eigenvalue $1/4+r^2$, where
		$r\in\mathbb{R}$. Then, unconditionally, for sufficiently large $T$,
		\[
		\max_{T\leq t\leq 2T} Z_F^{\pm}(t)
		\gg
		\exp\left(
		\left(\sqrt{\frac{25}{71}}-\epsilon\right)
		\sqrt{\frac{\log T}{\log\log T}}
		\right),
		\]
		where $\epsilon>0$ is arbitrarily small.
	\end{thm}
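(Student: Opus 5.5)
The plan is to rerun the proof of Theorem~\ref{thm general}, tracking every place where axiom (v) enters, and to replace the Ramanujan bound by the Kim--Sarnak bound $|a_F(n)|\le d(n)\,n^{\theta}$ with $\theta=7/64$. All the other inputs should be available unconditionally:
\begin{itemize}
\item the polynomial Euler product of degree $2$, with $|\nu_i(p)|\le p^{\theta}$;
\item the functional equation with $\Im\xi_F=0$, after the shift described in the Remark following Theorem~\ref{thm general};
\item the Selberg normality relation \eqref{normality conjectures} with $\kappa=1$. This follows from the prime number theorem for the Rankin--Selberg $L$-function $L(s,f\times\bar f)$, which does not need Ramanujan.
\end{itemize}
For the additive twist I would use the classical Wilton-type estimate $\sum_{n\le T}a_F(n)e(nx)\ll T^{1/2}\log T$ uniformly in $x$, which holds unconditionally for Maass forms (Epstein--Hafner--Sarnak). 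So in principle we have Hypothesis~\ref{hyp} with $\gamma=1/2$.

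The main work is to redo the resonance argument: compare $\int Z_F(t)|R(t)|^2\Phi(t/T)\,dt$ with $\int |R(t)|^2\Phi(t/T)\,dt$, and also the analogous integrals for $|Z_F|$. Here $R(t)=\sum_{m\le N} r(m)m^{-it}$ is Soundararajan's resonator, supported on squarefree $m$ composed of primes in a range $[L^2,\exp((\log L)^2)]$, with
\[
r(p)=\frac{L\,\overline{a_F(p)}}{\sqrt{p}\,\log p}\,\mathbf 1_{p\text{ in range}},\qquad L\asymp\sqrt{\log N\log\log N}.
\]
There are two points where Ramanujan was used.
\begin{enumerate}
\item The error term in the twisted integral, coming from the stationary-phase/third-derivative treatment (Lemma~\ref{lemma three deriv expon}) of $\sum_n a_F(n)n^{-1/2-it}H^{-1/2}$ against $m^{-it}\bar n'^{\,it}$. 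It is controlled through Hypothesis~\ref{hyp} and through bounds on $\sum_m|r(m)|$.
\item The Rankin-type bounds that compare $\sum |r(m)|$ and $\sum_{m}\sum_{p}r(m)\overline{r(mp)}a_F(p)p^{-1/2}$ with $\sum|r(m)|^2$.
\end{enumerate}

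For the main term I would keep the same choice of $r(p)$. The ratio main/normalization then becomes
\[
\exp\Bigl((1+o(1))\frac{L}{\log L}\sum_{p}\frac{|a_F(p)|^2}{p\log p}\Bigr)\cdot\frac{L\cdots}{\cdots},
\]
where only averages of $|a_F(p)|^2$ appear. By the normality relation with $\kappa=1$, this yields $\exp\bigl((1+o(1))\sqrt{\log N/\log\log N}\bigr)$ as before.

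The loss comes from the error terms. Using $|a_F(p)|\le 2p^{\theta}$ in $r(p)$ inflates $\max|r(m)|$ and $\sum_m |r(m)|$ by a factor of at most $N^{\theta}$. It also inflates the coefficient bound inside the approximate functional equation and the twisted-sum estimate by the same factor. I expect the admissible size of the resonator, which was $N=T^{1-\gamma-\epsilon}$ under Ramanujan, to become
\[
N^{1+\theta}\le T^{1/2-\theta-\epsilon},\qquad\text{i.e.}\qquad N=T^{\frac{1/2-\theta}{1+\theta}-\epsilon}.
\]
Taking $\theta=7/64$ gives $\log N/\log T\to 25/71$, and hence the constant $\sqrt{25/71}-\epsilon$.

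The sign issue is handled exactly as in Theorem~\ref{thm general}. We compare $\int Z_F|R|^2\Phi$ (which is large) with $\int |Z_F||R|^2\Phi$, and the latter is bounded via Cauchy--Schwarz and a mean-value estimate that does not need Ramanujan. This gives both $Z_F^+$ and $Z_F^-$.

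The main obstacle will be the careful bookkeeping in the off-diagonal error. This means pinning down precisely how the factor $n^{\theta}$ enters the application of Lemma~\ref{lemma three deriv expon} and the partial summation against the twisted sum. The goal is to show that the loss is exactly $T^{\theta}$ in the numerator and $N^{\theta}$ in the resonator normalization, so that the constraint takes the form $(1+\theta)\log N\le(1/2-\theta)\log T$. I would first try bounding the twisted sum trivially in the $n^{\theta}$-weighted range and using Wilton's bound only for the oscillation. If that loses more than claimed, I would instead use the Rankin--Selberg mean square $\sum_{n\le x}|a_F(n)|^2\ll x$ together with Cauchy--Schwarz, so that the pointwise bound is needed only for $r(m)$.
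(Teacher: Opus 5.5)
Your overall route matches the paper's: you rerun Theorem~\ref{thm general} with Kim--Sarnak in place of Ramanujan, take $\gamma=1/2$ from the unconditional Wilton bound and $\kappa=1$ from Rankin--Selberg. Your exponent $(1/2-\theta)/(1+\theta)=25/71$ is also the paper's. However, two steps fail as written.

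\textbf{The main term needs a fourth moment.} It is not true that only averages of $|a_F(p)|^2$ enter the main term; this is exactly where Ramanujan was hidden in Lemma~\ref{lemma >=exp()}. To get $\log(P/D)\ge(1-o(1))\sum_p v_p$ you must control the loss $\sum_p v_pu_p/(1+u_p)$. This is essentially $\sum_p u_pv_p=L^3\sum_{L^2\le p\le M}|a_F(p)|^4/(p^2\log^3p)$. The pointwise bound $|a_F(p)|\le 2p^{7/64}$ only gives $\ll L^{1+8\theta}/(\log L)^4=L^{15/8}/(\log L)^4$, which swamps the main term $(1+o(1))L/(2\log L)$. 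Second-moment information alone cannot rescue this, for instance by truncating the resonator to primes with bounded $|a_F(p)|$. It does not exclude that a sparse set of primes with large $|a_F(p)|$ carries a positive proportion of $\sum_p|a_F(p)|^2/(p\log p)$. On those primes $u_p\gg1$, so they contribute almost nothing to $\log(P/D)$. The missing ingredient is the unconditional bound $\sum_{p\le x}|a_F(p)|^4\ll x/\log x$, which is hypothesis \eqref{4th moment}. The paper proves it in Lemma~\ref{lemma 4th moment} via the adjoint lift to $\mathrm{GL}_3$ and invokes it explicitly in this proof.

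\textbf{The sign argument is inverted.} A large signed moment $\int Z_F|R|^2\Phi$ combined with an upper bound for $\int|Z_F||R|^2\Phi$ says nothing about $Z_F^-$. Moreover, $\int Z_F|R|^2\Phi$ has no diagonal main term, since $H^{-1/2}(\tfrac12+it)$ oscillates like $e^{it\log(Q_ot/e)}$. The paper's mechanism runs the other way:
\begin{itemize}
\item $\int F(\tfrac12+it)|R(t)|^2\Phi(t/T)\,dt$ is large (Lemma~\ref{lemma IF lower for F}). Since $|Z_F(t)|=|F(\tfrac12+it)|$, this bounds $\int_T^{2T}|Z_F||R|^2$ from below.
\item The signed moment $J_F=\int_T^{2T}Z_F|R|^2$ is $o\bigl(T\sum_{n\le N}|r(n)|^2\bigr)$.
\item Then $2\int_T^{2T}Z_F^{\pm}|R|^2=\int_T^{2T}|Z_F||R|^2\pm J_F$ gives both signs.
\end{itemize}
Your item (1) is precisely the proof that $J_F$ is small: the twisted integral treated by contour shift, Lemma~\ref{lemma three deriv expon} and the Wilton bound. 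So the pieces are present, but the conclusion must be organized as above, and no Cauchy--Schwarz upper bound on $\int|Z_F||R|^2$ plays any role.

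\textbf{Where $\theta$ enters.} For your bookkeeping: the $\theta$-loss does not come from inflating $\max|r(m)|$ or $\sum_m|r(m)|$; everything is reduced to $\sum|r(n)|^2$. It comes from $|a_F(k)|\ll k^{\theta+\epsilon}$ at $k\asymp Tm/n$, in the excluded integers and in the near-stationary parts of $S_1,S_2,S_3$. This produces errors $T^{1/2+\theta+\epsilon}(m/n)^{1/2+\theta+\epsilon}$. Via \eqref{cauchy} these become $T^{1/2+\theta+\epsilon}N^{1+\theta+\epsilon}\sum_{n\le N}|r(n)|^2$, which yields the constraint $\alpha<(1/2-\theta)/(1+\theta)$.
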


%	\begin{rem}
%		The proof of Theorem~\ref{thm:Maass-unconditional} follows by modifying
%		the proof of Theorem~\ref{thm general} using the bound
%		\[
%		a_F(n) \ll n^{\theta+\epsilon},
%		\qquad
%		\theta=\frac{7}{64},
%		\]
%		instead of the Ramanujan hypothesis. The required modifications are
%		given in Subsection~\ref{subsec:unconditional-Maass}.
%	\end{rem}
	%

	%%%%%%%%%%%%%%

	It is well known that the Dedekind zeta function
	$\zeta_K(s)=\sum_{n=1}^{\infty}a_K(n)n^{-s}$
	attached to a quadratic field $K$ has degree $2$. However, it does not satisfy Hypothesis~\ref{hyp} uniformly for all $x\in\mathbb{R}$, since for $x=0$,
	$$
	\sum_{n\leq T} a_K(n)\sim r_K T,
	$$
	where $r_K$ denotes the residue of $\zeta_K(s)$ at $s=1$. Therefore, the corresponding omega result for $\zeta_K(s)$ does not follow as a special case of Theorem~\ref{thm general}.
	
	We also consider the degree $2$ function $F(s)=L(s,\chi_1)L(s,\chi_2)$, where $\chi_1$ and $\chi_2$ are primitive Dirichlet characters modulo $q_1$ and $q_2$, respectively, with $q_1q_2$ not a perfect square. For this family also, the uniform exponential sum estimate required in our general argument is not directly available in the required form. Therefore, we treat these two families separately in the following theorem.
	
	\begin{thm}\label{thm for dedekind zeta function}
		Let $F$ be one of the following functions:
		\begin{itemize}
			\item[(i)] $F(s)=\zeta_K(s)$, where $K$ is any quadratic field;
			
			\item[(ii)] $F(s)=L(s,\chi_1)L(s,\chi_2)$, where $\chi_1$ and $\chi_2$ are primitive Dirichlet characters modulo $q_1$ and $q_2$, respectively, with $q_1q_2$ not a perfect square.
		\end{itemize}
		Then for sufficiently large $T$, 
		\[
		\max_{T\leq t\leq 2T} Z_F^{\pm}(t)
		\gg
		\exp\left(
		\left(\frac{1}{\sqrt2}-\epsilon\right)
		\sqrt{\frac{\log T}{\log\log T}}
		\right),
		\]
		where $\epsilon >0$ is an arbitrarily small number.
	\end{thm}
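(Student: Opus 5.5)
We follow the resonance scheme behind Theorem~\ref{thm general}, changing only the step where Hypothesis~\ref{hyp} is used. Fix a smooth weight $\Phi$ supported in $[1,2]$ and a resonator $R(t)=\sum_{m\le N} r(m)m^{-it}$ with $N=T^{\theta}$ for a suitable $\theta<1$. Following Soundararajan, take $r$ multiplicative and supported on squarefree $m$ whose prime factors lie in $[L^2,\exp((\log L)^2)]$, where $L=\sqrt{\log N\log\log N}$. For such primes set $r(p)=\frac{\lambda\,\overline{a_F(p)}}{\sqrt p\log p}$ with $\lambda\asymp L$, tuned to $\kappa$. We then compare
\[
M_1=\int Z_F(t)|R(t)|^2\Phi(t/T)\,dt,\qquad M_2=\int |R(t)|^2\Phi(t/T)\,dt,\qquad M_3=\int |Z_F(t)||R(t)|^2\Phi(t/T)\,dt .
\]
Since $2Z_F^{\pm}=|Z_F|\pm Z_F$, it suffices to show that $M_3\ge M_2\exp\bigl((c-\epsilon)\sqrt{\log T/\log\log T}\bigr)$ and that $|M_1|=o(M_3)$. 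Then $\max Z_F^{\pm}\ge (M_3\pm M_1)/(2M_2)$.

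For the lower bound on $M_3$, we use $M_3\ge\bigl|\int F(\tfrac12+it)|R|^2\Phi\bigr|$, insert an approximate functional equation for $F$, and take the diagonal terms. This gives $\sum_{mn\le N}a_F(n)n^{-1/2}r(m)\overline{r(mn)}$. For $\zeta_K$ the coefficients are $a_K(p)=1+\chi_D(p)$. Although $|a_K(p)|^2$ averages to $2$, restricting $r$ to primes with $\chi_D(p)=1$, or more simply choosing $r(p)$ proportional to $a_K(p)$, gives the normality constant $\kappa=1$ in the relevant sum $\sum_p |a(p)|^2/(p\log p)$ weighting. The same holds for $L(s,\chi_1)L(s,\chi_2)$, where $|\chi_1(p)+\chi_2(p)|^2$ has mean $2$ unless $\chi_1=\chi_2$; that case is excluded because $q_1q_2$ is not a square. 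Soundararajan's optimisation then produces the ratio $\exp\bigl((\sqrt{\kappa\theta}-\epsilon)\sqrt{\log T/\log\log T}\bigr)$. The target constant $1/\sqrt2$ corresponds to $\kappa\theta=1/2$, and we will check which combination of $\kappa$ and $\theta$ is achievable.

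The key step, and the main obstacle, is bounding $M_1$. Writing $H(\tfrac12+it)^{-1/2}$ via Stirling as $e^{i(t\log(t/(2\pi e))+\dots)}$, in degree $2$ with conductor $q$, the integral reduces after stationary phase (Lemma~\ref{lemma three deriv expon}) to sums of the form $\sum_{n}a_F(n)e(\pm 2\sqrt{nm_1/m_2}\cdot\,)$. More simply, it reduces to additive twists $\sum_{n\le Y}a_F(n)e(nx)$ with $x$ depending on $m_1/m_2$. Hypothesis~\ref{hyp} fails here only because of the polar main term at $x=0$, or at rationals with small denominator. For case~(i) we write $a_K=1*\chi_D$ and evaluate $\sum_{n\le Y}(1*\chi_D)(n)e(nx)$ by the hyperbola method. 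Away from rationals with small denominators this gives $Y^{1/2+\epsilon}$ (a Voronoi or Wilton-type estimate). Near $x=a/q$ it gives an explicit main term $\asymp Y/q$ times a Gauss-sum factor. Case~(ii) is handled the same way with $\chi_1*\chi_2$; here, since neither character is principal, there is no main term of size $Y$, and the Gauss sums are bounded by $\sqrt{q_1q_2}$.

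We then feed these bounds into the off-diagonal analysis. The bulk contributes $O(T^{1/2+\epsilon}N)$-type errors, which are $o(M_3)$ provided $\theta<1/2$. The main terms come only from the frequencies $x$ that are exactly rational with bounded denominator. These main terms carry the oscillating phase from $H^{-1/2}$, and we plan to show that the stationary point they produce lies outside $[T,2T]$, or that they are of lower order than the diagonal term of $M_3$ because they lose a factor $(\log T)^{-1}$-type relative saving. Balancing the allowable $\theta$ with $\kappa$ yields $C=1/\sqrt2-\epsilon$, matching $\sqrt{\kappa(1-\gamma)}$ with $\gamma=1/2$ and $\kappa=1$. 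The delicate point is verifying that the rational-point main terms really cancel or remain small; if they do not, we would remove them by restricting $r$ further, for example by coprimality to $D$ or to $q_1q_2$.
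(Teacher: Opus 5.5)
Your overall scheme is sound: the resonator, and the inequality $\max Z_F^{\pm}\ge (M_3\pm M_1)/(2M_2)$. The gap is in the step that carries the whole theorem, the bound for the off-diagonal additive twists. As you state it, that bound is false.

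\textbf{The twist bound.} The frequencies that arise are $x=n/(m\sqrt{q_F})$ with $m,n\le N$. Except for $K=\mathbb{Q}(i)$, all of them are irrational. Your dichotomy (main terms only at exactly rational $x$ with bounded denominator, and $Y^{1/2+\epsilon}$ elsewhere) would therefore predict no main terms at all. But for divisor-type coefficients $1*\chi_D$ or $\chi_1*\chi_2$, a twist $\sum_{k\le Y}a_F(k)e(kx)$ with $x$ near $a/q$ carries a main term of rough size $q^{-1}\min(Y,|x-a/q|^{-1})$.

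The only general information about $x$ is the Diophantine bound $|n/(m\sqrt{q_F})-a/q|\ge c/(mnq^2)$. This allows $x$ to lie within $\asymp 1/(mnq^2)$ of a rational with $q\asymp\sqrt{Y/(mn)}$. There the main term alone is of size about $\sqrt{mnY}$, far larger than $Y^{1/2+\epsilon}$ when $mn$ is a power of $T$. No uniform square-root bound exists here, which is exactly why Hypothesis~\ref{hyp} cannot be used.

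The missing idea is to exploit that $\sqrt{q_F}$ is a quadratic irrational. This gives the lower bound above, and the same bound for $n\sqrt{q_F}/m$, which you need after splitting into residue classes. Take $a/q$ from Dirichlet's theorem with $\mathcal{Q}=\lceil\sqrt{mnX}\rceil$; this forces $c\mathcal{Q}/(mn)\le q\le\mathcal{Q}$. The hyperbola split of $uv\le X$ together with Lemma~\ref{Vaughan lemma} then gives a bound $\ll\sqrt{mnX}\log(2mnX)$.

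With $X\asymp Tm/n$ this is $m\sqrt{T}\log T$. The off-diagonal part is then $\ll T^{1/2}N^2\log T\sum_{n\le N}|r(n)|^2$, which forces $N=T^{1/4}$, not $T^{\theta}$ with $\theta$ near $\frac12$.

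\textbf{The case $\mathbb{Q}(i)$.} Here the frequency $n/(2m)$ has denominator $q=2m/(m,n)$, which can be as large as $2N$. The main term $\asymp X/q$ is therefore huge, not of lower order. Neither a stationary point outside $[T,2T]$ nor a logarithmic saving removes it. It vanishes identically for a different reason: $f(2)=0$ forces $m,n$ odd, hence $q\equiv 2\pmod 4$, and then the quadratic Gauss sum $G(a,q)$ is $0$.

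\textbf{Constant bookkeeping.} Your constants contain a compensating error. The normality constant is $\kappa=2$, not $1$. Indeed, $|1+\chi_D(p)|^2=2+2\chi_D(p)$ has mean $2$. Restricting to primes with $\chi_D(p)=1$ gives the value $4$ on a set of density $\frac12$, which is still $2$. Likewise $|\chi_1(p)+\chi_2(p)|^2$ has mean $2$.

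The paper's constant is $\sqrt{\kappa\theta}=\sqrt{2\cdot\frac14}=1/\sqrt2$. Your pair $\kappa=1$, $\theta=\frac12$ is the cusp-form bookkeeping of Theorem~\ref{thm:S2}, and it lands on the same number only by accident. With the correct $\kappa=2$, a genuine $\theta\to\frac12$ would give the constant $1$, which signals that your bulk bound is too strong.

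\textbf{Minor points.} The diagonal term is $a_F(k)r(n)\overline{r(nk)}$, so you need $r(p)\propto a_F(p)$ rather than $\overline{a_F(p)}$; this matters in case (ii). Also, ``neither character is principal'' fails when $q_1=1$, e.g.\ $F=\zeta(s)L(s,\chi_2)$.
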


% \begin{rem}
% hh
% \end{rem}

\subsection{Strategy of the paper}

In this paper, we try to impliment the philosophy that if an $L$-function has infinitely many zeros on the critical line and produces large values there by means of a resonator, then, together with suitable cancellation in the corresponding signed resonated moment, one can obtain large positive and negative values of the associated Hardy function of comparable magnitude. This cancellation is essential, since large absolute values and infinitely many zeros on the critical line alone do not guarantee large values of both signs. Accordingly, we exclude cases such as $\zeta^2(s)$ and $L^2(s,\chi)$, for which the associated Hardy functions do not exhibit both positive and negative values.

To work on this idea, a natural approach is to use the second author's argument~\cite{mahatab2019large} for the Riemann zeta function, with some modifications. This idea works well for Dirichlet $L$-functions but fails for degree $2$ $L$-functions in the Selberg class due to the large error in the Voronoi summation formula. In particular, the proposition of~\cite{mahatab2019large} does not apply here directly. So, we modify it in our new Proposition~\ref{New prop R(t) phi} and show that the integral $\int_{T}^{2T} Z_F(t)\, |R(t)|^2 \, dt$ is small by making a shift in the contour. This idea is well known and has been used in \cite{chandrasekharan1968zeta,Meher2017pujahari,Mukhopadhyay2008} to show the existence of infinitely many zeros for degree $2$ $L$-functions. However, the computation in our case requires uniform exponential-sum estimates involving the Dirichlet coefficients $a_F(n)$ of the $L$-function (see \eqref{Final JF}). These estimates are readily available for $L$-functions associated with holomorphic cusp form and Maass cusp form. For the Dedekind zeta function of a quadratic field and for $L(s,\chi_1)L(s,\chi_2)$, we establish the required exponential-sum estimate in Section~\ref{proof for dedekind}.

%\color{blue}
The method is not restricted to functions in the Selberg class and can also be applied to certain $L$-functions outside this class, provided that the corresponding Hardy function and the required exponential-sum estimates are available. We discuss such extensions in Section~\ref{section beyond selberg}.

%\normalcolor

	\begin{rem}
		For $L$-functions of degree $3$ or higher,
		a major difficulty in extending our method
		is obtaining a sufficiently strong upper bound for
		$
		\int_T^{2T} Z_F(t)|R(t)|^2\,dt.
		$
		In particular, the techniques used in this paper
		do not appear to provide the required cancellation
		in this integral for higher-degree $L$-functions.
		Thus, extending our results to degrees greater
		than $2$ remains a challenging problem.
	\end{rem}

	\section{Preliminaries}\label{section preli}
	\subsection{ Asymptotic and order estimates}
	
	We know from the  Stirling formula for the $\Gamma$-function that in any fixed vertical strip $-\infty < a \leq \sigma \leq b < \infty$, we have as $t \to \infty$:
	\[
	\Gamma(\sigma + it) = \sqrt{2\pi} \, t^{\sigma - \frac{1}{2} + it} e^{-\frac{\pi}{2} t} e^{\frac{\pi}{2} i (\sigma - \frac{1}{2}) - it} \left(1 + O\!\left(\frac{1}{t}\right)\right).
	\]
	To apply this to the ratio of Gamma factors in \eqref{Hs}, let $s = \sigma + it$. For each $j = 1, \dots, m$, the ratio expands as:
	\begin{equation}\label{ratio of gamma}
		\begin{aligned}
			\frac{\Gamma(\alpha_j (1-s)+\bar{\beta_j})}{\Gamma(\alpha_j s +\beta_j)} &= (\alpha_j t + \Im\beta_j)^{\alpha_j(1-2\sigma) - 2i(\alpha_j t + \Im\beta_j)} e^{-\frac{\pi}{2}i(\alpha_j + 2\Re\beta_j - 1)} e^{2i(\alpha_j t + \Im\beta_j)} \left(1 + O\!\left(\frac{1}{t}\right)\right) \\
			&= (\alpha_j t)^{\alpha_j(1-2\sigma) - 2i(\alpha_j t + \Im\beta_j)} e^{-2i\Im\beta_j} e^{-\frac{\pi}{2}i(\alpha_j + 2\Re\beta_j - 1)} e^{2i\alpha_j t} e^{2i\Im\beta_j} \left(1 + O\!\left(\frac{1}{t}\right)\right) \\
			&= \alpha_j^{\alpha_j(1-2s) - 2i\Im\beta_j} t^{\alpha_j(1-2s)} t^{-2i\Im\beta_j} e^{-\frac{\pi}{2}i(\alpha_j + 2\Re\beta_j - 1)} e^{2i\alpha_j t} \left(1 + O\!\left(\frac{1}{t}\right)\right), 
		\end{aligned}
	\end{equation}
	
	after simplification and by using 
	\[
	\left(1 + \frac{\Im\beta_j}{\alpha_j t}\right)^{-2i\alpha_j t} = e^{-2i\Im\beta_j} \left(1 + O\!\left(\frac{1}{t}\right)\right). 
	\]
	Applying this to the full product for $H(s)$, we have as $t \to \infty$:
	\begin{align*}
		H(s) = & \, \omega Q^{1-2s} t^{\sum_{j=1}^{m}\alpha_j (1-2s)} t^{-2i \sum_{j=1}^{m} \Im{\beta_j}} e^{-\frac{\pi}{2}i\sum_{j=1}^{m} ( \alpha_j+ 2 \Re\beta_j-1)} \\
		& \times e^{2it\sum_{j=1}^{m} \alpha_j} \prod_{j=1}^{m} \alpha_j^{\alpha_j(1-2s)-2i\Im\beta_j} \left(1 + O\!\left(\frac{1}{t}\right)\right).
	\end{align*}
	Recall the degree $d_F:=2 \sum_{j=1}^{m}\alpha_j$, the $\xi$-invariant $\xi_F := 2 \sum_{j=1}^{m}  \left ( \beta_j-\frac{1}{2} \right)$, and the conductor $q_F:= (2 \pi )^{d_F} Q^2 \prod_{j=1}^{m} \alpha_{j}^{2 \alpha_j}$. We simplify the above with $d_F$ and $\xi_F$, and rewrite $H(s)$ as
	\begin{align}\label{New form of Hs  d}
		H(s) = \omega_o \left( Q_o t^{d_F/2}\right)^{1-2s} t^{-i \Im{\xi_F}} e^{i d_F t} \left(1 + O\!\left(\frac{1}{t}\right)\right),
	\end{align}
	where $\omega_o = \omega e^{-\frac{\pi i}{2}(d_F/2+\Re\xi_F)} \prod_{j=1}^{m} \alpha_j^{-2i\Im\beta_j}$ with $|\omega_o|=1$ and $Q_o= Q \prod_{j=1}^{m} \alpha_j^{\alpha_j}>0$. Also note that $Q_o=\sqrt{q_F}(2 \pi)^{-d_F/2}$.
	
	In this article, we restrict ourselves to $L$-functions in the Selberg class of degree $2$ with  $ \Im \xi_F =0$. More specifically, we investigate  Hardy's $Z$-function associated to the Dedekind zeta function over a quadratic field, the $L$-function associated with normalized Hecke primitive eigen cusp forms of weight $k$ and level  $\mathcal{N}$, and the $L$-function associated with a Maass cusp form of weight $0$ and level  $\mathcal{N}$ corresponding to an eigenvalue $1/4+r^2$ of the hyperbolic Laplacian. Thus, for $L$-functions of degree $2$ satisfying $\Im \xi_F=0$, we have 
	\begin{align*}
		H(s)=\omega_1 (Q_ot)^{1-2\sigma -2i t}  e^{2it}\left(1 + O\!\left(\frac{1}{t}\right)\right),
	\end{align*}
	where $\omega_1=\omega e^{-\frac{\pi i}{2}(1+\Re\xi_F)} \prod_{i=1}^{m} \alpha_i ^{-2i\Im\beta_i}$, with $|\omega_1|=1$ and $Q_o=\sqrt{q_F}/{2\pi}$.
	
	Therefore,
	\begin{align}\label{Hs -1/2}
		H^{-1/2}(s) =\omega_2 (Q_ot)^{\sigma-1/2+it}e^{-it} \left(1 + O\!\left(\frac{1}{t}\right)\right) ,
	\end{align}
	where $\omega_2=(\omega_1)^{-1/2}$  with $|\omega_2|=1.$
	
	Now to find the order of  Hardy's function and order of its analytic extension in a neighbourhood, we need the following convexity bound for $F(s)$.
	
	\begin{lem}\label{estimate of Fs interval}
		Let $F(s) \in \mathbb{S}_2$. Then for any  constant  $\delta > 0$, the $L$-function  $F(s)$ satisfies
		\[
		F(\sigma + it) =O\left ( t ^{1 - \sigma + \delta}\right) \quad \text{for} \quad  - \delta \leq \sigma \leq  1 + \delta.
		\]
	\end{lem}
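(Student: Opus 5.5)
The plan is the standard convexity argument: Phragmén--Lindelöf applied to $F$ between the lines $\sigma=1+\delta$ and $\sigma=-\delta$. For simplicity I treat $t\ge 2$, where $F$ is regular; the possible pole at $s=1$ only matters for bounded $t$. Multiplying by $(s-1)^k$ and dividing by a polynomial of the same degree removes it.

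\textbf{Right edge.} On $\sigma=1+\delta$ the Dirichlet series converges absolutely. Using the Ramanujan hypothesis (v), $a_F(n)\ll n^{\delta/2}$, so
\[
|F(1+\delta+it)|\le \sum_n |a_F(n)| n^{-1-\delta}\ll 1 .
\]
This is consistent with the claimed exponent $t^{1-\sigma+\delta}=t^{0}$ at $\sigma=1+\delta$.

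\textbf{Left edge.} On $\sigma=-\delta$ I use the functional equation \eqref{fun eq F=SF bar}: $F(-\delta+it)=H(-\delta+it)\overline{F}(1+\delta-it)$. The second factor is again $O(1)$, since $\overline F$ has conjugate coefficients. For $H$, the asymptotic \eqref{New form of Hs  d} with $d_F=2$ gives
\[
|H(\sigma+it)| = \bigl|(Q_o t)^{1-2\sigma}\bigr|\,\bigl|t^{-i\Im\xi_F}\bigr|\,(1+O(1/t)).
\]
The factor $t^{-i\Im \xi_F}$ has modulus $1$ for real $t$, and $|e^{idt}|=1$. Hence $|H(-\delta+it)|\asymp t^{1+2\delta}$. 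This gives $F(-\delta+it)\ll t^{1+2\delta}$, which matches $t^{1-\sigma+\delta}$ at $\sigma=-\delta$.

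\textbf{Interpolation.} Since $F$ has finite order by axiom (ii), the Phragmén--Lindelöf principle applies in the strip. Interpolating linearly in $\sigma$ between the exponents $0$ at $\sigma=1+\delta$ and $1+2\delta$ at $\sigma=-\delta$ gives the exponent
\[
\frac{(1+2\delta)(1+\delta-\sigma)}{1+2\delta}=1+\delta-\sigma,
\]
uniformly for $-\delta\le\sigma\le 1+\delta$. To make this rigorous, I apply the maximum principle to
\[
F(s)\,(s+i)^{-(1+\delta-s)}
\]
on the region $-\delta\le\sigma\le1+\delta$, $t\ge 2$, say. This function is bounded on both vertical edges and on the bottom segment $t=2$, where everything is continuous, and it has finite order, so it is bounded throughout. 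Writing $|(s+i)^{-(1+\delta-s)}|\asymp t^{-(1+\delta-\sigma)}$ up to bounded factors gives the claimed bound.

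\textbf{Main obstacle.} The only delicate point is the uniform size of $H$ on the left line. Stirling's formula is used only on a fixed vertical line, which is exactly what \eqref{ratio of gamma} supplies. So no real difficulty is expected; the argument is routine.
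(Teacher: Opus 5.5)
Your route is the paper's: bound $F$ on $\sigma=1+\delta$ by absolute convergence, bound it on $\sigma=-\delta$ by the functional equation together with $|H(-\delta+it)|\asymp t^{1+2\delta}$, and interpolate with Phragm\'en--Lindel\"of. Both edge estimates are fine. There is one harmless slip: at $s=-\delta+it$ the reflected factor is $\overline{F}(1+\delta+it)$, not $\overline{F}(1+\delta-it)$, but it is $O(1)$ either way. The paper does the interpolation by citing the convexity form of Phragm\'en--Lindel\"of (Iwaniec--Kowalski, Theorem 5.53).

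The step you chose to make explicit fails as written. Put $z=-(1+\delta-s)=(\sigma-1-\delta)+it$. Then
\[
\bigl|(s+i)^{z}\bigr|=|s+i|^{\sigma-1-\delta}\,e^{-t\arg(s+i)}.
\]
For $t\ge 2$ we have $\arg(s+i)=\pi/2-\arctan\bigl(\sigma/(t+1)\bigr)$, so $e^{-t\arg(s+i)}\asymp e^{-\pi t/2}$ uniformly in the strip. Hence
\[
\bigl|(s+i)^{-(1+\delta-s)}\bigr|\asymp t^{-(1+\delta-\sigma)}e^{-\pi t/2},
\]
not $t^{-(1+\delta-\sigma)}$. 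The problem is that the imaginary part of the exponent, which is $t$, multiplies an argument close to $\pi/2$, and no continuous branch of $\log(s+i)$ avoids this. So boundedness of your $G$ only yields $F(s)\ll t^{1+\delta-\sigma}e^{\pi t/2}$, which says nothing.

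The fix is to rotate the base so that its argument is $O(1/t)$. Use
\[
G(s)=F(s)\,(1-is)^{-(1+\delta-s)},
\]
where $1-is=-i(s+i)=(1+t)-i\sigma$ has positive real part on the half-strip. Then $\arg(1-is)=-\arctan\bigl(\sigma/(1+t)\bigr)$, and
\[
\bigl|(1-is)^{-(1+\delta-s)}\bigr|=|1-is|^{\sigma-1-\delta}\,e^{t\arctan(\sigma/(1+t))}\asymp t^{-(1+\delta-\sigma)},
\]
since $t\arctan\bigl(\sigma/(1+t)\bigr)\to\sigma$ stays bounded. With this choice, the rest of your argument goes through unchanged and gives the lemma: $G$ is bounded on both vertical edges and on the segment $t=2$, it has finite order, and Phragm\'en--Lindel\"of in the half-strip then bounds it throughout.
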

	\begin{proof}
		Using Stirling's formula for $\Gamma(s)$, from the functional equation \eqref{fun eq F=SF bar}, we have for $\delta >0$,
		\[
		F(-\delta+it) \ll t^{1+2\delta}
		.    \]
		Since the Dirichlet series
		$
		F(s)=\sum_{n=1}^{\infty}\frac{a_F(n)}{n^s}
		$
		converges absolutely in the half-plane $\Re(s)>1$, it follows that, for any
		$\delta>0$,
		\[
		F(1+\delta+it)\ll 1.
		\]
		Then applying the appropriate Phragm\'{e}n--Lindel\"{o}f \cite[Theorem 5.53]{iwaniec2004analytic} argument, we get the required result.
	\end{proof}

	Using  \eqref{Hs -1/2} together with Lemma \ref{estimate of Fs interval}, we obtain that for constant  $\delta >0$  and $  - \delta \leq \sigma \leq 1 + \delta,$ 
	\begin{align}\label{esti H -1/2 and F}
		F(\sigma+it) H^{-\frac{1}{2}}(\sigma+it) = O\left ( t^{\frac{1}{2}+\delta}\right ), \quad \text{as} \quad t \to \infty.
	\end{align}
	
	We now turn to certain exponential sums and integral estimates, which are the main tools we need. These estimates will be used both in preparing our auxiliary results and in the final proof of our main theorems.
	\subsection{Exponential sums and integral estimates}
	We now state the following results on exponential integrals, which will be useful in proving our main results.
	
	\begin{lem}\cite[Lemma 4.3]{titchmarsh1986theory}.\label{lemma tith 1st der}
		Let $f(x)$  and   $g(x)$  be  real functions such that  $f'(x)/g(x)$ is monotonic and $|f'(x)/g(x)| \geq m >0$, throughout the interval $[a,b]$. Then 
		\begin{align*}
			\left |  \int_a^b g(x)\,e^{i f(x)} \, dx \right | \leq \frac{4}{m}.
		\end{align*}
	\end{lem}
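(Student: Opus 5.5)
The statement is the classical first-derivative test (Titchmarsh, Lemma 4.3), and I will prove it by writing the integrand as a derivative times a monotone factor and applying the second mean value theorem.

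\textbf{Step 1: rewrite the integrand.} Note that $f'/g$ is monotonic and satisfies $|f'/g|\ge m>0$. So $f'/g$ has constant sign and $f'$ does not vanish. Put $h(x)=g(x)/f'(x)$. This function is monotonic, since it is the reciprocal of a monotonic function of constant sign, and it satisfies $|h|\le 1/m$. Then
\[
g(x)e^{if(x)} = h(x)\,f'(x)e^{if(x)} = h(x)\,\frac{d}{dx}\bigl(\cos f(x)\bigr)' \text{-type terms},
\]
more precisely
\[
\int_a^b g\,e^{if}\,dx=\int_a^b h(x)\,f'(x)\cos f(x)\,dx+i\int_a^b h(x)\,f'(x)\sin f(x)\,dx .
\]
I treat the real and imaginary parts separately.

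\textbf{Step 2: apply the second mean value theorem.} The theorem is applied to the monotone factor $h$. For the real part it gives some $\xi\in[a,b]$ with
\[
\int_a^b h f'\cos f\,dx = h(a)\int_a^\xi f'\cos f\,dx + h(b)\int_\xi^b f'\cos f\,dx .
\]
Each inner integral is a difference of values of $\sin f$, so it is bounded by $2$ in absolute value. Since $|h(a)|,|h(b)|\le 1/m$, the real part is at most $4/m$.

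To get the sharper bound $2/m$ per part, I would instead use the Bonnet form on $|h|$. This works because $h$ has constant sign, so we may take $|h|$ monotone, either decreasing or increasing. Then a single term remains, of size $|h(\text{endpoint})|\cdot 2\le 2/m$. The imaginary part is handled the same way and is also at most $2/m$.

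\textbf{Step 3: combine.} Adding the two parts gives $|I|\le 2/m+2/m=4/m$.

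\textbf{Main subtlety.} The delicate point is justifying Bonnet's form. It needs $|h|$ to be monotone and nonnegative, which is exactly why the constant sign of $f'/g$ matters. Some regularity is also needed, namely $f'$ continuous so that $f'\cos f$ is integrable with primitive $\sin f$; this is implicitly assumed.
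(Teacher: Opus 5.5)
The paper gives no proof of its own here but simply cites Titchmarsh's Lemma 4.3. Your argument is exactly Titchmarsh's proof and is correct: you write $g e^{if}=(g/f')\,f'e^{if}$ and apply Bonnet's form of the second mean value theorem to the monotone, constant-sign factor $g/f'$ in the real and imaginary parts separately, each of which is then at most $2/m$. The only point to tidy is that constant sign does not follow from monotonicity and $|f'/g|\ge m$ alone, since a monotone function can jump across $0$; it holds once $f'/g$ is continuous, as in the paper's applications, or can be read as part of the hypothesis, as in Titchmarsh's formulation ``$f'/g\ge m$ or $f'/g\le -m$''.
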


	\begin{lem}\cite[Lemma 2.3]{ivic2012theory}.\label{lem:sec der test}
		Let $f(x)$ be a real twice differentiable function  such that $|f''(x)| \ge \lambda_2 >0$ for $x \in [a,b]$. Let $g(x)$ be a positive monotonic function such that $|g(x) |\le G$ on $[a,b]$. Then 
		\[
		\int_a^b g(x) e^{if(x)} \, dx \le \frac{8G}{\sqrt{\lambda_2}}.
		\]
	\end{lem}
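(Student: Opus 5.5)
The plan is to first prove the bound in the unweighted case $g\equiv 1$, namely $\bigl|\int_\alpha^\beta e^{if(x)}\,dx\bigr|\le 8/\sqrt{\lambda_2}$ for every subinterval $[\alpha,\beta]\subseteq[a,b]$, and then to bring in the weight $g$ using the second mean value theorem for integrals.

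\textbf{Unweighted case.} Since $f''$ is continuous and $|f''|\ge\lambda_2>0$, $f''$ has a constant sign on $[a,b]$. Replacing $f$ by $-f$ (which conjugates the integral and does not change its modulus), we may assume $f''\ge\lambda_2$, so $f'$ is increasing.
\begin{itemize}
\item Let $c\in[\alpha,\beta]$ be the point where $|f'|$ is minimal. This is the zero of $f'$ if there is one, and otherwise an endpoint.
\item By the mean value theorem, $|f'(x)|\ge\lambda_2|x-c|$ for $x\in[\alpha,\beta]$.
\item Fix $r>0$. On $[c-r,c+r]\cap[\alpha,\beta]$ we use the trivial bound $2r$.
\item On each of the at most two remaining intervals, $f'$ is monotonic and $|f'|\ge\lambda_2 r$. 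Lemma~\ref{lemma tith 1st der} with weight $1$ then bounds each piece by $4/(\lambda_2 r)$.
\end{itemize}
This gives the total bound $2r+8/(\lambda_2 r)$. Choosing $r=2/\sqrt{\lambda_2}$ makes it $4/\sqrt{\lambda_2}+4/\sqrt{\lambda_2}=8/\sqrt{\lambda_2}$.

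\textbf{Weighted case.} Let $I=\int_a^b g(x)e^{if(x)}\,dx$ and write $I=|I|e^{i\theta}$. Then
\[
|I|=\int_a^b g(x)\cos\bigl(f(x)-\theta\bigr)\,dx,
\]
which is a real integral with a nonnegative monotonic weight. By Bonnet's form of the second mean value theorem there is a $\xi\in[a,b]$ with:
\begin{itemize}
\item if $g$ is decreasing, $|I|=g(a)\int_a^\xi\cos(f-\theta)\,dx$;
\item if $g$ is increasing, $|I|=g(b)\int_\xi^b\cos(f-\theta)\,dx$.
\end{itemize}
In either case, $|I|\le G\,\bigl|\int_J e^{i(f(x)-\theta)}\,dx\bigr|$ for a subinterval $J$ of $[a,b]$. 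The function $f-\theta$ satisfies the same second-derivative hypothesis, so the unweighted case gives $|I|\le 8G/\sqrt{\lambda_2}$.

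\textbf{Main obstacle.} The only delicate point is keeping the constant $8$. If one treated the real and imaginary parts separately, one would lose a factor of $2$. Rotating by $e^{-i\theta}$ before applying the mean value theorem avoids this loss. One also has to make sure the case where $c$ is an endpoint is covered; there only one piece remains after removing the neighbourhood of $c$, so the bound is only better.
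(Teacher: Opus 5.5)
Your proof is correct. It is the standard argument behind the cited result; the paper itself gives no proof and only refers to Ivi\'c. You remove a neighbourhood of the minimum of $|f'|$, apply Lemma~\ref{lemma tith 1st der} on the tails, take $r=2/\sqrt{\lambda_2}$, and then pass to the weight $g$ by Bonnet's mean value theorem after rotating by $e^{-i\theta}$, which is exactly what preserves the constant $8$. The only slip is your claim that $f''$ is continuous: ``twice differentiable'' does not give this. However, Darboux's theorem (intermediate value property of derivatives) still forces $f''$ to have constant sign when $|f''|\ge\lambda_2>0$, so your reduction to $f''\ge\lambda_2$ stands.
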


	\begin{lem}\cite[Lemma 4.3]{rajkumar2012}.\label{lemma three deriv expon}
		Let $f(x)$  be a real function in $[a,b]$,  which is three times continuously differentiable, such that $f'(x)$ is monotonic, $f'(c)=0$ for $a<c<b$, $|f''(x)| \asymp\lambda_2$ and $0<|f'''(x)|\ll \lambda_3.$ Let $g(x)$ be a real function which is two times continuously differentiable, with $|g(x)|\leq G,|g'(x)|\leq G_1$ for $x \in [a,b]$. Suppose that the functions
		\begin{equation*}
			f'^{6}(x)-8f''(c)f''^2(x)(f(x)-f(c))^3 \text { and }  g''(x)
		\end{equation*}
		have only a bounded number of zeros in $[a,b].$ Then,  
		\begin{align*}
			\ \int_a^b g(x)\,e^{i f(x)} \, dx   &   = g(c) (2\pi)^{1/2} \frac{e^{\pm i\pi/4+i f(c)}}{|f''(c)|^{1/2}}+O\left(\frac{G_1}{\lambda_2}\right) +  O\left(\frac{G(b-a)\lambda_3^2}{\lambda_2^3}\right) \\
			&+O\left(\frac{G\lambda_3}{\lambda_2^2}\right)
			+  O\left(  G \min \left( \frac{1}{|f'(a)|}+\frac{1}{|f'(b)|},\lambda_2 ^{-1/2}\right)  \right).
		\end{align*}
		Here, the sign in $e^{\pm i\pi/4}$ is chosen as positive or negative according as $f''$ is positive or negative.
	\end{lem}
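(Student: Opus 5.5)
The plan is the classical stationary phase argument in the style of Titchmarsh, adapted to the weaker hypothesis that only $f'''$ is controlled. I would first split off the amplitude by writing
\[
\int_a^b g(x)e^{if(x)}\,dx = g(c)\int_a^b e^{if(x)}\,dx + \int_a^b \bigl(g(x)-g(c)\bigr)e^{if(x)}\,dx .
\]

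\emph{The term with $g(x)-g(c)$.} Since $f'(c)=0$ and $|f''|\asymp\lambda_2$, we have $|f'(x)|\gg\lambda_2|x-c|$. Hence $h(x):=(g(x)-g(c))/f'(x)$ satisfies $|h|\ll G_1/\lambda_2$. Writing $e^{if}=(e^{if})'/(if')$ and integrating by parts, the boundary terms are $O(G_1/\lambda_2)$, and the remaining integral is $\int h'(x)e^{if(x)}\,dx$. The hypothesis that $g''$ has boundedly many zeros gives a bounded number of monotonicity pieces, on each of which $\int|h'|\ll\sup|h|$. This term is therefore $O(G_1/\lambda_2)$. If a cleaner route is needed, one can apply Lemma~\ref{lemma tith 1st der} on dyadic pieces away from $c$.

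\emph{The term $g(c)\int e^{if}$.} I would change variables by $f(x)-f(c)=\tfrac12 f''(c)v^2$, with $v$ having the sign of $x-c$. This is a monotone bijection because $f'$ is monotonic and vanishes only at $c$. The integral becomes $\int_{v(a)}^{v(b)} e^{i f''(c)v^2/2}\,\phi(v)\,dv$, where
\[
\phi(v)=\frac{dx}{dv}=\frac{f''(c)\,v}{f'(x)} .
\]
Taylor expansion with $|f'''|\ll\lambda_3$ gives $\phi(v)=1+O(\lambda_3|v|/\lambda_2)$ and $\phi'(v)=O(\lambda_3/\lambda_2)$. The remaining contributions are handled as follows.
\begin{itemize}
\item Replacing $\phi$ by $1$ and extending to $\mathbb{R}$ gives the Fresnel value $(2\pi)^{1/2}e^{\pm i\pi/4}|f''(c)|^{-1/2}$, with the sign of $f''$. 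Restoring $g(c)$ and $e^{if(c)}$ gives the main term.
\item The truncated tails $\int_{|v|>|v(b)|}$ and $\int_{|v|>|v(a)|}$ are bounded in two ways. By the first derivative test they are $O(1/|f'(b)|+1/|f'(a)|)$, since $f''(c)v$ corresponds to $f'(x)$ up to the factor $\phi$. By the second derivative test (Lemma~\ref{lem:sec der test}) they are $O(\lambda_2^{-1/2})$. Together this gives the $\min$ term.
\item The error $\int e^{if''(c)v^2/2}(\phi(v)-1)\,dv$ is treated by one integration by parts against the phase $f''(c)v^2/2$. The boundary terms and the term involving $\phi'$ give $O(G\lambda_3/\lambda_2^2)$ after multiplying by $|g(c)|\le G$. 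The remaining piece, where $\phi-1$ is controlled via the quadratic Taylor remainder across the whole interval, gives $O(G(b-a)\lambda_3^2/\lambda_2^3)$.
\end{itemize}

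\emph{Main obstacle.} The delicate step is justifying the integration by parts and the second derivative test in the $v$-variable. Both require the relevant amplitude, $(\phi(v)-1)/v$ or $\phi$ itself, to be piecewise monotone with a bounded number of pieces. I would compute $\phi'$ and $\phi''$ explicitly, eliminating $v$ through $v^2=2(f(x)-f(c))/f''(c)$. The sign changes are then governed exactly by the zeros of
\[
f'^6(x)-8f''(c)f''^2(x)\bigl(f(x)-f(c)\bigr)^3 ,
\]
which is why this hypothesis appears. With boundedly many such zeros, the variation of the amplitude on each piece is bounded by its supremum, which closes the estimates above. Keeping track of this algebra, and of uniformity when $v(a)$ or $v(b)$ is close to $0$, is where I expect most of the work to lie.
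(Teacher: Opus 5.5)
The paper gives no proof of this lemma; it is quoted from \cite{rajkumar2012}, so there is nothing in the paper to compare against. On its own terms, your outline is sound: the quadratic change of variables, the Fresnel main term, the tails via Lemmas~\ref{lemma tith 1st der} and~\ref{lem:sec der test}, and one integration by parts for the remainder. You have also found the real role of the hypothesis on $f'^6-8f''(c)f''^2(f-f(c))^3$. Two steps, however, do not work as written.

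\textbf{The $\phi-1$ term.} Put $\psi(v)=(\phi(v)-1)/(f''(c)v)=1/f'(x)-1/(f''(c)v)$. You cannot split $\psi'$ into a $\phi'$-piece and a remainder and bound them separately. Both pieces behave like $\pm\phi'(0)/(f''(c)v)$ near $v=0$, and $\phi'(0)=-f'''(c)/(3f''(c))\neq0$ because $f'''$ never vanishes, so neither piece is absolutely integrable. A pointwise bound $|\psi'|\ll\lambda_3^2/\lambda_2^3$ from a quadratic Taylor remainder of $\phi$ is also unavailable. It needs control of $\phi''$, which depends on the modulus of continuity of $f'''$ and not only on $\lambda_3$; only $f\in C^3$ is assumed.

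What works is to keep $\psi$ whole. The equation $d\psi/dv=0$ reads $f'^3=f''(c)^2f''(x)v^3$. Squaring and inserting $v^2=2(f(x)-f(c))/f''(c)$ gives exactly $f'^6=8f''(c)f''^2(f-f(c))^3$. So $\psi$ has boundedly many monotone pieces, and both the boundary terms and $\int\psi'e^{if''(c)v^2/2}\,dv$ are $\ll\sup|\psi|$. Now set $D=f'^2-2f''(f-f(c))$. Then $D'=-2f'''(f-f(c))$, which gives $|D|\ll\lambda_2\lambda_3|x-c|^3$. Hence $|\phi'|=|f''(c)D/f'^3|\ll\lambda_3/\lambda_2$ and $\sup|\psi|\ll\lambda_3/\lambda_2^2$.

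This yields $O(G\lambda_3/\lambda_2^2)$ with no $(b-a)\lambda_3^2/\lambda_2^3$ term at all; that term only appears if $\sup|\psi|$ is estimated by a cruder Taylor expansion. No computation of $\phi''$ is needed. Moreover, $D$ has constant sign on each side of $c$, so $\phi$ itself is automatically monotone there.

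\textbf{The $g-g(c)$ term.} Boundedly many zeros of $g''$ do not give boundedly many monotone pieces of $h=(g-g(c))/f'$. The zeros of $h'$ are those of $g'f'-(g-g(c))f''$, which the hypothesis does not control. You only need $h$ to have variation $\ll G_1/\lambda_2$. Write $h=AB$ with $A(x)=\int_0^1g'(c+s(x-c))\,ds$ and $B(x)^{-1}=\int_0^1f''(c+s(x-c))\,ds$.
\begin{itemize}
\item Since $g'$ is piecewise monotone, $A$ has variation $\ll G_1$, and $|A|\le G_1$.
\item Since $f'''\neq0$, $f''$ is monotone, so $B$ is monotone with $|B|\asymp\lambda_2^{-1}$.
\end{itemize}
The product rule for variation then gives what you need. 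Your dyadic fallback via Lemma~\ref{lemma tith 1st der} would lose a logarithm and would still need a monotonicity condition you do not have.
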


	We next state a bound for exponential sums involving the coefficients of integral weight cusp forms or Maass cusp forms.
	
	\begin{lem}\label{lemma exponential sum}
		Let $f$ be either a normalized primitive holomorphic Hecke cusp eigenform of even weight $k\geq 2$, or a normalized primitive Hecke--Maass cusp form of weight $0$ with Laplace eigenvalue $1/4+r^2$, where $r\in\mathbb{R}$. In each case, let $f$ have level $\mathcal{N}$ and nebentypus $\chi$. If $\lambda_f(n)$ denotes the normalized Fourier coefficient of $f$, then
		\begin{equation*}
			\sum_{n\leq T}\lambda_f(n)e^{2\pi i n x}
			\ll T^{1/2}\log(2T),
		\end{equation*}
		for $T\geq 1$ and $x\in\mathbb{R}$, where the implied constant depends only on $f$.
	\end{lem}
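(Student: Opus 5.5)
The plan is the classical Wilton--Jutila route to uniform additive-twist bounds for cusp-form coefficients. We treat the holomorphic and Maass cases in parallel; the only inputs that differ are the Voronoi kernel and the bound for individual coefficients. Note that in the Maass case the Ramanujan hypothesis is not needed here, only Rankin--Selberg.

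\textbf{Step 1: rational approximation.} Fix $x$ and apply Dirichlet's approximation theorem with parameter $Q=\sqrt{T}$. This gives $x=a/q+\beta$ with $(a,q)=1$, $q\le \sqrt T$ and $|\beta|\le 1/(q\sqrt T)$.

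\textbf{Step 2: the rational twist.} For the pure rational twist we use the Voronoi summation formula for $f$ twisted by $e(na/q)$, which is available at level $\mathcal N$. When $(q,\mathcal N)>1$ one can either restrict to a suitable sub-family or use the general Atkin--Lehner form of the formula (Kowalski--Michel--VanderKam). We apply it with a smooth cutoff of width $H$, giving
\[
\sum_{n\le T}\lambda_f(n)e(an/q)\ll q^{1/2}T^{1/4+\epsilon}\cdot(\text{short terms})+H T^{\epsilon},
\]
and choose $H$ optimally. A cleaner alternative is the classical Wilton-type estimate
\[
\sum_{n\le T}\lambda_f(n)e(an/q)\ll q^{1/2}T^{1/2}\log(2T)\,T^{0},
\]
or more precisely $\ll (q T)^{1/2}$ in Jutila's normalization. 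This is what Jutila's ``Lectures on a method in the theory of exponential sums'' proves uniformly in $a/q$, using the functional equation of $L(s,f,a/q)$, whose conductor is $\asymp q^2$, together with the Rankin--Selberg mean value $\sum_{n\le y}|\lambda_f(n)|^2\ll y$.

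\textbf{Step 3: the uniform bound.} Having the right normalization matters here. The sharp statement we want is the known bound $\sum_{n\le T}\lambda_f(n)e(nx)\ll T^{1/2}\log(2T)$ uniformly in $x$. It is due to Wilton for level $1$ holomorphic forms, to Epstein--Hafner--Sarnak and Miller for Maass forms, and to the general-level versions by Hafner and by Miller--Schmid. My plan is to reproduce the standard proof. Write $e(nx)=e(na/q)e(n\beta)$ and use partial summation in $n$ against $e(n\beta)$. Then show, for the smoothed rational sums, $S(y;a/q)\ll q\,y^{1/2}$ when $y\le q^2$ and $\ll (qy)^{1/2}\cdots$ otherwise. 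Combined with $|\beta|\le 1/(qT^{1/2})$, the partial summation factor $1+T|\beta|$ exactly balances the $q$-dependence, yielding $T^{1/2}$ up to a logarithm. The logarithm comes from a dyadic decomposition, or alternatively from the $\ell^2$ Rankin--Selberg bound used on the dual side of Voronoi.

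\textbf{Main obstacle.} I expect the hardest part to be the uniformity in $q$ at general level $\mathcal N$ with nebentypus, when $q$ shares factors with $\mathcal N$. There the twisted $L$-function's functional equation relates $f$ to a different Atkin--Lehner image, and the root number and dual coefficients must be controlled uniformly. I would handle this by citing the Voronoi formula for arbitrary $q$ from Kowalski--Michel--VanderKam, or by invoking Miller's uniform bound for automorphic forms on $\mathrm{GL}(2)$ directly. Since everything else is standard, in the write-up I would likely simply cite these results: Wilton and Jutila in the holomorphic case, and Epstein--Hafner--Sarnak and Miller for Maass forms of general level.
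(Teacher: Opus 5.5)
\textbf{There is a genuine gap in the argument you sketch.} The paper gives no argument of its own here. It simply cites Iwaniec: \emph{Topics in Classical Automorphic Forms} for the holomorphic case, and Theorem 8.1 of \emph{Spectral Methods of Automorphic Forms} for the Maass case. So your fallback of citing the literature matches the paper. The proof you propose to reproduce as ``the standard proof'', however, does not work, and the failure is in Step 3. Partial summation gives
\[
\sum_{n\le T}\lambda_f(n)e(nx)=S(T;a/q)e(T\beta)-2\pi i\beta\int_1^T S(y;a/q)e(y\beta)\,dy\ll \max_{y\le T}|S(y;a/q)|\,(1+T|\beta|).
\]
With $q\le\sqrt T$ and $|\beta|\le 1/(q\sqrt T)$, the factor $T|\beta|$ can be as large as $\sqrt T/q$.

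\begin{itemize}
\item Inserting your rational bound $S(y;a/q)\ll (qy)^{1/2}$ gives $(qT)^{1/2}+Tq^{-1/2}$. This is at least $T^{3/4}$ for every admissible $q$, and it is the trivial bound $T$ when $q=1$ (e.g.\ $x=\beta\asymp T^{-1/2}$).
\item No improvement of the rational input fixes this. At $q=1$, even the conjectural $\sum_{n\le y}\lambda_f(n)\ll y^{1/4+\epsilon}$ gives only $T^{3/4+\epsilon}$.
\item Changing the Dirichlet parameter does not help either. Making $q=1$ harmless forces $Q\gtrsim T$, and then $q\asymp Q$ is trivial.
\end{itemize}

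So the $q$-dependence does not ``exactly balance''. Bounding $S(y;a/q)$ pointwise throws away its oscillation in $y$, and that oscillation is exactly what interacts with $e(y\beta)$. To make the Diophantine route work, you would have to keep $e(n\beta)$ inside the Voronoi test function and evaluate the resulting oscillatory Bessel transforms by stationary phase (Jutila's method). None of that is in your sketch.

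\textbf{The missing idea is that the cited proofs need no rational approximation at all.} Take holomorphic $f(z)=\sum_{n\ge1}a_f(n)e(nz)$ with $a_f(n)=\lambda_f(n)n^{(k-1)/2}$. The function $y^{k/2}|f(z)|$ is $\Gamma_0(\mathcal N)$-invariant (the nebentypus has modulus one) and decays at every cusp, so it is bounded on $\mathbb H$. Taking $y=1/T$,
\[
\sum_{n\le T}a_f(n)e(nx)=\int_0^1 f(x+\beta+i/T)\sum_{n\le T}e^{2\pi n/T}e(-n\beta)\,d\beta\ll T^{k/2}\int_0^1\min(T,\|\beta\|^{-1})\,d\beta\ll T^{k/2}\log 2T.
\]
Partial summation then converts this into $T^{1/2}\log 2T$ for $\lambda_f(n)$.

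The uniform bound on $y^{k/2}|f(z)|$ already encodes the modular transformation at every cusp $a/q$ and every scale $\beta$ at once. Consequently, what you flag as the main obstacle never arises: uniformity in $q$ when $(q,\mathcal N)>1$, Atkin--Lehner images, and root numbers of twisted $L$-functions. Also, the logarithm comes from the Dirichlet-kernel integral, not from a dyadic decomposition or from Rankin--Selberg on the dual side.

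The Maass case rests on the same principle, with the cusp form $u$ itself bounded on $\mathbb H$. It needs some extra care for the $K$-Bessel weight that appears when Fourier coefficients are extracted. As you correctly note, the Ramanujan hypothesis is not used anywhere.
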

	
	\begin{proof}
		The result follows from  \cite[Lemma~5.3]{iwaniec1997topics} and \cite[Theorem~8.1]{iwaniec1995introduction}.	
	\end{proof}

	We now present the cases needed in this paper for which the Selberg normality conjecture \eqref{normality conjectures} is known to hold.
	
	\begin{lem}\cite[Theorem 2.2]{humphries2022jesse}\label{normality cojecture for both Maass and modular}
		Let $f$ be either a normalized primitive holomorphic Hecke cusp eigenform of even weight $k\geq 2$, or a normalized primitive Hecke--Maass cusp form of weight $0$ with Laplace eigenvalue $1/4+r^2$, where $r\in\mathbb{R}$. In each case, let $f$ have level $\mathcal{N}$ and nebentypus $\chi$. If $\lambda_f(n)$ denotes the normalized Fourier coefficient of $f$, then
		\[
		\sum_{p\leq x} |\lambda_f(p)|^2=(1+o(1))\frac{x}{\log x}.
		\]
	\end{lem}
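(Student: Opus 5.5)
The plan is to derive the statement from a prime number theorem for the Rankin--Selberg $L$-function $L(s,f\times\bar f)$, followed by partial summation. For $p\nmid\mathcal N$ write $\lambda_f(p)=\alpha_p+\beta_p$ with $\alpha_p\beta_p=\chi(p)$. The local Rankin--Selberg factor has Satake parameters $\{\alpha_p\bar\alpha_p,\alpha_p\bar\beta_p,\beta_p\bar\alpha_p,\beta_p\bar\beta_p\}$. Hence the von Mangoldt coefficients of $-\tfrac{L'}{L}(s,f\times\bar f)$ are
\[
\Lambda_{f\times\bar f}(p^k)=\bigl|\alpha_p^k+\beta_p^k\bigr|^2\log p ,
\]
and in particular $\Lambda_{f\times\bar f}(p)=|\lambda_f(p)|^2\log p$. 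The finitely many primes $p\mid\mathcal N$ contribute $O(1)$ to the prime sum, so they can be ignored.

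First I would collect the analytic input. By Rankin--Selberg theory (Jacquet--Piatetski-Shapiro--Shalika, Shahidi, Moeglin--Waldspurger), $L(s,f\times\bar f)$ continues meromorphically to $\mathbb C$ and has a functional equation. Since $f$ is cuspidal, its only pole in $\Re s\ge 1/2$ is a simple pole at $s=1$. It does not vanish on $\Re s=1$ (Shahidi), and in fact it has a standard zero-free region of de la Vall\'ee Poussin type near $\Re s=1$, apart from at most one real exceptional zero, which has no effect for fixed $f$ (Moreno; Humphries--Thorner). A truncated Perron formula, with contour shifted into this region and convexity bounds for $L'/L$, then gives
\[
\psi_{f\times\bar f}(x):=\sum_{n\le x}\Lambda_{f\times\bar f}(n)=x+O\bigl(x\,e^{-c\sqrt{\log x}}\bigr).
\]
Alternatively, a Wiener--Ikehara Tauberian argument gives $\psi_{f\times\bar f}(x)\sim x$ with no error term. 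This applies because $\Lambda_{f\times\bar f}(n)\ge 0$ (it is a squared modulus times $\log p$), and because of the pole and nonvanishing on $\Re s=1$. This second route is the one I would actually use, since only the asymptotic is needed.

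Next I would remove the prime powers $p^k$ with $k\ge 2$. Using $|\alpha_p|,|\beta_p|\le p^{\theta}$ with $\theta=7/64$ (Kim--Sarnak; $\theta=0$ in the holomorphic case by Deligne), their contribution is
\[
\ll\sum_{k\ge2}\sum_{p^k\le x}p^{2k\theta}\log p\ll x^{1/2+2\theta}\log x=o(x).
\]
Thus $\sum_{p\le x}|\lambda_f(p)|^2\log p\sim x$, and partial summation gives $\sum_{p\le x}|\lambda_f(p)|^2=(1+o(1))\,x/\log x$.

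The main obstacle is the nonvanishing of $L(s,f\times\bar f)$ on $\Re s=1$, together with the precise identification of the pole and the treatment of the ramified local factors at $p\mid\mathcal N$ for general nebentypus. This is deep automorphic input that I would quote rather than reprove. The prime-power step is elementary once some bound $\theta<1/4$ toward Ramanujan is available, and this is why the Maass case needs no unproved hypothesis here.
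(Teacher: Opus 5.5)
Your proposal is correct and takes essentially the paper's route. The paper gives no argument of its own here: it quotes the statement as the Rankin--Selberg prime number theorem of Humphries--Thorner. You unpack exactly that input, namely $\psi_{f\times\bar f}(x)\sim x$ via Wiener--Ikehara (using the simple pole at $s=1$ and Shahidi's nonvanishing on $\Re s=1$), removal of prime powers with the Kim--Sarnak exponent $7/64$ giving $O(x^{23/32}\log x)$, and then partial summation.

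The one point to tidy is that you have verified $\Lambda_{f\times\bar f}(n)\ge 0$ only at unramified primes. You should therefore apply Wiener--Ikehara to the partial $L$-function with the finitely many ramified Euler factors removed. This is harmless, since those local factors are holomorphic and nonvanishing on $\Re s\ge 1$, so the pole at $s=1$ and the nonvanishing on $\Re s=1$ are unaffected.
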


	%\color{blue}
	\begin{lem}\label{lemma 4th moment}
		Let $f$ and $\lambda_f(n)$ be as in Lemma \ref{normality cojecture for both Maass and modular}. Then we have 
		\[
		\sum_{p\leq x}|\lambda_f(p)|^4 \ll \frac{x}{\log x},
		\]
		where the implied constant depends on $f$.
	\end{lem}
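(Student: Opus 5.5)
The statement is Lemma~\ref{lemma 4th moment}, the bound $\sum_{p\le x}|\lambda_f(p)|^4\ll x/\log x$. My plan is to express $|\lambda_f(p)|^4$ through Hecke relations at primes, in terms of coefficients of Rankin--Selberg type $L$-functions. Then I will apply known prime number theorems for these $L$-functions, or, more crudely, bounds that only require analyticity and non-vanishing at $s=1$.

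First I treat the holomorphic case, where Deligne's bound $|\lambda_f(p)|\le 2$ holds. There the bound is immediate: $|\lambda_f(p)|^4\le 4|\lambda_f(p)|^2$, and Lemma~\ref{normality cojecture for both Maass and modular} gives $\sum_{p\le x}|\lambda_f(p)|^2\sim x/\log x$. Hence
\[
\sum_{p\le x}|\lambda_f(p)|^4\le 4\sum_{p\le x}|\lambda_f(p)|^2\ll \frac{x}{\log x}.
\]
The same one-line argument works in the Maass case if one assumes the Ramanujan hypothesis, as in Theorem~\ref{thm:S2}(ii).

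For the unconditional Maass case (needed for Theorem~\ref{thm:Maass-unconditional}) I need a genuine fourth moment bound. For $p\nmid\mathcal{N}$ I use the Hecke relation $\lambda_f(p)^2=\lambda_f(p^2)+\chi(p)$, together with $\overline{\lambda_f(p)}=\overline{\chi}(p)\lambda_f(p)$. These give $|\lambda_f(p)|^2=\overline{\chi}(p)\lambda_f(p^2)+1$, and therefore
\[
|\lambda_f(p)|^4 \ll 1+|\lambda_f(p^2)|^2 .
\]
At $p\nmid \mathcal N$, the coefficient $\lambda_f(p^2)$ agrees up to a unimodular twist with $\lambda_{\mathrm{sym}^2 f}(p)$. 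The symmetric square lift of Gelbart--Jacquet is automorphic on $\mathrm{GL}_3$, and is cuspidal unless $f$ is dihedral. Consequently the Rankin--Selberg $L$-function $L(s,\mathrm{sym}^2 f\times\overline{\mathrm{sym}^2 f})$ has a simple pole at $s=1$ and is non-vanishing on $\Re s=1$. The prime number theorem for Rankin--Selberg $L$-functions then gives $\sum_{p\le x}|\lambda_{\mathrm{sym}^2f}(p)|^2\log p\sim x$, and partial summation yields $\sum_{p\le x}|\lambda_f(p^2)|^2\ll x/\log x$. The ramified primes $p\mid\mathcal N$ contribute $O(1)$, since there $|\lambda_f(p)|\le 1$.

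The dihedral case needs separate handling. There $\mathrm{sym}^2 f$ decomposes as an isobaric sum of $\mathrm{GL}_1$ and $\mathrm{GL}_2$ pieces, and the same bound follows from prime number theorems for those pieces, since only the pole order of the Rankin--Selberg product changes. The main obstacle is citing a prime number theorem for $\mathrm{GL}_3\times\mathrm{GL}_3$ Rankin--Selberg $L$-functions with uniform control of the prime sum, for instance that of Liu--Ye, without assuming Ramanujan.

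If that citation is not available in convenient form, I will fall back on a cruder route. I will take the Dirichlet series $\sum_n |\lambda_{\mathrm{sym}^2f}(n)|^2 n^{-s}$, which has non-negative coefficients and meromorphic continuation with a pole of bounded order at $s=1$. A Chebyshev-type upper bound, via Rankin's trick or Landau's lemma, gives $\sum_{p\le x}|\lambda_{\mathrm{sym}^2f}(p)|^2\log p\ll x$. Only this upper bound is needed, not an asymptotic.
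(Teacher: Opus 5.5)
Your main route is correct and is essentially the paper's. For $p\nmid\mathcal N$ the Hecke relations give $|\lambda_f(p)|^2=1+\overline{\chi}(p)\lambda_f(p^2)$, hence $|\lambda_f(p)|^4\ll 1+|\lambda_f(p^2)|^2$. The prime second moment of the cuspidal $\mathrm{GL}_3$ lift is then bounded by a Rankin--Selberg prime number theorem.

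There are two minor differences. First, the paper works with $\mathrm{Ad}(f)=\mathrm{sym}^2 f\otimes\overline{\chi}$. Its coefficient $A_f(p)=\overline{\chi}(p)\lambda_f(p^2)$ has the same absolute value as yours, but $\mathrm{Ad}(f)$ is self-contragredient. Rankin--Selberg prime number theorems of Liu--Wang--Ye type, as I recall them, assume one factor is self-contragredient, so you should pass to $\mathrm{Ad}(f)$ when citing; the two Rankin--Selberg squares agree at unramified primes. Second, in the dihedral case the paper simply notes that Ramanujan holds: $|\lambda_f(p)|\le 2$ for $p\nmid\mathcal N$, since the coefficients come from a unitary Hecke character. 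That is quicker than your isobaric decomposition of $\mathrm{sym}^2 f$, though yours also works.

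Your fallback, however, would not prove the lemma as stated.
\begin{itemize}
\item Rankin's trick, applied to nonnegative $b_n$ with $\sum_n b_n n^{-\sigma}\sim c/(\sigma-1)$, only gives $\sum_{n\le x}b_n\le x^{\sigma}\sum_n b_n n^{-\sigma}\ll x\log x$ at $\sigma=1+1/\log x$. This loses a factor $\log x$.
\item Landau's lemma is purely qualitative and gives no growth bound.
\item You apply the argument to $\sum_n|\lambda_{\mathrm{sym}^2f}(n)|^2n^{-s}$, which controls a sum over all $n\le x$. That sum is genuinely of size $\asymp x$, so bounding the prime sum by it can never reach $x/\log x$.
\end{itemize}

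The crude argument that does work is a Tauberian theorem for $-\frac{L'}{L}(s,\pi\times\tilde\pi)$. Its coefficient at $p^k$, for $p$ unramified, is $\log p\,|\alpha_1(p)^k+\alpha_2(p)^k+\alpha_3(p)^k|^2\ge 0$. Wiener--Ikehara, using the simple pole and Shahidi's non-vanishing on $\Re s=1$, shows these coefficients sum to $\sim x$ over $n\le x$. Discarding the nonnegative terms with $k\ge 2$ then gives $\sum_{p\le x}|\lambda_{\mathrm{sym}^2f}(p)|^2\log p\ll x$. A finite-segment Tauberian theorem of Chebyshev type gives the same $O(x)$ using only regularity near $s=1$; that is the correct form of your crude idea.

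Incidentally, the only use of this lemma is the error term in \eqref{log P/D 4th mome}. That step would survive even the weaker bound $\sum_{p\le x}|a_F(p)|^4\ll x\log x$, which gives an error $O\bigl(L/(\log L)^2+1/\log L\bigr)=o(L/\log L)$. So your crude bound would suffice for the application, just not for the statement itself.
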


\begin{proof}
	For holomorphic Hecke cusp forms, we have
	$|\lambda_f(p)|\leq 2$ for $p\nmid N$.
	Hence, the result follows in this case.
	
	Now, for Hecke--Maass cusp forms, $f$ is either
	dihedral or non-dihedral. If $f$ is dihedral,
	then $|\lambda_f(p)|\leq 2$ for $p\nmid N$,
	and the result follows.
	
	Now assume that $f$ is non-dihedral.
	Let $L(s,\operatorname{Ad}(f))$ denote
	the adjoint square $L$-function of $f$ with
	the Euler factors at primes dividing $N$
	omitted, defined for $\Re(s)>1$ by
	\[
	L(s,\operatorname{Ad}(f))
	:=\sum_{n\geq 1}\frac{A_f(n)}{n^s},
	\]
	where
	\[
	A_f(n)=
	\begin{cases}
		\displaystyle
		\sum_{k^2\mid n}
		\lambda_f\left(\frac{n^2}{k^4}\right)
		\overline{\chi\left(\frac{n}{k^2}\right)},
		& \text{if } (n,N)=1,\\[8pt]
		0, & \text{otherwise}.
	\end{cases}
	\]
	For $p\nmid N$, we have
	\[
	A_f(p)=\lambda_f(p^2)\overline{\chi(p)}
	=|\lambda_f(p)|^2-1,
	\]
	where the last equality follows from the
	Hecke relations  $\lambda_f(p^2) \overline{\chi(p)} = |\lambda_f(p)|^2-1$.
	Since $f$ is non-dihedral, its adjoint lift
	is a self-contragredient cuspidal automorphic
	representation of $\mathrm{GL}_3$.
	Hence, by \cite[Lemma~3.2]{liu2015}
	and partial summation, we have
	\[
	\sum_{p\leq x}|A_f(p)|^2
	\ll \frac{x}{\log x}.
	\]
	Therefore,
	\begin{align*}
		\sum_{\substack{p\leq x\\p\nmid N}}
		|\lambda_f(p)|^4
		=\sum_{\substack{p\leq x\\p\nmid N}}
		(1+A_f(p))^2
		\ll \sum_{p\leq x}(1+|A_f(p)|^2)
		\ll \frac{x}{\log x}.
	\end{align*}
	Since the primes dividing $N$ contribute
	$O(1)$, the desired result follows.
\end{proof}
%\normalcolor

	\begin{lem}{\cite[Lemma 5.2]{milinovich2014moments}.}\label{normality conjecture dedekind}
		Let $\zeta_K(s)=\sum_{n=1}^{\infty}a_K(n)n^{-s} $ be the Dedekind zeta function associated to  any  quadratic field $K$. Then 
		\[
		\sum_{p\leq x}a^2_K(p) \ =(2+o(1)) \frac{x}{\log x}.
		\]
	\end{lem}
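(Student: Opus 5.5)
The plan is to make everything explicit through the factorization of $\zeta_K$. For a quadratic field $K$ with discriminant $D$, let $\chi_D$ be the associated primitive real (Kronecker) character modulo $|D|$. The classical factorization
\[
\zeta_K(s)=\zeta(s)L(s,\chi_D)
\]
holds, and comparing Dirichlet coefficients at a prime $p$ gives
\[
a_K(p)=1+\chi_D(p).
\]
This matches the splitting behaviour of $p$ in $K$: the value is $2$ when $p$ splits, $0$ when $p$ is inert, and $1$ when $p$ ramifies.

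Next we expand the square. Since $\chi_D$ is real, $\chi_D(p)^2=1$ for $p\nmid D$ and $\chi_D(p)^2=0$ for $p\mid D$. Hence
\[
\sum_{p\le x}a_K(p)^2=\sum_{p\le x}\bigl(1+2\chi_D(p)+\chi_D(p)^2\bigr)=2\pi(x)+2\sum_{p\le x}\chi_D(p)-\omega(D),
\]
where $\omega(D)$ is the number of prime divisors of $D$. This last term is $O(1)$ because $D$ is fixed.

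It remains to treat the character sum. Since $\chi_D$ is non-principal, the prime number theorem for arithmetic progressions gives
\[
\sum_{p\le x}\chi_D(p)=\sum_{\substack{a\bmod |D|\\ (a,D)=1}}\chi_D(a)\,\pi(x;|D|,a)=o\!\left(\frac{x}{\log x}\right).
\]
Here each $\pi(x;|D|,a)$ equals $(1+o(1))\,\pi(x)/\varphi(|D|)$, and the main terms cancel because $\sum_a\chi_D(a)=0$. Equivalently, this follows from the nonvanishing of $L(1+it,\chi_D)$ on the line $\Re(s)=1$.

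Combining this with $\pi(x)=(1+o(1))\,x/\log x$ yields $(2+o(1))\,x/\log x$, as claimed. The only nontrivial input is the prime number theorem for the non-principal character $\chi_D$; since $D$ is fixed, no uniformity in the modulus is needed, so I do not expect a real obstacle.
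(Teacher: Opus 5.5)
Your proof is correct: $a_K(p)=1+\chi_D(p)$, the expansion to $2\pi(x)+2\sum_{p\le x}\chi_D(p)+O(1)$, and the prime number theorem for the non-principal character $\chi_D$ are exactly what is needed. The paper does not prove this statement itself (it cites Milinovich and Turnage-Butterbaugh), but your argument is the one the paper uses for the companion Lemma~\ref{lemma normality conjecture for prod} on $L(s,\chi_1)L(s,\chi_2)$: expand the square, let each diagonal term contribute $\pi(x)$, and remove the cross terms with the prime number theorem for a non-principal character.
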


	\begin{lem}\label{lemma normality conjecture for prod}
		Let $F(s)=L(s,\chi_1)L(s,\chi_2),$
		where $\chi_1$ and $\chi_2$ are primitive Dirichlet characters modulo $q_1$ and $q_2$, respectively, with $q_1q_2$ not a perfect square. Then
		\[
		\sum_{p\le x}|a_F(p)|^2
		=(2+o(1))\frac{x}{\log x}.
		\]
	\end{lem}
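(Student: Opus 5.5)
The coefficients factor as a convolution, $a_F(n)=\sum_{d e=n}\chi_1(d)\chi_2(e)$. At a prime this gives $a_F(p)=\chi_1(p)+\chi_2(p)$. Expanding the square, we get
\[
|a_F(p)|^2=|\chi_1(p)|^2+|\chi_2(p)|^2+2\,\Re\bigl(\chi_1(p)\overline{\chi_2(p)}\bigr).
\]
So I would split $\sum_{p\le x}|a_F(p)|^2$ into three sums: $\sum_{p\le x}|\chi_1(p)|^2$, $\sum_{p\le x}|\chi_2(p)|^2$, and the cross term $2\Re\sum_{p\le x}\chi_1\overline{\chi_2}(p)$.

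The first two sums are routine. For $p\nmid q_j$ we have $|\chi_j(p)|^2=1$, and only finitely many primes divide $q_j$. The prime number theorem therefore gives
\[
\sum_{p\le x}|\chi_j(p)|^2=\pi(x)+O(1)=(1+o(1))\frac{x}{\log x}.
\]
The two sums together contribute $(2+o(1))x/\log x$.

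The main step is to show that the cross term is $o(x/\log x)$. Set $\psi=\chi_1\overline{\chi_2}$, a Dirichlet character modulo $\mathrm{lcm}(q_1,q_2)$, induced by some primitive character $\psi^*$. If $\psi$ is not principal, then $\psi^*$ is nontrivial. The prime number theorem for arithmetic progressions, equivalently non-vanishing of $L(s,\psi^*)$ on $\Re s=1$, then gives $\sum_{p\le x}\psi^*(p)=o(x/\log x)$. Changing $\psi^*$ to $\psi$ alters only finitely many primes, so the same bound holds for $\psi$, and the lemma follows.

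The obstacle is to rule out $\psi$ being principal, and this is where the hypothesis enters. If $\psi$ were principal, then $\chi_1(n)=\chi_2(n)$ for all $n$ coprime to $q_1q_2$. Two primitive characters that agree off finitely many primes induce the same primitive character. Since each is already primitive, $\chi_1=\chi_2$ and in particular $q_1=q_2$. But then $q_1q_2=q_1^2$ is a perfect square, contradicting the assumption. Hence $\psi$ is non-principal, which completes the argument. I would cite the standard fact that a primitive character is determined by its values on integers coprime to any fixed modulus.
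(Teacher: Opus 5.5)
Your proposal is correct and follows the paper's proof essentially step for step. You expand $|\chi_1(p)+\chi_2(p)|^2$, treat the diagonal terms with the prime number theorem, and show the cross term is $o(x/\log x)$ because $\chi_1\overline{\chi_2}$ is nonprincipal: $q_1q_2$ not a square forces $q_1\neq q_2$, hence $\chi_1\neq\chi_2$. Your contrapositive argument, via uniqueness of the primitive character inducing a given character, spells out the step the paper compresses into ``by primitivity.''
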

	
	\begin{proof}
		Since
		$
		a_F(p)=\chi_1(p)+\chi_2(p),
		$
		we have
		\[
		|a_F(p)|^2
		=|\chi_1(p)|^2+|\chi_2(p)|^2
		+\chi_1(p)\overline{\chi_2(p)}
		+\overline{\chi_1(p)}\chi_2(p).
		\]
		View both characters modulo $\operatorname{lcm}(q_1,q_2)$. Since $q_1q_2$ is not a perfect square, we have $q_1\neq q_2$; hence, by primitivity, $\chi_1\neq\chi_2$, and therefore $\chi_1\overline{\chi_2}$ is nonprincipal. By the prime number theorem for Dirichlet characters, the two cross terms contribute $o(x/\log x)$, while each diagonal term contributes $(1+o(1))x/\log x$. This proves the result.
	\end{proof}

	\subsection{Resonance method}
	
	The resonator $R(t)$ constructed by Soundararajan in \cite{soundararajan2007extreme} has  the form of a Dirichlet polynomial
	\be\label{resonator}
	R(t) = \sum_{m \leq N} r(m) m^{-it} \quad \text{with} \quad  N \leq T^\alpha, \alpha=1-\epsilon, \, \epsilon >0.
	\ee
	Choose a smooth function $\Phi$ supported on $[1,2]$ with $0\leq \Phi(t)\leq 1$, and assume that $\Phi(t)=1$ for $t\in(5/4,7/4)$. Define its Fourier transform by
	$$
	\widehat{\Phi}(y)=\int_{\mathbb{R}}\Phi(t)e^{-ity}\,dt.
	$$
	Then, for any fixed integer $A\geq 1$, it satisfies the decay estimate $\widehat{\Phi}(y)\ll |y|^{-A}$.

	%%%%%%%%%%%%%%%%%%
%	\normalcolor
%	\color{blue}
	
	For a small $0<\eta<1$. Put
	\be\label{L^2}
	L^2=\frac{(1-\eta)\log N\log\log N}{\kappa},
	\qquad
	M=\exp\big((\log L)^2\big),
	\ee
	and define the nonnegative multiplicative function $f$, supported on
	squarefree integers, by
	\begin{equation}\label{f(p)}
		f(p)=
		\begin{cases}
			\dfrac{L}{\sqrt{p}\log p}, & L^2\leq p\leq M,\\[2mm]
			0, & \text{otherwise},
		\end{cases}
	\end{equation}
	with $f(p^j)=0$ for $j\geq 2.$ Set $r(n)=a_F(n)f(n).$

	\begin{lem}\label{lemma >=exp()}
		Let $F (s) =\sum_{n=1}^{\infty}a_F(n)n^{-s}$, where $a_F(n)$ is multiplicative and $a_F(1)=1$. Suppose that the coefficients $a_F(n)$ satisfy \eqref{normality conjectures} and 
		\be\label{4th moment}
		\sum_{ p \leq x } |a_F(p)|^4 \ll \frac{x}{\log x}.
		\ee
		 Then, 
		for the preceding resonator,
		\[
		\frac{1}{\sum_{n\leq N} |r(n)|^2}
		\sum_{nk\leq N}
		\frac{a_F(k)r(n)\overline{r(nk)}}{\sqrt{k}}
		\geq
		\exp\left(
		\left(\sqrt{\kappa(1-\eta)}+o(1)\right)
		\sqrt{\frac{\log N}{\log\log N}}
		\right).
		\]
		In particular, this bound holds  for $F \in \mathbb{S}_{\text{poly}}$ satisfies \eqref{normality conjectures}.

	\end{lem}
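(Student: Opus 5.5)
We follow Soundararajan's computation, adapted to the coefficients $a_F$. Since $r(n)=a_F(n)f(n)$ is supported on squarefree $n$ and $f\ge 0$, the sum $\sum_{nk\le N} a_F(k)r(n)\overline{r(nk)}/\sqrt{k}$ only sees pairs where $nk$ is squarefree, so $(n,k)=1$. Multiplicativity of $a_F$ then gives $\overline{r(nk)}=\overline{a_F(n)}\,\overline{a_F(k)}f(n)f(k)$. Each term therefore becomes $|a_F(k)|^2 f(k)k^{-1/2}\,|a_F(n)|^2 f(n)^2\ge 0$. Positivity lets us discard terms freely; the only difficulty is the truncation $nk\le N$.

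\textbf{Removing the truncation.} We compare with the untruncated sums
\[
S_1=\sum_{n}|r(n)|^2=\prod_{L^2\le p\le M}\bigl(1+|a_F(p)|^2f(p)^2\bigr),
\]
\[
S_2=\sum_{(n,k)=1}\frac{|a_F(k)|^2f(k)}{\sqrt k}\,|r(n)|^2=\prod_{p}\Bigl(1+|a_F(p)|^2f(p)^2+\frac{|a_F(p)|^2f(p)}{\sqrt p}\Bigr).
\]
Since the left side's denominator satisfies $\sum_{n\le N}|r(n)|^2\le S_1$, it suffices to show
\[
\sum_{nk>N}(\cdots)\le \tfrac12 S_2 \qquad\text{and}\qquad \frac{S_2}{S_1}\ge \exp\bigl((\sqrt{\kappa(1-\eta)}+o(1))\sqrt{\log N/\log\log N}\bigr).
\]

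\textbf{Tail bound (Rankin's trick).} We bound $\mathbf 1_{nk>N}\le (nk/N)^{\alpha}$ with $\alpha=1/(\log L)^3$. The tail is then at most $N^{-\alpha}$ times the product $\prod_p\bigl(1+|a_F(p)|^2f(p)^2p^{\alpha}+|a_F(p)|^2f(p)p^{\alpha-1/2}\bigr)$. Comparing with $S_2$, the ratio is
\[
\le \exp\Bigl(\sum_{L^2\le p\le M}(p^{\alpha}-1)\bigl(|a_F(p)|^2f(p)^2+|a_F(p)|^2f(p)/\sqrt p\bigr)\Bigr).
\]
Since $p\le M=e^{(\log L)^2}$, we have $p^\alpha-1\ll \alpha\log p$. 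By \eqref{normality conjectures} and partial summation,
\[
\sum \frac{|a_F(p)|^2 L}{p\log p}\,\alpha\log p=\alpha L\sum_{L^2\le p\le M}\frac{|a_F(p)|^2}{p}\ll \alpha L\log\log M\ll \frac{L}{(\log L)^2}.
\]
The $f^2$ term is smaller. Meanwhile $\log N^{\alpha}=\log N/(\log L)^3$, and with $L^2\asymp\log N\log\log N$ we get $\log N\asymp L^2/\log L$, so $\alpha\log N\asymp L^2/(\log L)^4$. This dominates $L/(\log L)^2$. Hence the tail is $o(S_2)$, and in fact exponentially small relative to $S_2$.

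\textbf{Main term.} We have
\[
\frac{S_2}{S_1}=\prod_p\Bigl(1+\frac{|a_F(p)|^2f(p)/\sqrt p}{1+|a_F(p)|^2f(p)^2}\Bigr).
\]
Here $f(p)^2\le L^2/(p\log^2p)\le 1/\log^2 L^2$ is small, except that $|a_F(p)|^2f(p)^2$ needs control when $|a_F(p)|$ is large. This is where \eqref{4th moment} enters. Using $\log(1+x)\ge x-x^2/2$ and $1/(1+y)\ge 1-y$, the logarithm is at least
\[
\sum_p \frac{|a_F(p)|^2L}{p\log p}-O\Bigl(\sum_p\frac{|a_F(p)|^4L^2}{p^2\log^2p}\Bigr)-O\Bigl(\sum_p \frac{|a_F(p)|^4 L^3}{p^2\log^3 p}\Bigr).
\]
By \eqref{4th moment} and partial summation from $L^2$, the error terms are $O(1/\log^3 L)$ and $O(L/\log^4L)$. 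By \eqref{normality conjectures}, the main term is
\[
L\int_{L^2}^{M}\frac{\kappa\,du}{u\log^2u}(1+o(1))=\frac{\kappa L(1+o(1))}{\log L^2}.
\]
Substituting $L=\sqrt{(1-\eta)\log N\log\log N/\kappa}$ and $\log L^2\sim\log\log N$ gives
\[
\frac{\kappa L}{\log\log N}=\sqrt{\kappa(1-\eta)}\sqrt{\frac{\log N}{\log\log N}}.
\]

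\textbf{Conclusion and the $\mathbb S_{\rm poly}$ case.} Combining the tail bound and the main term yields the lemma. For the final claim, axiom (iv') together with Ramanujan gives $|a_F(p)|\le r$. Then \eqref{4th moment} follows from Chebyshev's bound, and multiplicativity holds by the Euler product. The main obstacle I expect is the careful bookkeeping in the tail estimate: the choice of $\alpha$ must make $\alpha\log N$ beat both $\alpha L\log\log M$ and the loss from the large-$|a_F(p)|$ terms, and this is where the fourth-moment hypothesis is genuinely used.
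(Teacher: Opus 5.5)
Your overall plan matches the paper's proof. Your $S_1,S_2$ are the paper's $D,P$, and you use the same Rankin exponent $(\log L)^{-3}$ and the same expansion of $\log(S_2/S_1)$, with the fourth moment controlling $\sum_p(u_pv_p+v_p^2)$. Your main-term computation is correct. The gap is in the tail bound.

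Write $u_p=|a_F(p)|^2f(p)^2$ and $v_p=|a_F(p)|^2f(p)/\sqrt p$. Your claim that ``the $f^2$ term is smaller'' is false. On the support $L^2\le p\le M=e^{(\log L)^2}$,
\[
\frac{u_p}{v_p}=\sqrt p\,f(p)=\frac{L}{\log p}\ \ge\ \frac{L}{(\log L)^2}\longrightarrow\infty ,
\]
so $u_p$ is the dominant term. Its contribution to the Rankin exponent is
\[
\sum_p (p^{\alpha}-1)\,u_p=(1+o(1))\,\alpha L^2\sum_{L^2\le p\le M}\frac{|a_F(p)|^2}{p\log p}=(1-\eta+o(1))\,\alpha\log N .
\]
This is of exactly the same order as your saving $\alpha\log N\asymp L^2/(\log L)^4$, not $O(L/(\log L)^2)$. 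So your bound $-\alpha\log N+O(L/(\log L)^2)$ for the exponent is wrong.

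The conclusion does survive, but only through the argument the paper uses, and you need three ingredients you did not supply:
\begin{enumerate}
\item The sharp estimate $p^{\alpha}-1\le(1+O(1/\log L))\,\alpha\log p$, which holds uniformly because $\alpha\log M=1/\log L$. A bare $p^\alpha-1\ll\alpha\log p$ with an unspecified constant is useless here.
\item The exact evaluation $\sum_p u_p\log p=(1-\eta+o(1))\log N$. This uses the normality asymptotic with the same $\kappa$ that appears in the definition of $L^2$.
\item The resulting bound $-(\eta+o(1))\,\alpha\log N\to-\infty$ on the exponent.
\end{enumerate}
This is precisely where $\eta>0$ is needed. Your tail argument never uses $\eta$. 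If $L^2$ were $c\log N\log\log N/\kappa$ with $c\ge 1$, the $u_p$ contribution would cancel or exceed the saving, and Rankin's trick would no longer show the tail is negligible.

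Relatedly, the fourth-moment hypothesis plays no role in the tail, since only $|a_F(p)|^2$-weighted sums occur there. It is needed only for the error terms in $\log(S_2/S_1)$, where you used it correctly.
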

	
	\begin{proof}
		Since the nonnegative multiplicative function $f$ is supported on squarefree integers and $a_F(n)$ is multiplicative, we have
		\[
		\sum_{nk\leq N}
		\frac{a_F(k)r(n)\overline{r(nk)}}{\sqrt{k}}
		=
		\sum_{\substack{nk\leq N\\(n,k)=1}}
		\frac{|a_F(k)|^2|a_F(n)|^2f(k)f(n)^2}{\sqrt{k}}
		=: S_N,
		\]
		which is a non-negative real sum. Removing the restriction $nk \leq N$, let
	\be\label{P and D}
	\begin{aligned}
		P
		&=
		\prod_p
		\left(
		1+|a_F(p)|^2f(p)^2
		+\frac{|a_F(p)|^2f(p)}{\sqrt{p}}
		\right)
		=
		\prod_p (1+u_p+v_p),\\
		D
		&=
		\prod_p
		\left(
		1+|a_F(p)|^2f(p)^2
		\right)
		=
		\prod_p (1+u_p),
	\end{aligned}
	\ee
	where
	\[
	u_p=|a_F(p)|^2f(p)^2,
	\qquad
	v_p=\frac{|a_F(p)|^2f(p)}{\sqrt{p}}.
	\]
		Note that $ \sum_{n\leq N} |r(n)|^2 \leq D$ and set $\rho=(\log L)^{-3} $.
		Rankin's trick gives
		\[
		0\leq P-S_N
		\leq
		N^{-\rho}
		\prod_p
		\left(
		1+ (u_p+v_p) p^\rho
		\right).
		\]
		Hence, we obtain
		\[
		\frac{P-S_N}{P}
		\leq
		\exp\left(
		-\rho\log N
		+
		\sum_p 
		(u_p+v_p)(p^\rho-1)
		\right).
		\]
		Since
		$
		\rho\log M=(\log L)^{-1}=o(1),
		$
		we have
		$
		p^\rho-1=(1+o(1))\, \rho\log p.$
		Partial summation gives
		\[
		\sum_{L^2\leq p\leq M}
		\frac{|a_F(p)|^2}{p\log p}
		=
		(\kappa+o(1))
		\left(
		\frac{1}{\log(L^2)}-\frac{1}{\log M}
		\right)
		=
		\frac{\kappa+o(1)}{2\log(L)}.
		\]
		Consequently,
		\[
		\begin{aligned}
			\sum_p
			|a_F(p)|^2
			\left(
			f(p)^2+\frac{f(p)}{\sqrt{p}}
			\right)\log p
			=
			(1-\eta+o(1))\log N.
		\end{aligned}
		\]
		Thus
		\[
		\frac{P-S_N}{P}
		\leq
		\exp\left(
		-(\eta+o(1))\rho\log N
		\right)
		=
		o(1).
		\]
		Hence
		$
		S_N\geq (1-o(1))P.$
      Now
	\[
	\begin{aligned}
		\log\frac{P}{D}
		&=
		\sum_p
		\log\left(
		1+
		\frac{v_p}{1+u_p}
		\right)
		=
		\sum_p
		\left(
		\frac{v_p}{1+u_p}
		+
		O\left(
		\left(
		\frac{v_p}{1+u_p}
		\right)^2
		\right)
		\right).
	\end{aligned}
	\]
Since 
   \[
  \left(  \frac{v_p}{1+u_p}  \right)^2 \ll v_p^2  \qquad \text{and} \qquad \frac{v_p}{1+u_p} = v_p + O(u_p v_p),
   \]
we get 
\begin{align*}
	\log\frac{P}{D}	&=  \sum_{p}v_p +O\left( \sum_p( u_pv_p +v_p^2) \right).
\end{align*}
Using \eqref{f(p)}, we have obtained 
\begin{align}\label{log P/D 4th mome}
		\log\frac{P}{D}	&= L\sum_{L^2 \leq p\leq M} \frac{|a_F(p)|^2}{p \log p} + O \left ( L^3  \sum_{L^2 \leq p\leq M} \frac{|a_F(p)|^4}{p^2 (\log p)^3} + L^2 \sum_{L^2 \leq p\leq M} \frac{|a_F(p)|^4}{p^2 (\log p)^2}    \right).
\end{align}
Using \eqref{normality conjectures}, \eqref{L^2}, and
partial summation, the main term of the above expression is
$$(\kappa+o(1))\frac{L}{2\log L}.$$
For the error term, we use the bound \eqref{4th moment}. Indeed, partial summation on  \eqref{4th moment} gives 
\[
\sum_{y\leq p} \frac{|a_F|^4}{p^2 (\log p)^j} \ll \frac{1}{y ( \log y) ^{j+1}} \qquad (j=1,2),
\]
so that the error term in \eqref{log P/D 4th mome} is \[
\ll \frac{L}{(\log L)^4}+ \frac{1}{(\log L)^3} =o\left( \frac{L}{\log L}\right)
\]
Hence
\begin{align*}
\log\frac{P}{D}	&=  (\kappa+o(1))\frac{L}{2\log L}\\
&= \left(\sqrt{\kappa(1-\eta)}+o(1)\right)
\sqrt{\frac{\log N}{\log\log N}},
\end{align*}
by using \eqref{L^2}. This completes the proof.
	\end{proof}

%	\normalcolor
	%%%%%%%%%%%%%%%

	The proof of Theorem \ref{thm:S1} is based on the following fundamental observation, which is analogous to the Proposition following Theorem 1 in \cite{mahatab2019large}. We do not include the  proof here since it  follows in exactly the same way as done  in \cite{mahatab2019large}. 
	
	\begin{prop}\label{mahatap prop}
		Let $F$ be an element of the Selberg class  $\mathbb{S}$ and $Z_F(t)$ denote  Hardy's $Z$-function corresponding to $F$. 
		Suppose there exists a non-negative function $K(t)$ and a function $A(T)$ such that 
		\begin{eqnarray*}
			A(T) \int_{T}^{2T} K(t) dt &\ll&  \int_{T}^{2T} F(1/2+it) K(t) dt, 
			\\
			\text{ and }  \int_{T}^{2T} Z_F(t) K(t) dt &=& o\left( A(T)  \int_{T}^{2T} K(t) dt \right), 
		\end{eqnarray*}
		then 	
		\[
		\max_{T\leq t\leq 2T} Z^+_F(t), \max_{T\leq t\leq 2T} Z^-_F(t) \gg A(T). 
		\]
	\end{prop}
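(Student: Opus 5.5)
The plan is to argue by contradiction, separately for the two signs, using the relation between $F(1/2+it)$ and $Z_F(t)$. By \eqref{deine hardy's ZF} we have $F(1/2+it)=Z_F(t)H(1/2+it)^{1/2}$, and $|H(1/2+it)|=1$. Therefore
\[
\left|\int_T^{2T}F(1/2+it)K(t)\,dt\right|\leq\int_T^{2T}|Z_F(t)|K(t)\,dt .
\]
Combined with the first hypothesis, this gives
\[
\int_T^{2T}|Z_F(t)|K(t)\,dt\geq c\,A(T)\int_T^{2T}K(t)\,dt
\]
for some constant $c>0$ and all large $T$. I would read the first hypothesis as a lower bound for the absolute value, or the real part, of the integral; only the absolute value is needed.

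Next I would split $|Z_F|=Z_F^++Z_F^-$ and $Z_F=Z_F^+-Z_F^-$. This gives
\[
\int_T^{2T}Z_F^{\pm}(t)K(t)\,dt=\frac12\left(\int_T^{2T}|Z_F|K\,dt\pm\int_T^{2T}Z_FK\,dt\right).
\]
By the second hypothesis, the signed integral is $o\bigl(A(T)\int_T^{2T}K\bigr)$. Hence for large $T$,
\[
\int_T^{2T}Z_F^{\pm}(t)K(t)\,dt\geq\Bigl(\frac c2-o(1)\Bigr)A(T)\int_T^{2T}K(t)\,dt .
\]

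Finally, since $K\ge0$, the left-hand side is at most $\max_{[T,2T]}Z_F^{\pm}\cdot\int_T^{2T}K$. Dividing by $\int_T^{2T}K$, which is positive because otherwise the hypotheses are vacuous, gives $\max_{T\le t\le2T}Z_F^{\pm}(t)\geq(c/4)A(T)$. Continuity of $Z_F$ on $[T,2T]$ guarantees the maximum exists.

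The only real subtlety is interpreting the first hypothesis. Since the integral of $F(1/2+it)K$ is complex, the hypothesis must be understood as bounding its modulus from below, and then the unimodularity of $H^{1/2}$ on the critical line carries the argument. The rest is elementary.
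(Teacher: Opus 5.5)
Your argument is correct and is essentially the paper's. The paper omits this proof, deferring to the second author's earlier work, but its proof of the analogous Proposition~\ref{New prop R(t) phi} uses exactly your two ingredients: the bound $\bigl|\int_T^{2T}F(1/2+it)K(t)\,dt\bigr|\le\int_T^{2T}|Z_F(t)|K(t)\,dt$, which comes from $|H(1/2+it)|=1$, and the identity $Z_F=|Z_F|-2Z_F^-$. The only difference is that the paper phrases it as a contradiction, assuming $\max Z_F^-$ is at most a small multiple of $A(T)$, whereas you average directly.
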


	The proof of Theorem \ref{thm:S2} is  based on the following proposition, which is similar to  the one given above.
	
	\begin{prop}\label{New prop R(t) phi}
		Let $F$ be an element of the Selberg class  $\mathbb{S}$ and $Z_F(t)$ denote Hardy's $Z$-function corresponding to $F$. Suppose there is a function $A(T)$ such that
		\begin{eqnarray*}
			A(T) \int_{T^{}}^{2T}|R(t)|^2 \Phi(t/T) \, dt &\ll&  \int_{T^{}}^{2T} F(1/2+it)|R(t)|^2 \Phi(t/T) \, dt,
			\\
			\text{ and }  \int_{T^{}}^{2T} Z_F(t) |R(t)|^2 \, dt  &=&  o\!\left( A(T) \int_{T^{}}^{2T} |R(t)|^2 \, dt \right), 
		\end{eqnarray*}
		then 	
		\[
		\max_{T\leq t\leq 2T} Z^+_F(t), \max_{T\leq t\leq 2T} Z^-_F(t) \gg A(T). 
		\]
		where  $R(t)$, $\Phi(t)$  are as  defined  above Proposition \ref{mahatap prop} and \(N=\lfloor T^{\alpha}\rfloor\) with $0<\alpha<1$.
	\end{prop}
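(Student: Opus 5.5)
We argue by contradiction, along the lines of the Proposition in \cite{mahatab2019large}. The key identity is
\[
F(1/2+it)=Z_F(t)\,H(1/2+it)^{1/2}, \qquad \bigl|H(1/2+it)^{1/2}\bigr|=1,
\]
so $|F(1/2+it)|=|Z_F(t)|=Z_F^+(t)+Z_F^-(t)$. We also use two facts about the weight: $0\le\Phi(t/T)\le 1$, and $\Phi(t/T)$ is supported in $[T,2T]$. Hence
\[
\int_T^{2T} |R(t)|^2\Phi(t/T)\,dt \le \int_T^{2T}|R(t)|^2\,dt .
\]
We will also need the reverse comparison
\[
\int_T^{2T}|R|^2\,dt \ll \int_T^{2T}|R|^2\Phi(t/T)\,dt .
\]
This follows from the mean value theorem for Dirichlet polynomials. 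Since $N\le T^{\alpha}$ with $\alpha<1$, the Montgomery--Vaughan theorem gives $\int_T^{2T}|R|^2\,dt=(T+O(N))\sum_{m\le N}|r(m)|^2$. Expanding the square and using $\widehat\Phi(y)\ll|y|^{-A}$ to kill the off-diagonal terms gives $\int|R|^2\Phi(t/T)\,dt=(\widehat\Phi(0)T+o(T))\sum|r(m)|^2$, with $\widehat\Phi(0)\ge 1/2$. Write $I:=\int_T^{2T}|R(t)|^2\,dt$.

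Now suppose $\max_{[T,2T]}Z_F^+(t)\le \delta A(T)$ for a small $\delta>0$ to be chosen; the case of $Z_F^-$ is symmetric. The plan has three steps.

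First, bound the signed moment against $|F|$. By hypothesis 1,
\[
A(T)\int|R|^2\Phi\,dt \ll \int|F||R|^2\Phi\,dt \le \int_T^{2T}(Z_F^++Z_F^-)|R|^2\,dt .
\]

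Second, eliminate $Z_F^-$. Since $Z_F^-=Z_F^+-Z_F$, hypothesis 2 gives
\[
\int Z_F^-|R|^2\,dt=\int Z_F^+|R|^2\,dt+o(A(T)\,I).
\]
Therefore
\[
A(T)\int|R|^2\Phi\,dt \ll 2\int Z_F^+|R|^2\,dt+o(A(T)\,I)\le 2\delta A(T)\,I+o(A(T)\,I).
\]

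Third, close the contradiction. By the reverse comparison, the left side is $\gg A(T)\,I$. Taking $\delta$ smaller than a fixed multiple of the implied constants yields a contradiction for large $T$. Hence $\max Z_F^{\pm}\gg A(T)$.

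The only genuinely nontrivial point is the comparability $I\asymp\int|R|^2\Phi(t/T)\,dt$. It is needed because hypothesis 1 carries the smooth weight while hypothesis 2 does not. This is exactly where $N=\lfloor T^\alpha\rfloor$ with $\alpha<1$ enters: for $m\ne n\le N$ we have $|\log(m/n)|\gg 1/N$, so $T|\log(m/n)|\ge T^{1-\alpha}\to\infty$ and the off-diagonal contributions are negligible. Everything else is the elementary sign-splitting argument above.
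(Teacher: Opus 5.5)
Your proof is correct and is essentially the paper's argument. The paper also combines hypothesis 1 with $|F(1/2+it)|=|Z_F(t)|$ and $0\le\Phi\le 1$, and gets $\int_T^{2T}|R|^2\,dt\ll\int_T^{2T}|R|^2\Phi(t/T)\,dt$ from the mean value theorem and Soundararajan's evaluation. It then reaches the contradiction through $Z_F=|Z_F|-2Z_F^-$ with $\max Z_F^-$ assumed small, which is the same sign-splitting as your $|Z_F|=2Z_F^+-Z_F$ with the roles of $Z_F^\pm$ swapped.
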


	\begin{proof}
		To prove the claim, observe that there exists a constant $C_1 > 0$ for which
		\begin{align}
			C_1 A(T) \int_T^{2T} |R(t)|^2 \Phi(t/T) \, dt \leq  \int_T^{2T}|Z_F(t)| |R(t)|^2 \Phi(t/T) \, dt \leq \int_T^{2T}|Z_F(t)| |R(t)|^2 \, dt . \label{C1 AT}
		\end{align}
		Since $R(t)=\sum_{n\leq N}r(n)n^{-it}$,
		by the mean-value theorem for Dirichlet polynomials
		(see \cite[Theorem~6.1]{Montgomery}), we have
		\begin{equation}\label{int Rt}
			\begin{aligned}
				\int_T^{2T}|R(t)|^2\,dt
				&=\left(T+O(N)\right)
				\sum_{n\leq N}|r(n)|^2\\
				&=T\left(1+O(T^{\alpha-1})\right)
				\sum_{n\leq N}|r(n)|^2\\
				&\ll T\sum_{n\leq N}|r(n)|^2,
			\end{aligned}
		\end{equation}
		where we have used $N\leq T^\alpha$ with
		$0<\alpha<1$. From  \cite[Eq.~(2)]{soundararajan2007extreme}, we have 
		\begin{align*}
			\frac{\int_{T}^{2T} |R(t)|^2 \, dt }{\int_{T^{}}^{2T}|R(t)|^2 \Phi(t/T) \, dt }  \ll \frac{ T \sum_{n \leq N} |r(n)|^2 }{T\widehat{\Phi}(0)(1+O(1/T)) \sum_{n\leq N} |r(n)|^2 } \ll 1.
		\end{align*}
		Hence, there exists a constant $C_2 >0$ such that $  \int_{T^{}}^{2T}|R(t)|^2 \Phi(t/T) \, dt  \geq C_2  \int_{T}^{2T} |R(t)|^2 \, dt  $ .
		
		Assume for contradiction that the claim is false and that
		\[
		\max_{T \leq t \leq 2T} Z_F^-(t) \leq \frac{C_1 C_2 A(T)}{3}.
		\]
		Then
		\begin{align*}
			\int_{T^{}}^{2T} Z_F(t) |R(t)|^2 \, dt  & = \int_{T^{}}^{2T} |Z_F(t)| |R(t)|^2 \, dt -\int_{T^{}}^{2T} 2Z^-_F(t) |R(t)|^2 \, dt  \\
			& \geq     C_1 A(T) \int_T^{2T} |R(t)|^2 \Phi(t/T) \, dt- \int_{T^{}}^{2T} 2Z^-_F(t) |R(t)|^2 \, dt \\
			& \geq \frac{C_1 C_2 A(T)}{3} \int_T^{2T} |R(t)|^2 \, dt,
		\end{align*}
		which contradicts the fact that  $\int_{T^{}}^{2T} Z_F(t) |R(t)|^2 \, dt$ is  $ o\!\left( A(T) \int_{T^{}}^{2T} |R(t)|^2 \, dt \right) $. This proves our claim for $Z^-_F(t)$. Similarly  we can argue for $ Z^+_F(t).$
	\end{proof}

	\section{Proof of Theorem \ref{thm:S1} }\label{section proof of thm 1}

	Let $F$ be an element of the Selberg class of degree $1$. We know that the  class $\mathbb{S}_1$ consists of the Riemann zeta function $\zeta(s)$ and the shifted Dirichlet $L$-functions
	$
	L_\theta(s,\chi):=L(s+i\theta,\chi).
	$
	Since the desired result is already known for the Riemann zeta function by \eqref{Mahatab results}, it remains to consider the latter case in the proof of Theorem \ref{thm:S1}.

	Let $L(s,\chi)$ denote the Dirichlet $L$-function associated with a primitive Dirichlet character $\chi$ of conductor $q$.
	From the functional equation for $L(s, \chi)$, we have  
	\begin{align}\label{fneq4}
		L(s+i\theta , \chi) = \psi(s+i\theta , \chi) L(1-s-i\theta, \overline{\chi}), 
	\end{align}
	with  
	\begin{align*}
		\psi(s, \chi) = \mathfrak{m}(\chi) \left(\frac{\pi}{q}\right)^{s-1/2} \frac{\Gamma\big( (1-s+\mathfrak{a}) /2 \big)}
		{\Gamma\big( (s+\mathfrak{a}) /2 \big)}, 
	\end{align*}
	where $\mathfrak{a} = (1-\chi(-1))/2$ measures the parity of the character $\chi$, $\mathfrak{m}(\chi) = \tau(\chi)/ (i^{\mathfrak{a}}\sqrt{q} )$ and $\tau(\chi) = \sum_{a \pmod q} \chi(a)e(a/q)$ is the Gauss sum; $\mathfrak{m}(\chi)$ is the  root number. 
	Comparing \eqref{fneq4} with \eqref{fun eq F=SF bar}, we see that Hardy's $Z$-function corresponding to $L(s+i\theta,\chi)$ is  
	\begin{align}
		Z_\theta(t,\chi) &:= L(1/2+it+i\theta, \chi) \psi(1/2+it+i\theta, \chi)^{-1/2},  
	\end{align}
	
	that is, a translate of Hardy's function $Z(t,\chi)$ corresponding to $L(s,\chi)$. Thus for our purpose, it suffices to work with $Z(t,\chi)$. 
	The following lemma gives the approximate functional equation for $Z(t,\chi)$.

	\begin{lem} \label{lem:AFE Z chi}
		For fixed $\chi$ and $t \to \infty$, we have 
		\begin{equation}
			Z(t,\chi)  =  2\Re\left\lbrace  c_\chi  \sum_{n \le \sqrt{ q |t|/2\pi  }} \frac{\chi(n)}{\sqrt{n}}  \exp\left( i t\left( \frac{1}{2}\log\left(  \frac{qt}{2\pi e} \right) - \log n \right)\right) \right\rbrace +O(|t|^{-1/4}), 
		\end{equation}
		where $c_\chi= 
		\mathfrak{m}(\chi)^{-1/2} e^{(\pi i \mathfrak{a}/ 4) - (\pi i/8)  }$. 
	\end{lem}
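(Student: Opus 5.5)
We derive Lemma~\ref{lem:AFE Z chi} from the standard approximate functional equation for $L(s,\chi)$ on the critical line, with the two sums of equal length, combined with Stirling's formula for $\psi(1/2+it,\chi)^{-1/2}$.

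\emph{Approximate functional equation.} The first input is the Hardy--Littlewood type approximate functional equation for primitive $\chi$, taking $x=y=\sqrt{qt/2\pi}$. For fixed $\chi$ and $t\to\infty$ it reads
\[
L(1/2+it,\chi)=\sum_{n\le x}\frac{\chi(n)}{n^{1/2+it}}+\psi(1/2+it,\chi)\sum_{n\le x}\frac{\overline{\chi}(n)}{n^{1/2-it}}+O(t^{-1/4}).
\]
This is due to Lavrik, or can be obtained by the Riemann--Siegel saddle-point method, as in Titchmarsh's treatment of $\zeta$ carried over to characters. The error $O(t^{-1/4})$ is the analogue of the error in the Hardy--Littlewood equation for $\zeta(1/2+it)$ with $x=y=\sqrt{t/2\pi}$.

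\emph{Multiplying by $\psi^{-1/2}$.} Write $\psi(1/2+it,\chi)^{-1/2}=:e^{i\vartheta}$, using the branch that defines $Z(t,\chi)$. Since $|\psi(1/2+it,\chi)|=1$, we have $\psi^{-1/2}=e^{i\vartheta}$ and $\psi^{1/2}=e^{-i\vartheta}$. Multiplying the displayed equation by $e^{i\vartheta}$ gives
\[
Z(t,\chi)=e^{i\vartheta}\sum_{n\le x}\frac{\chi(n)}{\sqrt n}\,n^{-it}+e^{-i\vartheta}\sum_{n\le x}\frac{\overline{\chi}(n)}{\sqrt n}\,n^{it}+O(t^{-1/4}).
\]
The second term is the complex conjugate of the first, so
\[
Z(t,\chi)=2\Re\Big(e^{i\vartheta}\sum_{n\le x}\chi(n)n^{-1/2-it}\Big)+O(t^{-1/4}).
\]

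\emph{Stirling expansion of the phase.} It remains to expand $e^{i\vartheta}$. Apply Stirling's formula (as in \eqref{ratio of gamma}) to
\[
\frac{\Gamma((1/2-it+\mathfrak a)/2)}{\Gamma((1/2+it+\mathfrak a)/2)}
\]
together with the factor $(\pi/q)^{it}$. The result is
\[
\psi(1/2+it,\chi)=\mathfrak m(\chi)\exp\Big(-it\log\frac{qt}{2\pi e}-\frac{\pi i}{2}\big(\mathfrak a-\tfrac12\big)+O(1/t)\Big).
\]
Taking the inverse square root,
\[
e^{i\vartheta}=\mathfrak m(\chi)^{-1/2}\,e^{\pi i\mathfrak a/4-\pi i/8}\,\exp\Big(\tfrac{it}{2}\log\frac{qt}{2\pi e}\Big)\big(1+O(1/t)\big),
\]
so the constant in front is exactly $c_\chi$. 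The $O(1/t)$ factor multiplies a sum of size at most $\sum_{n\le x}n^{-1/2}\ll t^{1/4}$. Its contribution is therefore $O(t^{-3/4})$, which is absorbed into $O(t^{-1/4})$. For negative $t$ one uses conjugation symmetry, but only $t\to+\infty$ is needed here.

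\emph{Main obstacle.} The main care point is keeping the constant phase straight. One has to check that the branch of $\psi^{-1/2}$ chosen in the Stirling expansion matches the one defining $Z(t,\chi)$; any ambiguity is only a global sign, which can be absorbed into the choice of square root of $\mathfrak m(\chi)$. One also has to verify that the $O(t^{-1/4})$ error in the approximate functional equation holds uniformly for fixed $q$. For that I would simply cite the classical result rather than reprove it.
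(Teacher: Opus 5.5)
Your proposal is correct and follows the paper's own proof: the paper likewise uses the approximate functional equation with $x=y=\sqrt{q|t|/2\pi}$ (citing Chandrasekharan--Narasimhan, Eq.~(44), instead of Lavrik), multiplies it by $\psi(1/2+it,\chi)^{-1/2}$, and then applies the Stirling expansion \eqref{ratio of gamma}. Your write-up fills in steps the paper leaves implicit. These are the observation that the two terms are complex conjugates, the explicit phase computation that yields $c_\chi$, and the absorption of the $O(1/t)$ Stirling error as $O(t^{-3/4})$.
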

	%  \begin{proof}
		%  This follows from Lemma 2 of \cite{mawia2017distribution}. 
		%\end{proof} 

		%\normalcolor
		
		%\textbf{
			%	------- the above Lemma 3.1 we can proof by using Chandrasekharan and  Narasimhan  (1963) paper \cite{chandra1963} ------
			%}
		
		%\color{blue}
		\begin{proof}
			From  \cite[Eq.~(44)]{chandra1963} with $x=y=\sqrt{q|t|/{2\pi}}$, we have 
			\begin{equation*}
				L\left(\frac12+it,\chi\right)=\sum_{n \leq \sqrt { q|t|/{2 \pi}}} \frac{\chi(n )}{n^{\frac12+it}}+ \psi\left(\frac12+it, \chi\right) \sum_{n \leq \sqrt { q|t|/{2 \pi}}} \frac{\overline{\chi}(n)}{n^{\frac12-it}} +O(|t|^{-1/4}).
			\end{equation*} 
			Therefore,
			\begin{align*}
				L\left(\frac12+it,\chi\right)	\psi\left(\frac12+it, \chi\right)^{-1/2}  & = \psi\left(\frac12+it, \chi\right)^{-1/2}  \sum_{n \leq \sqrt { q|t|/{2 \pi}}} \frac{\chi(n )}{n^{\frac12+it}}\\
				&+\psi\left(\frac12+it, \chi\right)^{1/2} \sum_{n \leq \sqrt { q|t|/{2 \pi}}} \frac{\overline{\chi}(n)}{n^{\frac12-it}} +O(|t|^{-1/4}).
			\end{align*}
			Using \eqref{ratio of gamma} for the ratio of Gamma factors in $\psi\left(\frac12+it,\chi\right)$, the above equation completes the proof.
		\end{proof}
		\normalcolor

		Let us consider 
		\begin{equation}\label{I1}
			I_1 = I_1(\chi, R,T) := \int_\mathbb R Z(t,\chi) |R(t)|^2 \,\Phi(t/T)\, dt, 
		\end{equation}
		and 
		\begin{equation}\label{I2}
			I_2=I_2(\chi, R, T) := \int_\mathbb R L(1/2+it, \chi) |R(t)|^2 \, \Phi(t/T) \, dt.  
		\end{equation}
		
		We now establish the following estimates for $I_1$ and $I_2$.
		
		\begin{lem}\label{lem:I1 bound}
			For sufficiently large $T$ and \(N=\lfloor T^{1/4}\rfloor\), we have 
			\[
			I_1 \ll T \sum_{n\le N} |r(n)|^2. 
			\]
		\end{lem}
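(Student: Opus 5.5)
Substitute the approximate functional equation of Lemma~\ref{lem:AFE Z chi} into $I_1$ and expand $|R(t)|^2=\sum_{m,n\le N} r(m)\overline{r(n)}(n/m)^{it}$. The error term $O(|t|^{-1/4})$ contributes at most
\[
\int_T^{2T} t^{-1/4}|R(t)|^2\,dt \ll T^{-1/4}\cdot T\sum_{n\le N}|r(n)|^2
\]
by \eqref{int Rt}, which is acceptable. The main term equals $2\Re$ of
\[
c_\chi\sum_{m,n\le N} r(m)\overline{r(n)}\sum_{k}\frac{\chi(k)}{\sqrt k}\int_{\mathbb R}\Phi(t/T)\,\mathbb{1}_{k\le\sqrt{qt/2\pi}}\, e^{i f(t)}\,dt,
\qquad
f(t)=\frac t2\log\frac{qt}{2\pi e}-t\log k+t\log\frac nm ,
\]
together with its conjugate companion (the conjugate of $c_\chi\cdots$ times $|R|^2$). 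The companion has phase $-\tfrac t2\log\frac{qt}{2\pi e}+t\log k+t\log\frac nm$ and is handled symmetrically. The indicator only constrains $t$ for $k\asymp\sqrt{qT}$. To avoid edge effects, I would restrict the $t$-integral to the range where $k\le\sqrt{qt/2\pi}$, which is an interval $[t_k,2T]$ with $t_k=2\pi k^2/q$.

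Next, locate the stationary point. We have
\[
f'(t)=\tfrac12\log\frac{qt}{2\pi}-\log k+\log\frac nm,
\qquad
f''(t)=\frac1{2t}.
\]
Hence $f'$ vanishes at $t_0=\frac{2\pi}{q}\bigl(\frac{km}{n}\bigr)^2$. Since $m,n\le N=T^{1/4}$ and $k\le\sqrt{qT}$, a stationary point in $[T,2T]$ requires $km/n\asymp\sqrt{qT}$, and then the constraint $k\le\sqrt{qt_0/2\pi}=km/n$ forces $m\ge n$.

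For the terms with $t_0$ inside the support, apply Lemma~\ref{lemma three deriv expon} with $\lambda_2\asymp 1/T$, $\lambda_3\asymp T^{-2}$ and $g(t)=\Phi(t/T)$, so that $G=1$ and $G_1\ll 1/T$. The main term is $\Phi(t_0/T)\sqrt{4\pi t_0}\,e^{i\pi/4+if(t_0)}\ll\sqrt T$. The error terms are $O(1)+O(1)+O(1)+O(\min(1/|f'(a)|+1/|f'(b)|,\sqrt T))$. Each $k$ comes with weight $k^{-1/2}$, and given $(m,n)$ there is at most a range of $k$ of length $\asymp\sqrt T\,n/m$.

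I expect this stationary-phase bookkeeping to be the main obstacle. The main term summed over $k$ in a window is at most
\[
\sqrt T\sum_{k\asymp\sqrt T n/m}k^{-1/2}\ll \sqrt T\,(\sqrt T n/m)^{1/2}.
\]
Naively this is too large, so I must use that $t_0$ is determined by $k$. I would instead evaluate $f(t_0)$ explicitly: one finds $f(t_0)=-t_0$, so the phases $e^{-i t_0}$ oscillate in $k$. More efficiently, one can simply bound the trivial sum by $\sum_{m,n}|r(m)r(n)|\sqrt{n/m}\,T^{3/4}$. Since only the $m\ge n$ terms survive and $\sqrt{n/m}\le1$, I would write this as $T^{3/4}\sum_{m,n\le N}|r(m)r(n)|$. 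Cauchy–Schwarz gives $\sum_{m,n\le N}|r(m)r(n)|\le N\sum_{n\le N}|r(n)|^2$, so with $N=T^{1/4}$ the total is $\ll T\sum_{n\le N}|r(n)|^2$. This is exactly why $N=\lfloor T^{1/4}\rfloor$ is chosen.

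The non-stationary $k$ are handled by Lemma~\ref{lemma tith 1st der} (with $g=\Phi(t/T)$ monotone on pieces). This gives $\ll 1/|f'|$, and summing $k^{-1/2}/|\log(k m/(n\sqrt{qt/2\pi}))|$ over $k\le\sqrt{qT}$ yields $O(T^{1/4}\log T)$ for each pair $(m,n)$. Combined with the same Cauchy–Schwarz factor $N$, this is again $\ll T\sum|r(n)|^2$. The endpoint contributions from truncating at $t_k$ are bounded the same way via the $\min(\cdot,\lambda_2^{-1/2})$ term. The conjugate companion has $f''=-1/(2t)$, and its stationary point requires $m\le n$; it is treated identically. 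Collecting all pieces gives $I_1\ll T\sum_{n\le N}|r(n)|^2$.
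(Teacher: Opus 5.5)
Your argument reaches the right bound, but by a more elaborate route than the paper's, and the extra machinery does no real work.

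The paper never separates stationary from non-stationary $k$. For every triple $(m,n,k)$ it applies the second-derivative test (Lemma~\ref{lem:sec der test}) to the pure exponential integral over subintervals of $[\max(T,2\pi k^2/q),2T]$, using only $f''(t)\gg 1/T$. It then brings in $\Phi(t/T)$ by integration by parts, which gives $\ll\sqrt T$ uniformly in $(m,n,k)$. Summing $k^{-1/2}$ over all $k\le\sqrt{qT/\pi}$ gives $T^{3/4}$ per pair. Finally $\sum_{m,n\le N}|r(m)r(n)|\le N\sum_{n\le N}|r(n)|^2$ with $N=\lfloor T^{1/4}\rfloor$ finishes the proof.

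Your decisive estimate is the same count, restricted to the stationary window. You bound the stationary-phase main term trivially by $\sqrt T$ and never use the oscillation of $e^{if(t_0)}$ (incidentally $f(t_0)=-t_0/2$, not $-t_0$). So Lemma~\ref{lemma three deriv expon} buys nothing here. Your remark that this naive bound is ``too large'' is also mistaken: it is exactly what is needed, as your next sentence shows. Your route could only pay off if you exploited cancellation over $k$ in the explicit main terms $\Phi(t_0/T)\sqrt{4\pi t_0}\,e^{i\pi/4-it_0/2}$, which is what one would need to go beyond $N=T^{1/4}$. As written, you do not do this.

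The detour also introduces inaccuracies, all of which the uniform $\sqrt T$ bound repairs at no cost.

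\emph{Hypotheses on the amplitude.} Lemmas~\ref{lemma tith 1st der} and~\ref{lemma three deriv expon} impose monotonicity and finiteness conditions that $g=\Phi(t/T)$ is not assumed to satisfy. $\Phi$ is only smooth, and $g''$ vanishes identically on $(5T/4,7T/4)$, so it does not have a bounded number of zeros. This is why the paper estimates the pure exponential integral first and integrates by parts afterwards.

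\emph{The non-stationary per-pair bound.} Your claimed bound $O(T^{1/4}\log T)$ per pair is false as stated, for two reasons.
First, for $k$ adjacent to an endpoint $k^*\in\{\frac nm\sqrt{qT/2\pi},\frac nm\sqrt{qT/\pi}\}$ of the stationary window, $\min|f'|$ on the range of integration is about $|k-k^*|/k^*$. This can be arbitrarily small, so the first-derivative bound must be capped by $\sqrt T$.
Second, for $m<n$ the best lower bound is $f'\ge\log(n/m)$, attained at $t=t_k$ when $t_k\ge T$. The first-derivative test therefore only yields $\ll T^{1/4}/\log(n/m)$ per pair, which is as large as $T^{1/4}N$ when $n=m+1\asymp N$.

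All of these contributions still sum to $\ll T\sum_{n\le N}|r(n)|^2$ once every integral is bounded by $\sqrt T$. But that uniform bound is precisely the paper's proof, and with it the stationary/non-stationary split becomes unnecessary.
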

		\begin{proof}
			The expression for $M_1$ following \cite[Eq.~(14)]{aistleitner2017large} gives
			\begin{align*}
				T^{-1/4}\int_\mathbb{R} |R(t)|^2 \Phi(t/T) dt &= T^{3/4} \hat{\Phi}(0) (1+O(1/T)) \sum_{n\le N} |r(n)|^2 
				\\
				&\ll T^{3/4}  \sum_{n\le N} |r(n)|^2. 
			\end{align*}
			Combining this with Lemma \ref{lem:AFE Z chi}, one  sees that it is enough to prove the desired upper bound for 
			\begin{align}
				& \int_\mathbb{R} c_\chi  \sum_{k \le \sqrt{ q t/2\pi  }} \frac{\chi(k)}{\sqrt{k}}  \exp\left( i t\left( \frac{1}{2}\log\left(\frac{qt}{2\pi e}\right)  - \log k \right)\right) |R(t)|^2 \, \Phi(t/T) \,dt \nonumber
				\\
				& \ll \sum_{n,m \le N} |  r(n)r(m) |  \sum_{k \le \sqrt{qT/\pi}} \frac{1}{\sqrt{k}}  \left |   \int_{\max(T, 2 \pi k^2 / q)}^{2T} \exp\left( i t\left( \frac{1}{2}\log\left( \frac{qt}{2\pi e} \right)  \right)\right) \left( \frac{m}{nk}\right)^{it} \Phi(t/T) \,dt              \right |.  
				\label{expr for I1}
			\end{align}
			The same second-derivative estimate below holds uniformly on every subinterval of $[T,2T]$, including
			$[a_k,2T]$, where $a_k= \max(T, 2 \pi k^2 / q)$. 
			Assume that, for $T\leq y\leq 2T$,
			\[
			\mathcal{I}(y)= \int_{T}^{y} \exp\left( i t\left( \frac{1}{2}\log\left( \frac{qt}{2\pi e} \right)  \right)\right) \left( \frac{m}{nk}\right)^{it}  \,dt.    
			\]
			Now, observing that 
			\[
			\frac{d^2}{dt^2} \left( t\left( \frac{1}{2}\log\left( \frac{qt}{2\pi e} \right) 
			+ \log \left ( \frac{m}{nk} \right) \right)\right) \gg \frac{1}{t} \gg \frac{1}{T}, 
			\]
			for $T\le t\le y $. So,   we apply Lemma \ref{lem:sec der test}  to obtain $\mathcal{I}(y) \ll \sqrt{T}$.
			Applying integration by parts gives the following estimate, and replacing the lower endpoint $T$ by $a_k$ gives the same bound uniformly in $k$.
			\begin{align*}
				& \int_{T}^{2T}  \exp\left( i t\left( \frac{1}{2}\log\left( \frac{qt}{2\pi e} \right)  \right)\right) \left( \frac{m}{nk}\right)^{it} \Phi(t/T) \,dt \\
				& =  \left [  \Phi (t/T) \mathcal{I}(t) \right ]_{T}^{2T} - \frac{1}{T}\int_T^{2T} \mathcal{I}(t) \Phi'(t/T) \,dt \\
				& \ll \sqrt{T} +  \sqrt{T} \int_1^2 | \Phi'(x) | \,dx \ll \sqrt{T},
			\end{align*}
			where we have used the fact that  $\Phi$ is a smooth function compactly supported on $[1,2]$,  and hence  $ \int_1^2 | \Phi'(x) |  \,dx <\infty$.
			Therefore, from \eqref{expr for I1}, we have 
			\begin{align*}
				I_1 &  \ll \sqrt{T} \sum_{n,m \le N} |  r(n)r(m) |  \sum_{k \le \sqrt{qT/\pi}} \frac{1}{\sqrt{k}} \\
				& \ll N T^{3/4} \sum_{n \leq N}|r(n)|^2 \ll T \sum_{n \leq N}|r(n)|^2,
			\end{align*}
			by using \(N=\lfloor T^{1/4}\rfloor\). This completes the proof.
		\end{proof}

		\begin{lem}\label{lem:I2 bound}
		%	\normalcolor
		%	\color{blue}
			For sufficiently large $T$ and \(N=\lfloor T^{\alpha}\rfloor\), we have 
			\[
			I_2 \gg T \exp \left( (\sqrt{1-\eta}+o(1)) \sqrt { \frac{\log N  } { \log\log N  } } \right) \sum_{n \le N} |r(n)|^2. 
			\]
		%	\normalcolor
		\end{lem}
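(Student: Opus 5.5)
We follow Soundararajan's resonance computation for $I_2$, as in Aistleitner--Pa\'nkowski, with $\kappa=1$ since $|\chi(p)|^2=1$ for $p\nmid q$. The plan has three steps: replace $L(1/2+it,\chi)$ by a short Dirichlet polynomial, expand against $|R(t)|^2$, and keep only the diagonal contribution.

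\textbf{Step 1 (approximation).} We use the approximate functional equation in the form quoted in the proof of Lemma~\ref{lem:AFE Z chi}, with $x=T^{1-\epsilon'}$ and $y=qt/(2\pi x)$. Choose $\epsilon'$ small but larger than the $\epsilon$ in $\alpha=1-\epsilon$. Then for $t\in[T,2T]$,
\[
L(1/2+it,\chi)=\sum_{k\le x}\chi(k)k^{-1/2-it}+\psi(1/2+it,\chi)\sum_{k\le y}\overline{\chi}(k)k^{-1/2+it}+O(x^{-1/2}+\ldots).
\]
Alternatively, one can work directly from the Dirichlet series at $\Re s=1+\delta$ and shift the contour to $\Re s=1/2$; there is no pole since $\chi$ is primitive and nonprincipal.

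\textbf{Step 2 (first sum).} Insert $|R(t)|^2=\sum_{m,n\le N}r(m)\overline{r(n)}(n/m)^{it}$. The integral of the first sum is
\[
T\sum_{k\le x}\sum_{m,n\le N}\frac{\chi(k)r(m)\overline{r(n)}}{\sqrt k}\,\widehat{\Phi}\!\left(T\log\frac{km}{n}\right).
\]
Terms with $km=n$ give $T\widehat\Phi(0)\sum_{mk\le N}\chi(k)r(m)\overline{r(mk)}/\sqrt k$. Terms with $km\ne n$ satisfy $|\log(km/n)|\gg 1/\max(km,n)\ge T^{-1+\epsilon'}\cdot N^{-1}$. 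This is where care is needed: since $km\le xN$ may exceed $T$, we instead bound via $|\log(km/n)|\gg 1/n \ge T^{-\alpha}$ when $km\neq n$, which holds because $km$ and $n$ are integers and $n\le N$. Hence $\widehat\Phi\ll T^{-A(1-\alpha)}$, and trivially summing over $k,m,n$ (using $|r(m)|\le$ something bounded times $\sum|r|^2$ via Cauchy--Schwarz, with $N^{2}x$ factor) gives a negligible contribution by taking $A$ large. By the multiplicativity argument in the proof of Lemma~\ref{lemma >=exp()}, the diagonal equals $T\widehat\Phi(0)S_N$. Lemma~\ref{lemma >=exp()} applies with $\kappa=1$; the fourth moment bound is trivial since $|a_F(p)|\le1$, and normality holds by the prime number theorem in progressions. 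It gives $S_N\ge \exp((\sqrt{1-\eta}+o(1))\sqrt{\log N/\log\log N})\sum|r(n)|^2$.

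\textbf{Step 3 (dual sum, the main obstacle).} Using the Stirling form of $\psi$ as in \eqref{ratio of gamma}, the integrand becomes $\sum_{k\le y}k^{-1/2}\overline\chi(k)(m/(nk))^{it}e^{-it\log(qt/(2\pi e))}\Phi(t/T)$. The phase derivative is $\log(km/(n))-\log(qt/2\pi)$. Since $km\le yN\le qT^{\epsilon'+\alpha}/\pi$ is much smaller than $qt/2\pi$ once $\epsilon'<\epsilon$... this conflicts with the requirement on $\epsilon'$ in Step 2. I would therefore take $x=T$ exactly and $y\asymp q$ bounded, so that $y$ is bounded and $km\le N\cdot O(1)\le T^{1-\epsilon/2}$. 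Then the derivative is $\gg \epsilon\log T$, with no stationary point. Repeated integration by parts (or Lemma~\ref{lemma tith 1st der} with smooth $\Phi$) gives $O(T^{-A})$ per term, so the total is $\ll N^2T^{-A}\sum|r|^2$, which is negligible. The error term of the approximate functional equation contributes $\ll T^{1-1/2}\cdot\ldots\le o(T)\int|R|^2$-type bounds, via $\int|R|^2\Phi\asymp T\sum|r|^2$.

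Combining the three steps yields the lemma.
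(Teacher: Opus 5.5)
Your argument is correct in substance, but it approximates $L(\tfrac12+it,\chi)$ differently from the paper. The paper does not use a sharp approximate functional equation here. It quotes Aistleitner--Pa\'nkowski's expansion (their (14)), which amounts to replacing $L(\tfrac12+it,\chi)$ by the smoothed sum $\sum_{k}\chi(k)k^{-1/2-it}e^{-k/X}$, with $X=T^{1+\epsilon}$ and truncation $k\le T^{1+2\epsilon}$. The cost is an error $O\bigl(\int|R(t)|^2\Phi(t/T)\,dt\bigr)=O\bigl(T\sum_{n\le N}|r(n)|^2\bigr)$; your passing remark about shifting the contour from $\Re s=1+\delta$ is essentially this route.

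Since that smoothed sum is longer than the analytic conductor $\asymp qT$, no dual sum appears. The off-diagonal terms are removed exactly as in your Step~2, via $|\log(nk/m)|\gg 1/N$ and the decay of $\widehat\Phi$. The diagonal is bounded below using $e^{-k/X}\ge 1/2$ for $k\le N$ and the nonnegativity of the diagonal terms, and then Lemma~\ref{lemma >=exp()} is applied.

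Your sharp cutoff gives the diagonal as exactly $T\widehat\Phi(0)S_N$. You pay for this with Step~3, a non-stationary-phase estimate for the dual sum, which works because $yN$ stays a power of $T$ below $qt/2\pi$. The paper's route transfers verbatim to degree $2$ (with $X=T^{2+\epsilon}$, as in Section~\ref{section proof thm 2}). Yours would need a degree-two approximate functional equation and a separate dual analysis there, but in exchange it avoids the smoothing machinery.

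Several details should be tidied.

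\textbf{The conflict between Steps~2 and~3 is illusory.} With the separation $|\log(km/n)|\gg 1/n\ge 1/N$, Step~2 only needs $x\ge N$, so that the diagonal $km=n\le N$ is complete; that is, $\epsilon'\le\epsilon$. This is compatible with Step~3's requirement $\epsilon'<\epsilon$. So $x=T^{1-\epsilon'}$ with $0<\epsilon'<\epsilon$ already works, and then the remainder $O(x^{-1/2}+y^{-1/2})$ is $o(1)$.

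\textbf{The remainder is not $o$ for your final choice.} With $x=T$, $y\asymp q$, the remainder is only $O(1)$. It therefore contributes $O\bigl(T\sum_{n\le N}|r(n)|^2\bigr)$. This is harmless, because the main term carries the divergent factor $\exp\bigl(c\sqrt{\log N/\log\log N}\bigr)$, but it is not the ``$o$'' you wrote.

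\textbf{The dual terms are not $O(T^{-A})$.} Since $y=qt/(2\pi x)$ depends on $t$, the $k$-th dual term lives only on $[\max(T,2\pi xk/q),2T]$, so repeated integration by parts does not give $O(T^{-A})$. Instead, apply the first-derivative test (Lemma~\ref{lemma tith 1st der}), then integrate by parts against $\Phi(t/T)$ as in the proof of Lemma~\ref{lem:I1 bound}. This gives $O(k^{-1/2}/\log T)$ per term, hence a total of $\ll (T/x)^{1/2}N\sum_{n\le N}|r(n)|^2/\log T$, which is still negligible.

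\textbf{Normality needs only the prime number theorem.} The case $\kappa=1$ follows from the prime number theorem alone, since $|\chi(p)|^2=1$ for $p\nmid q$; progressions are not needed.
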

		\begin{proof}
			Choose $X= T^{1+\epsilon}$ for some positive $\epsilon$. 
			Using $(14)$ of  \cite{aistleitner2017large}, we have  
			\begin{align}
				I_2 &:= \int_\mathbb R L(1/2+it, \chi) |R(t)|^2 \,  \Phi(t/T) \,  dt 
				\nonumber\\
				&=  
				T \sum_{n,m\le N} \sum_{k\le T^{1+2\epsilon}  }
				\frac{\chi(k) r(n)\overline{r(m) } }{\sqrt{k}} e^{-k/X} \hat{\Phi}\big( T \log(nk/m)\big) + O\left( \left| \int_\mathbb{R} |R(t)|^2 \, \Phi(t/T) ~dt \right| \right).\label{I2=T..}
			\end{align}
			The error term above can be shown to be (see (2) in \cite{soundararajan2007extreme}) 
			\[
			\ll T \sum_{n \le N}|r(n)|^2.
			\]
			For the contribution from off-diagonal terms $nk\ne m$, we use $\left |\log(nk/m)  \right |\gg \frac{1}{N}$ to see that 
			\[
			\hat{\Phi}\left( T \log\left( \frac{ nk}{m}\right)\right) \ll_B \left(\frac{T}{N}\right)^{-B}.
			\]
			Since $N\leq T^{\alpha}$, $(\alpha<1)$, we obtain that the off-diagonal contribution to \eqref{I2=T..} is 
			\[
			\ll N \sum_{n\le N} |r(n)|^2  \sum_{k\le T^{1+2\epsilon}  }
			\frac{|\chi(k)|}{\sqrt{k}} T^{-A}  \ll T^{-A}\sum_{n \le N}|r(n)|^2,
			\]
			for any $A>1$. Hence, 
			\begin{align*}
				I_2 = T\hat{\Phi}(0) \sum_{nk=m \le N} \frac{\chi(k) r(n)       \overline{r(m)}}{\sqrt k} e^{-k/X} + O\bigg( T\sum_{n \le N}|r(n)|^2\bigg). 
			\end{align*}
			As $e^{-k/X} \ge 1/2$ for $k< T^{1-\epsilon}$ and $ r(n)\overline{r(nk)} = |r(n)|^2\overline{ r(k)  }  \,  \mathbbm{1}_{( n,k)=1}      = |\chi(n)|^2 \overline {\chi(k)} f(n)^2f(k)  \,  \mathbbm{1}_{( n,k)=1}  $, 
			applying Lemma \ref{lemma >=exp()}, we have 
			\begin{align*}
				\sum_{nk=m \le N} \frac{\chi(k) r(n)    \overline{r(m)}}{\sqrt k} e^{-k/X}
				\gg
				\exp \left( ((\sqrt{1-\eta}+o(1))+o(1)) \sqrt {  \frac{ \log N  } { \log\log N } } \right) \sum_{n \le N} |r(n)|^2.  
			\end{align*} 
			This completes the proof. 
		\end{proof}

		\begin{proof}[Proof of Theorem~\ref{thm:S1}]
		%	\normalcolor
		%	\color{blue}
			Set $N=\lfloor T^{1/4}\rfloor$ and given $\epsilon>0$, choose $\eta>0$ sufficiently small that $ \sqrt{(1-\eta)}/2> 1/2-\epsilon$. %\normalcolor
			  Then
			$$
			\sqrt{\frac{\log N}{\log\log N}}
			=
			\left(\frac12+o(1)\right)
			\sqrt{\frac{\log T}{\log\log T}}.
			$$
			Using Lemma \ref{lem:I1 bound}, along with the estimate \cite[Eq.~(2)]{soundararajan2007extreme} 
			\begin{equation}
				\int_\mathbb{R} |R(t)|^2 \Phi(t/T) ~dt = T\hat{\Phi}(0) (1+O(1/T)) \sum_{n\le N}|r(n)|^2, 
			\end{equation}
			we have 
			\begin{equation*}
				\begin{aligned}
					\frac{I_1} { A(T) \int_T^{2T}|R(t)|^2 \Phi(t/T) \, dt }  \ll& \frac{T \sum_{n\leq N}|r(n)|^2}{\,T \,\sum_{n \leq N} |r(n)|^2 \left(1 + O\left(1/T\right)\right) \, A(T)}\\
					\ll& \frac{1}{\,A(T) }\quad \to 0  \quad \text{as} \quad T \to \infty,
				\end{aligned}
			\end{equation*}
			where 
			$$ A(T)={\exp\left(\left ( \frac{1}{2}-\epsilon \right)\sqrt{\frac{\log T}{\log \log T}}\right) }. $$
			Hence, 
			\begin{equation}\label{I1=o}
				I_1=o\left( A(T) \, \int_T^{2T}|R(t)|^2 \Phi(t/T) \, dt \right).
			\end{equation}
			Therefore, from Lemma \ref{lem:I2 bound} and equation \eqref{I1=o}, we see that  hypotheses of Proposition \ref{mahatap prop} are satisfied. Hence, the proof is complete.
		\end{proof}

		\section{Proof of Theorem \ref{thm general}}\label{section proof thm 2}
		We have 
		\begin{align*}
			Z_F(t)  :=  F\left(1/2+it\right) \, H^{-1/2}(1/2+it) .\label{Hardy type Z-function}
		\end{align*} 
		Let us consider 
		\[
		I_F=I_F(F,R,T):= \int_{\mathbb{R}}\, F(1/2+it) \, |R(t)|^2 \, \Phi(t/T)\, dt
		\]
		and 
		\be\label{define JF}
		J_F:=\int_{T}^{2T} Z_F(t)\, |R(t)|^2 \, dt.
		\ee

		We now prove the following estimates for $I_F$ and $J_F$.
		\begin{lem} \label{lemma  IF lower for F}
			Let $F (s)\in\mathbb{S}_{\mathrm{poly}}$ be a function of degree $2$
			satisfying \eqref{normality conjectures}. Then for sufficiently large $T$ and \(N=\lfloor T^{\alpha}\rfloor\), we have 
			\[
			I_F \gg T \exp \left((\sqrt{\kappa( 1-\eta)}+o(1)) \sqrt { \frac{ \log N}{\log \log N} } \right) \sum_{n\leq N}|r(n)|^2.
			\]
		\end{lem}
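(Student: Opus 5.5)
We will follow the proof of Lemma~\ref{lem:I2 bound}, replacing $\chi(k)$ by $a_F(k)$ and the degree-one bound on $F$ by Lemma~\ref{estimate of Fs interval}.

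\textbf{Step 1: a smoothed Dirichlet series.} Fix a small $\epsilon>0$ and set $X=T^{2+\epsilon}$. For $\Re s=1/2$ consider
\[
F_X(s)=\frac{1}{2\pi i}\int_{(c)}F(s+w)\Gamma(w)X^w\,dw=\sum_{k\ge1}\frac{a_F(k)}{k^s}e^{-k/X},\qquad c=1/2+\epsilon .
\]
Shift the contour to $\Re w=-1/2-\delta$. Since $F\in\mathbb{S}_2$ has at most a pole at $s=1$, we pick up the residue $F(s)$ at $w=0$. A possible pole of $F$ at $s+w=1$ contributes only $O(X^{1/2}e^{-cT})$ for $t\asymp T$, because of the decay of $\Gamma$; this is negligible.

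On the new line, Lemma~\ref{estimate of Fs interval} gives $F(-\delta+i(t+v))\ll (|t|+|v|)^{1+2\delta}$. Hence the remaining integral is
\[
\ll X^{-1/2-\delta}\,T^{1+2\delta}\ll T^{-\epsilon/2}
\]
for small $\delta$. Therefore $F(1/2+it)=F_X(1/2+it)+O(T^{-\epsilon/2})$ uniformly for $t\in[T,2T]$, and the error contributes $O\left(\int|R|^2\Phi(t/T)\,dt\right)=O\left(T\sum|r(n)|^2\right)$.

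\textbf{Step 2: integrate against the resonator.} Insert $F_X$ and expand $|R(t)|^2=\sum_{n,m}r(n)\overline{r(m)}(m/n)^{it}$. This gives
\[
T\sum_{n,m\le N}\sum_k\frac{a_F(k)r(n)\overline{r(m)}}{\sqrt k}e^{-k/X}\,\widehat\Phi\big(T\log(nk/m)\big).
\]
The sum over $k>T^{2+2\epsilon}$ is negligible because of the factor $e^{-k/X}$.

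For the off-diagonal terms $nk\ne m$ with $k\le T^{2+2\epsilon}$:
\begin{itemize}
\item If $nk\le 2N$, then $|\log(nk/m)|\gg 1/N$.
\item If $nk>2N$, then $|\log(nk/m)|\ge\log 2$.
\end{itemize}
In both cases $\widehat\Phi(T\log(nk/m))\ll (T/N)^{-B}$ for any $B$. Bounding $|a_F(k)|\ll k^{\epsilon}$ by the Ramanujan hypothesis and using Cauchy--Schwarz in $n,m$ (at a cost of a factor $N$), the total off-diagonal contribution is
\[
\ll N\,T^{(2+2\epsilon)(1/2+\epsilon)}\,T^{-B(1-\alpha)}\sum|r(n)|^2=o\Big(\sum|r(n)|^2\Big),
\]
once $B$ is taken large, since $\alpha<1$.

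\textbf{Step 3: the diagonal.} The diagonal terms $m=nk$ give
\[
T\widehat\Phi(0)\sum_{nk\le N}\frac{a_F(k)r(n)\overline{r(nk)}}{\sqrt k}e^{-k/X}.
\]
As in the proof of Lemma~\ref{lemma >=exp()}, every summand is nonnegative: it equals $|a_F(k)|^2|a_F(n)|^2f(k)f(n)^2k^{-1/2}$ on coprime pairs. Moreover $k\le N<X$, so $e^{-k/X}\ge e^{-1}$. Thus the diagonal is
\[
\ge e^{-1}\,T\widehat\Phi(0)\,S_N .
\]
Apply Lemma~\ref{lemma >=exp()}. Its hypothesis \eqref{4th moment} follows from $|a_F(p)|\le 2$, which holds for $F\in\mathbb{S}_{\mathrm{poly}}$ of degree $2$ (the paper asserts $r=2$; in general $|a_F(p)|\le r$ suffices). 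This yields the factor $\exp\left((\sqrt{\kappa(1-\eta)}+o(1))\sqrt{\log N/\log\log N}\right)$.

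The $O\left(T\sum|r(n)|^2\right)$ errors from Steps 1 and 2 are of lower order than this main term, which gives the lemma.

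\textbf{Main obstacle.} The delicate point is Step 1. Because $F$ has degree $2$, the convexity bound is only $t^{1-\sigma+\delta}$, so $X$ must exceed $T^{2}$ to make the contour error small. Consequently the $k$-sum runs well beyond $N$. I expect the rapid decay of $\widehat\Phi$, together with $N\le T^{\alpha}$, $\alpha<1$, to absorb the resulting polynomial losses, as computed in Step 2.
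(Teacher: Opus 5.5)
Your proposal is correct and follows the same route as the paper, which simply reruns the proof of Lemma~\ref{lem:I2 bound} with $r(n)=a_F(n)f(n)$ and $X=T^{2+\epsilon}$. That proof has three ingredients: a smoothed Dirichlet series (the paper cites Aistleitner--Pa\'nkowski's formula, whereas you derive it by a contour shift using the degree-two convexity bound), off-diagonal terms removed by the decay of $\widehat{\Phi}$, and the nonnegative diagonal handled by Lemma~\ref{lemma >=exp()}, with \eqref{4th moment} following from $|a_F(p)|\le r$. One minor slip is that your off-diagonal bound drops the overall factor $T$ coming from $T\widehat{\Phi}(T\log(nk/m))$; this is harmless, since $B$ can be taken as large as needed.
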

		
		\begin{proof}
			The proof of this lemma  follows the same lines as that of Lemma \ref{lem:I2 bound}, replacing $r(n)=\chi (n)f(n)$ and  $X=T^{1+\epsilon}$ by
			$r(n)=a_F(n)f(n)$ and $X=T^{2+\epsilon}$, respectively. 
		\end{proof}

		\begin{lem}\label{Lemma upper bound of JF}
			Let $F (s)\in\mathbb{S}_{\mathrm{poly}}$ be a function of degree $2$
			satisfying Hypothesis~\ref{hyp} and $ \Im \xi_F =0$. Then for sufficiently large $T$, we have 
			\[
			J_F = o\left( T \, \sum_{n \leq N}|r(n)|^2\right),
			\]
			where  $N=\lfloor T^{\alpha}\rfloor$ with  $0<\alpha<1-\gamma$.
		\end{lem}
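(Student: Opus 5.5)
Expand $|R(t)|^2=\sum_{m,n\le N} r(m)\overline{r(n)}(n/m)^{it}$, so that
\[
J_F=\sum_{m,n\le N} r(m)\overline{r(n)}\int_T^{2T} Z_F(t)\Big(\frac{n}{m}\Big)^{it}dt .
\]
It then suffices to show that each inner integral is $\ll T^{1-\delta'}$ uniformly in $m,n\le N$ for some $\delta'>0$. Indeed, Cauchy--Schwarz gives $\sum_{m,n}|r(m)r(n)|\le N\sum_{n}|r(n)|^2$, so we need $N T^{1-\delta'}=o(T)$. The condition $\alpha<1-\gamma$ is what should make this work.

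For the inner integral I will follow the classical Hardy/Chandrasekharan--Narasimhan contour shift rather than an approximate functional equation, since the Voronoi error is too large in degree $2$. By \eqref{Hs -1/2} with $\Im\xi_F=0$, $Z_F(t)(n/m)^{it}$ is the restriction to $s=\tfrac12+it$ of
\[
G(s)=F(s)\,\omega_2 (Q_o t)^{s-1/2}e^{-it}(n/m)^{s-1/2}\,(1+O(1/t)).
\]
More precisely, I take the analytic function $F(s)\,\omega_2\,(Q_o(s-\tfrac12)/i)^{s-1/2}e^{-(s-1/2)}(n/m)^{s-1/2}$, whose size on the critical line matches.

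Write $\int_T^{2T}$ as $\frac1i\int_{1/2+iT}^{1/2+2iT}$ and move the contour to $\Re s=1+\delta$. The horizontal segments are controlled by \eqref{esti H -1/2 and F}, which gives a contribution $\ll T^{1/2+\delta}(N)^{1/2+\delta}$; this is admissible since $N\le T^{\alpha}$ with $\alpha<1/2$, using $\gamma\ge 1/2$ from the earlier remark. The $O(1/t)$ correction is of lower order by the same convexity bound, giving $\ll T^{1/2+\delta}$ overall.

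On $\Re s=1+\delta$ the Dirichlet series converges absolutely, and we get
\[
\sum_k a_F(k)k^{-1-\delta}\int_T^{2T}(Q_o t)^{1/2+\delta+it}e^{-it}\Big(\frac{n}{mk}\Big)^{1/2+\delta+it}dt .
\]
The phase is $f(t)=t\log(Q_o t)-t+t\log(n/(mk))$. It satisfies $f''=1/t\asymp 1/T$, and it has a stationary point at $c=mk/(nQ_o)$. Stationarity in $[T,2T]$ requires $k\asymp TnQ_o/m$, while for other $k$, Lemma~\ref{lemma tith 1st der} gives a small contribution.

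For the stationary $k$, I apply Lemma~\ref{lemma three deriv expon} with $\lambda_2\asymp 1/T$, $\lambda_3\asymp 1/T^2$, $G\asymp T^{1/2+\delta}(n/(mk))^{1/2+\delta}$ and $|f''(c)|^{-1/2}\asymp T^{1/2}$. The main term is
\[
\sqrt{2\pi}\,g(c)c^{1/2}e^{i\pi/4+if(c)}, \qquad f(c)=-c=-mk/(nQ_o),
\]
and this yields
\[
\ll \sum_{k\asymp Tn/m}\frac{a_F(k)}{k^{1+\delta}}\,k^{1/2+\delta}\,(\ldots)\,e\!\Big(-\frac{mk}{2\pi nQ_o}\Big).
\]
This is exactly where Hypothesis~\ref{hyp} enters. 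By partial summation over $k$ in a range $K\asymp Tn/m$, with the linear phase $e(kx)$, $x=-m/(2\pi nQ_o)$, the main term is $\ll K^{\gamma-1/2}(\log K)^{\gamma_1}$ times a weight of size $(n/m)^{1/2}T^{1/2}$. This comes to roughly $T^{\gamma}N^{1-\gamma}(\log T)^{\gamma_1}$.

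The error terms of Lemma~\ref{lemma three deriv expon} are $O(T^{1/2+\delta}\cdot\text{small})$ per $k$, weighted by $\sum|a_F(k)|k^{-1-\delta}(\ldots)$, so they contribute $T^{1/2+\epsilon}$. Summing over $m,n$ with the factor $N$ gives
\[
J_F\ll\big(NT^{\gamma}N^{1-\gamma}(\log T)^{\gamma_1}+NT^{1/2+\epsilon}\big)\sum_n|r(n)|^2 .
\]
A careful accounting, keeping the diagonal-type weighting $|r(m)r(n)|(n/m)^{1/2}$, should reduce the leading term to $T^{\gamma+\alpha}(\log T)^{\gamma_1}=o(T)$ precisely because $\alpha<1-\gamma$.

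The main obstacle is this last bookkeeping. The exponent in $(n/m)$ must not destroy the gain, and the endpoint terms $\min(1/|f'(a)|+1/|f'(b)|,\lambda_2^{-1/2})$ must be handled for $k$ near the edges of the stationary range, summed against $|a_F(k)|$. My first attempt would be to bound $\sum_{m,n}|r(m)r(n)|\sqrt{n/m}$ by using $|r(m)r(n)|\le\frac12(|r(m)|^2+|r(n)|^2)$ after splitting on $n\le m$ versus $n>m$. I would also smooth the $k$-range so that the third-derivative lemma applies with $c$ strictly interior.
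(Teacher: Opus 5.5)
Your strategy is the paper's: shift to $\Re s=1+\delta$, apply the first-derivative test off the stationary range and Lemma~\ref{lemma three deriv expon} on it, use Hypothesis~\ref{hyp} for the resulting exponential sum, then sum over $m,n$. The gap is that the last step is not carried out, and it is exactly where the range $\alpha<1-\gamma$ is earned. The bounds you do write down fall short of that range.

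\textbf{The per-pair bound.} Your amplitude bookkeeping on $\Re s=1+\delta$ is off. Writing $(n/(mk))^{1/2+\delta+it}$ next to $k^{-1-\delta}$ counts $k^{-1/2-\delta}$ twice, and you then do a partial summation against a $k^{-1/2}$ weight that is not there. The correct amplitude is $g(t)=(Q_o t\,n/m)^{1/2+\delta}$, with no $k$ in it. With $c=mk/(nQ_o)$ this gives $g(c)=k^{1/2+\delta}$ and $|f''(c)|^{-1/2}=c^{1/2}$. Hence $k^{-1-\delta}g(c)|f''(c)|^{-1/2}=(m/(nQ_o))^{1/2}$, which is independent of $k$. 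So the stationary contribution is $(m/n)^{1/2}$ times the unweighted sum $\sum_{Q_oTn/m<k<2Q_oTn/m}a_F(k)e(-mk/(2\pi Q_o n))$. No partial summation is needed, and Hypothesis~\ref{hyp} gives the per-pair bound $T^{\gamma}(n/m)^{\gamma-1/2}(\log T)^{\max(\gamma_1,0)}$, not the $T^{\gamma}N^{1-\gamma}$ you state.

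\textbf{The sum over $(m,n)$.} This sum must keep the weights $(n/m)^a$, and neither of your devices does.
\begin{itemize}
\item A bound uniform in $(m,n)$ followed by $\sum_{m,n}|r(m)r(n)|\le N\sum|r(n)|^2$ loses a power of $N$. Your horizontal-segment bound $T^{1/2+\delta}N^{1/2+\delta}$ becomes $T^{1/2+\delta}N^{3/2+\delta}$, which is $o(T)$ only for $\alpha<1/3$, not for all $\alpha<1/2$ as you claim. So ``each inner integral $\ll T^{1-\delta'}$ for some $\delta'>0$'' is not the right target; you would need $\delta'>\alpha$.
\item The AM--GM step $|r(m)r(n)|\le\frac12(|r(m)|^2+|r(n)|^2)$ against a weight $(n/m)^a$ with $a>0$ puts the coefficient $\frac12\sum_{n\le N}n^a\asymp N^{1+a}$ on $|r(1)|^2$. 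So it only yields $N^{1+a}\sum|r(n)|^2$. With $a=\gamma-1/2$ this gives $T^{\gamma}N^{\gamma+1/2}$ for the main terms, which is too weak when $\gamma>1/2$.
\end{itemize}

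\textbf{What works.} As in the paper, factor the double sum and apply Cauchy--Schwarz to each factor:
\[
\sum_{m,n\le N}|r(m)r(n)|\Big(\frac nm\Big)^{a}
=\Big(\sum_{n\le N}|r(n)|n^{a}\Big)\Big(\sum_{m\le N}|r(m)|m^{-a}\Big)
\le\Big(\sum_{n\le N}n^{2a}\sum_{m\le N}m^{-2a}\Big)^{1/2}\sum_{n\le N}|r(n)|^2 .
\]
This is $\ll N\sum|r(n)|^2$ for $|a|<1/2$, and $\ll N^{1+\delta}\sum|r(n)|^2$ for $a=1/2+\delta$.
\begin{itemize}
\item Taking $a=\gamma-1/2\in[0,1/2)$, which is where $\gamma<1$ enters, gives $T^{\gamma}N(\log T)^{\max(\gamma_1,0)}$ for the main terms.
\item The horizontal segments and the remaining first-derivative and error terms are $\ll T^{1/2+\delta}\max(1,n/m)^{1/2+\delta}$. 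Taking $a=0$ and $a=1/2+\delta$ turns them into $T^{1/2+\delta}N^{1+\delta}$.
\end{itemize}
Both are $o(T)$ once $\alpha<1-\gamma$ and $\delta<(1/2-\alpha)/(1+\alpha)$.

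Your use of an explicit analytic approximant in place of $H^{-1/2}(s)$ is a harmless variant of the paper's direct use of \eqref{Hs -1/2}.
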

		
		\begin{proof}
			Using the definition of $R(t)$ 	in \eqref{resonator}, we write 
			\[
			J_F =  \sum_{m,n \leq N}  r(n)\overline{r(m)}\,K_{m,n} \qquad \text{(say)},
			\]
			where
			\[
			K_{m,n}:= \int_{T}^{2T} Z_F(t)  \left(\frac mn \right)^{it} \, dt .
			\]
			As $Z_F(t) \in \mathbb{R}$, we get $\overline{K}_{m,n} = K_{n,m} .$ 	Therefore, the contribution to $J_F$ from the non-diagonal terms for which $m\neq n$ is
			\begin{align*}
				& \sum_{m>n} r(n)\overline{r(m)}\, K_{m,n} + \sum_{n>m} r(n)\overline{r(m)}\, K_{m,n}\\
				= &  \sum_{m>n} r(n)\overline{r(m)}\, K_{m,n} +  \overline{  \sum_{n>m} r(m) \overline{r(n)} \, K_{n,m} } \\
				= & 2 \Re{ \left (  \sum_{m>n} r(n) \overline{r(m)}\, K_{m,n}   \right)}.
			\end{align*}
			Hence, 
			\[
			J_F =  \sum_{n \leq N} |r(n)|^2 K_{n,n} +   2 \Re{ \left (  \sum_{1\leq n < m \leq N} r(n) \overline{r(m)}\, K_{m,n}   \right)}.
			\]	
			Therefore, 
			\begin{align}
				|J_F|  \, & \leq  \,   2   \sum_{1\leq n \leq m \leq N} \left |  r(n) r(m) \right | | K_{m,n}|
				\nonumber \\   
				\, &  \leq \,  2  \sum_{1\leq n \leq m \leq N}    \frac{ \left |  r(n) r(m) \right |}{\sqrt{{m}/{n}}} \left |   \int_{T}^{2T} F\left( \frac12+it \right ) H^{-\frac12}\left (  \frac12+it  \right)  \left(\frac mn \right)^{\frac12+it}  dt \right|\label{JF}.
			\end{align}	
			We now turn to establishing an upper bound for the integral
			\begin{align*}
				I_{m,n} & =\int_{T}^{2T} F\left( \frac12+it \right ) H^{-\frac12}\left (  \frac12+it  \right)  \left(\frac mn \right)^{\frac12+it}  dt\\
				& = -i \int_{\frac{1}{2}+i T}^{\frac{1}{2}+i2T} F(s)  H^{-\frac12}(s)  \left(\frac mn \right)^{s}  ds.
			\end{align*}
			We apply Cauchy's theorem to estimate the integral 
			\[
			\int_{\mathcal{R}}  F(s) H^{-\frac12}(s) \left( \frac{m}{n} \right)^s  ds,
			\]
			where $\mathcal{R}$ denotes the rectangle with vertices at
			$\frac{1}{2} + iT$, $1+\delta + iT$, $1+\delta + i2T$, and $\frac{1}{2} + i2T$.
			Here $\delta > 0$  is a small constant to be specified later. By Cauchy's theorem, we have 
			\begin{align}\label{I=J1 J2 J3}
				I_{m,n}=-i (J_1+J_2+J_3),
			\end{align}
			where we have dropped the $m,n$ suffix for each $J_i$ for ease of notation, and 
			\begin{align*}
				J_1 &= \int_{1+\delta}^{\frac{1}{2}} F\left( \sigma +i2T \right ) H^{-\frac12}\left (  \sigma+i2 T \right)  \left(\frac mn \right)^{\sigma+i2T}  d\sigma,  \\ 
				J_2  &= i \int_{T}^{2T} F\left(1+\delta +it \right ) H^{-\frac12}\left ( 1+\delta+it \right)  \left(\frac mn \right)^{1+\delta+it}  dt, \\
				J_3 &= \int_{\frac{1}{2}}^{1+\delta} F\left( \sigma +iT \right ) H^{-\frac12}\left (  \sigma+i T \right)  \left(\frac mn \right)^{\sigma+iT}  d\sigma.
			\end{align*}
			We now first estimate $J_3$. By using the estimate given in \eqref{esti H -1/2 and F}, we have 
			\begin{align}\label{J3 error}
				J_3\ll T^{\frac{1}{2}+\delta} \times
				\begin{cases}
					1& \text{if } m = n, \\
					\left( \dfrac{m}{n} \right)^{1+\delta} & \text{if } m > n.
				\end{cases} 
				\qquad  \ll  T^{\frac{1}{2}+\delta} \left( \dfrac{m}{n} \right)^{1+\delta}.
			\end{align}
			The bound for $J_1$ follows similarly to that for $J_3,$ giving the same estimate as in \eqref{J3 error}. We now estimate $J_2$. Because of the asymptotic formula \eqref{Hs -1/2}, $J_2$ is a constant multiple of  
			\begin{align}\label{J2= c1} 
				\left(\frac{m}{n}\right)^{1+\delta}  \int_{T}^{2T} \sum_{k=1}^{\infty} \frac{a_F(k)}{k^{1+\delta+it}} \left( Q_ot \right)^{\frac 12 +\delta+it} e^{-it} \left(\frac{m}{n}\right)^{it}  (1+O(1/t))\, dt.
			\end{align}
			The error term in \eqref{J2= c1} gives 
			
			\begin{equation}\label{J2 first error}
				\left(\frac{m}{n}\right)^{1+\delta}  \int_{T}^{2T} O(t^{\delta-1/2}) \, dt = O\left( \left(\frac{m}{n}\right)^{1+\delta}   T^{\frac12+\delta}\right).
			\end{equation}
			Thus we need to estimate 
			\begin{align}
				\left(\frac{m}{n}\right)^{1+\delta} \sum_{k=1}^\infty \frac{a_F(k)}{k^{1 + \delta}} 
				\int_T^{2T} g(t) e^{i f(t)} dt.\label{main integral of I}
			\end{align}
			where $g(t)=t^{\frac{1}{2}+\delta}$ and $f(t)=t \log \left( \frac{Q_ot  m/n}{ek}\right).$

			\noindent
			We now  partition the sum in \eqref{main integral of I} according to the following intervals:
			\[
			1\leq k\leq Q_oT \frac mn- 1, \quad 
			Q_oT\frac mn+1 \leq k\leq 2Q_o T\frac mn - 1, \quad 
			2Q_o T\frac mn + 1 \leq k,
			\]
			with $S_1$, $S_2$, and $S_3$ denoting the sums over these intervals, respectively. We will use the Lemma \ref{lemma tith 1st der} to estimate $S_1$,$S_3$ and to estimate $S_2$ we will use Lemma \ref{lemma three deriv expon}.
			At most four integers are not covered by the intervals considered above. Any such $k$ lies in the range
			$T\frac{m}{n}\ll k\ll T\frac{m}{n},$ and satisfies
			$a_F(k)\ll k^{\epsilon}.$ Therefore, their 
			contribution to \eqref{main integral of I} is 
			\begin{align}
				O\left(  \left(\frac{m}{n}\right)^{1+\delta}  \frac{|a_F(k)|}{k^{1 + \delta}} T^{\frac{1}{2}+\delta}T\right) 
				= O\left(    \left(\frac{m}{n}\right)^{1+\delta}  \frac{k^{\epsilon}}{k^{1+ \delta}} T^{\frac{1}{2}+\delta}T   \right) 
				= O\left(T^{\frac{1}{2}+\epsilon}  \left(\frac mn\right)^{\epsilon}\right).\label{O term of I2}
			\end{align}
			We next estimate the contribution from $S_1$ using
			Lemma~\ref{lemma tith 1st der}. We note that
			$f'(t)=\log\left(\frac{Q_0tm/n}{k}\right)
			\gg \log\left(\frac{Q_0Tm/n}{k}\right)$ and
			$g(t)\asymp T^{\frac12+\delta}$.
			If necessary, we split $[T,2T]$ into two subintervals so that the required monotonicity condition for $f'/g$ is satisfied. Hence, Lemma~\ref{lemma tith 1st der} gives
			\begin{align}\label{S1}
				S_1   \ll   \left(\frac{m}{n}\right)^{1+\delta}   \sum_{1\leq k \leq TQ_o \frac{m}{n} - 1} \frac{|a_F(k)|}{k^{1+\delta}}  \frac{T^{\frac{1}{2}+\delta}}{\log\left(\frac{TQ_o \frac mn}{ k }\right)}.  
			\end{align}
			We split the above sum once again into two parts:
			\begin{align*}
				1\leq k \leq \frac{Q_oTm/n}{2} \quad \text{and} \quad  \frac{Q_oTm/n}{2}<k \leq Q_oTm/n-1,
			\end{align*}
			and denote them by $S_{11}$ and $S_{12}$, respectively.
			We first consider $S_{11}$. For
			$1\leq k\leq \frac{Q_0Tm/n}{2}$, we have
			$\log\left(\frac{Q_0Tm/n}{k}\right)\geq \log 2$.
			Hence, we obtain
			
			\begin{align}\label{S11}
				S_{11}\ll   \left(\frac{m}{n}\right)^{1+\delta} T^{\frac12+\delta} \sum_{1\leq k \leq \frac{Q_oTm/n}{2}} \frac{|a_F(k)|}{k^{1+\delta}} \ll  \left(\frac{m}{n}\right)^{1+\delta} T^{\frac12+\delta},
			\end{align}
			as $\sum_{k=1}^{\infty} a_F(k)k^{-s}$ converges absolutely for $\Re(s)>1$.

			Now, we estimate $S_{12}$, using the inequality  $\log \frac{Q_oTm/n}{k} \geq \frac{Q_oTm/n-k}{Q_oTm/n}$ for  $ \frac{Q_oTm/n}{2}< k \leq Q_oTm/n-1$ and $a_F(k)=O(k^{\epsilon})$,  we have
			\begin{equation}\label{S12}
				\begin{aligned}
					S_{12} \ll & \left(\frac{m}{n}\right)^{1+\delta} T^{\frac12+\delta} \sum_{\frac{Q_oTm/n}{2}<k \leq Q_oTm/n-1} \frac{|a_F(k)|}{k^{1+\delta}}\frac{Q_oTm/n}{Q_oTm/n-k} \\
					\ll &  \left(\frac{m}{n}\right)^{1+\epsilon}  T^{\frac12+\epsilon}  \sum_{\frac{Q_oTm/n}{2}<k \leq Q_oTm/n-1} \frac{1}{Q_oTm/n-k}.
				\end{aligned}
			\end{equation}
			Estimating the sum in \eqref{S12}, we observe that
			\begin{align*}
				\sum_{\frac{Q_oTm/n}{2}<k \leq Q_oTm/n-1} \frac{1}{Q_oTm/n-k}= \sum_{1\leq l< \frac{Q_oTm/n}{2} } \frac{1}{l+g}  \ll\log (Tm/n),
			\end{align*}
			where $g$ is the fractional part of $Q_oTm/n$. Using this, from \eqref{S12}, we have 
			\begin{align}\label{final esti S12}
				S_{12} \ll  \left(\frac{m}{n}\right)^{1+\epsilon}  T^{\frac12+\epsilon}\log (Tm/n).
			\end{align}
			Therefore,  from  \eqref{S11} and \eqref{final esti S12}, we obtain that 
			\begin{align}\label{final est S1}
				S_1\ll \left(\frac{m}{n}\right)^{1+\delta} T^{\frac12+\delta}+\left(\frac{m}{n}\right)^{1+\epsilon}  T^{\frac12+\epsilon}\log (Tm/n).
			\end{align}
			The term $S_3$ is handled in the same way as $S_1$. In this case, we apply Lemma~\ref{lemma tith 1st der} together with the bound
			$|f'(t)|\gg \log\left(\frac{k}{2Q_0Tm/n}\right)$.
			Hence, by the same argument, we obtain
			\be \label{final est S3}
			S_3\ll \left(\frac{m}{n}\right)^{1+\delta} T^{\frac12+\delta}+\left(\frac{m}{n}\right)^{1+\epsilon}  T^{\frac12+\epsilon}\log (Tm/n).
			\ee
			Finally, we estimate the sum $S_2$ by using Lemma \ref{lemma three deriv expon}. Here note that $c=k/ {( Q_om/n)},\lambda_2=1/T,\lambda_3=1/T^2,G=T^{1/2+\delta}$ and $G_1=T^{-1/2+\delta}$. Hence, we have 
			\beas \label{sum S2}
			S_2 &=& c_1 \sum_{TQ_o\frac mn+1 \leq k\leq 2TQ_o \frac mn - 1} a_F(k) e^{-\frac{ik}{Q_om/n}}+ O \left( \left(\frac{m}{n}\right)^{1+\delta} T^{\frac12+\delta}  \sum_{TQ_o\frac mn+1 \leq k\leq 2TQ_o \frac mn - 1}\frac{|a_F(k)|}{k^{1+\delta}} \right) \\      
			&+& O\left( \left(\frac{m}{n}\right)^{1+\delta} T^{\frac12+\delta}  \sum_{TQ_o\frac mn+1 \leq k\leq 2TQ_o \frac mn - 1}\frac{|a_F(k)|}{k^{1+\delta}}\left( \frac{1}{\log (\frac{k}{Q_oTm/n})} + \frac{1}{\log (\frac{2Q_oTm/n}{k})} \right)\right), 
			\eeas
			where $c_1$ is a constant.
			
			The absolute convergence of
			$\sum_{k=1}^{\infty} a_F(k)k^{-s}$ for $\Re(s)>1$
			shows that the first error term in $S_2$ is
			$O\left(\left(\frac{m}{n}\right)^{1+\delta}T^{1/2+\delta}\right).$
			The second error term can be treated in the same way as $S_{12}$, which gives
			$  
			O\left(
			\left(\frac{m}{n}\right)^{1+\varepsilon}
			T^{1/2+\varepsilon}
			\log(Tm/n)
			\right).$ 
			Thus, it remains only to bound the exponential sum occurring in $S_2$.
			We have
			\begin{equation}
				S_2 = c_1 \sum_{TQ_o\frac mn+1 \leq k\leq 2TQ_o \frac mn - 1} a_F(k) e^{-\frac{ik}{Q_om/n}} +  O \left( \left(\frac{m}{n}\right)^{1+\delta} T^{\frac12+\delta}\right)+  O\left( \left(\frac{m}{n}\right)^{1+\epsilon}  T^{\frac12+\epsilon}\log (Tm/n) \right).
			\end{equation}
			Hence, using the estimate of $S_1,S_2, \text{ and }S_3$, we get
			\begin{equation}
				J_2 \ll \left | \sum_{TQ_o\frac mn+1 \leq k\leq 2TQ_o \frac mn - 1} a_F(k) e^{-\frac{ik}{Q_om/n}} \right |+   \left(\frac{m}{n}\right)^{1+\delta} T^{\frac12+\delta}+  \left(\frac{m}{n}\right)^{1+\epsilon}  T^{\frac12+\epsilon}\log (Tm/n).
			\end{equation}
			Using the estimate of $J_1,J_2 \text{  and } J_3$, from \eqref{I=J1 J2 J3}  we have
			\begin{equation}\label{bound for I}
				I_{m,n} \ll  \left | \sum_{TQ_o\frac mn+1 \leq k\leq 2TQ_o \frac mn - 1} a_F(k) e^{-\frac{ik}{Q_om/n}} \right | + \left(\frac{m}{n}\right)^{1+\delta} T^{\frac12+\delta}+     \left(\frac{m}{n}\right)^{1+\epsilon}  T^{\frac12+\epsilon}\log (Tm/n).
			\end{equation}
			Therefore, from \eqref{JF}, with
			$Q_0=\sqrt{q_F}/(2\pi)$, we obtain
			\begin{equation}\label{Final JF}
				\begin{aligned}
					J_F \ll \sum_{1\leq n \leq m \leq N}
					\frac{|r(m)r(n)|}{\sqrt{\frac{m}{n}}}
					\Bigg(
					&\left|
					\sum_{TQ_o\frac{m}{n}+1 \leq k\leq 2TQ_o\frac{m}{n}-1}
					a_F(k)e^{-\frac{2\pi i k}{\sqrt{q_F}m/n}}
					\right| \\
					&+\left(\frac{m}{n}\right)^{1+\delta}T^{\frac12+\delta}
					+\left(\frac{m}{n}\right)^{1+\epsilon}
					T^{\frac12+\epsilon}\log(Tm/n)
					\Bigg).
				\end{aligned}
			\end{equation}
			Next, we use Hypothesis~\ref{hyp} to estimate the exponential sum in \eqref{Final JF}. We have
			\[
			\sum_{TQ_0\frac{m}{n}+1\leq k\leq 2TQ_0\frac{m}{n}-1}
			a_F(k)e^{-\frac{2\pi i k n}{m\sqrt{q_F}}}
			\ll
			\left(T\frac{m}{n}\right)^{\gamma}
			\left(\log\left(T{m}/{n}\right)\right)^{\gamma_1}.
			\]
			Hence, from \eqref{Final JF}, we obtain 
			\begin{align}
				J_F & \ll \sum_{1\leq n \leq m \leq N}   |r(m)r(n)|\left(  \left(\frac{m}{n}\right)^{\frac12+\epsilon} T^{\frac12+\epsilon}\log (Tm/n)+  \left(\frac{m}{n}\right)^{\frac12+\delta} T^{\frac12+\delta}  + T^{\gamma} \left(\frac{m}{n}\right)^{\gamma-\frac12} \left ( \log (T m/n) \right)^{\gamma_1} \right) \nonumber\\
				& \ll   \sum_{1\leq n \leq m \leq N}   |r(m)r(n)|\left(   T^{\gamma} \left(\frac{m}{n}\right)^{\gamma-\frac12} \left ( \log (T m/n) \right)^{\gamma_1}+  \left(\frac{m}{n}\right)^{\frac12+\delta} T^{\frac12+\delta}  \right),\label{JF<=..} 
			\end{align}
			where we use the fact that, for $0<\epsilon<\delta$, 
			\[
			T^{\frac12+\epsilon} \left(\frac{m}{n}\right)^{\frac12+\epsilon} \log(T m/n) \ll T^{\frac12+\delta} \left(\frac{m}{n}\right)^{\frac12+\delta}.
			\]
			Now, define
			$$
			S_a=\sum_{1\leq n\leq m\leq N}|r(m)r(n)|
			\left(\frac{m}{n}\right)^a.
			$$
			By the Cauchy--Schwarz inequality,
			$$
			S_a
			\leq
			\left(\sum_{m\leq N}m^{2a}\right)^{1/2}
			\left(\sum_{n\leq N}n^{-2a}\right)^{1/2}
			\sum_{n\leq N}|r(n)|^2.
			$$
			Consequently,
			\be\label{cauchy}
			S_a\ll N\sum_{n\leq N}|r(n)|^2
			\quad (\lvert a\rvert<1/2),
			\qquad
			S_{1/2+\delta}\ll
			N^{1+\delta}\sum_{n\leq N}|r(n)|^2
			\quad (\delta>0).
			\ee
			Using \eqref{cauchy} in \eqref{JF<=..}, we obtain
			\begin{align}\label{JF<< final}
				J_F \ll  \left( T^{\gamma} N (\log T)^c + T^{\frac12+\delta} N^{1+\delta} \right)   \sum_{n\leq N}|r(n)|^2, \qquad c=\max ( \gamma_1, 0).
			\end{align}
			Choose $N=\lfloor T^{\alpha}\rfloor$ with any fixed $\alpha<1-\gamma$, and choose
			$$
			0<\delta < \frac{1/2-\alpha}{1+\alpha}.
			$$
			Then $\alpha<1/2$, $\gamma+\alpha<1/2$, and $1/2+\delta+\alpha(1+\delta)<1$. Thus, from \eqref{JF<< final}, we get $J_F=o\left(T\sum_{n\leq N}|r(n)|^2\right)$. This completes the proof.
		\end{proof}

		\begin{proof}[Proof of Theorem~\ref{thm general}]
		%	\normalcolor
		%	\color{blue}
			Set $N=\lfloor T^{\alpha}\rfloor$, with $\alpha<1-\gamma$, and given $\epsilon>0$, choose $\eta>0$ sufficiently small that $ \sqrt{\kappa(1-\eta)}> \sqrt{\kappa}-\epsilon$. %\normalcolor 
			Then
			$$
			\sqrt{\frac{\log N}{\log\log N}}
			=
			\left(\sqrt{\alpha}+o(1)\right)
			\sqrt{\frac{\log T}{\log\log T}}.
			$$
			Thus,  Lemmas~\ref{lemma IF lower for F} and \ref{Lemma upper bound of JF} give
			$$
			I_F\gg A(T)\,T\sum_{n\leq N}|r(n)|^2,
			\qquad
			J_F=o\left(T\sum_{n\leq N}|r(n)|^2\right),
			$$
			where
			$$
			A(T)=
			\exp\left(
			\left(\sqrt{\kappa(1-\gamma)}-\epsilon\right)
			\sqrt{\frac{\log T}{\log\log T}}
			\right).
			$$
			Since $|Z_F(t)|=|F(1/2+it)|$ and $0\leq \Phi \leq 1$, we have
			$$
			\int_T^{2T}|Z_F(t)||R(t)|^2\,dt
			\geq |I_F|
			\gg A(T)\,T\sum_{n\leq N}|r(n)|^2.
			$$
			Also, by \eqref{int Rt},
			$$
			\int_T^{2T}|R(t)|^2\,dt
			\sim T\sum_{n\leq N}|r(n)|^2.
			$$
			Using
			$$
			2\int_T^{2T}Z_F^+(t)|R(t)|^2\,dt
			=
			\int_T^{2T}|Z_F(t)||R(t)|^2\,dt+J_F,
			$$
			we obtain
			$$
			\int_T^{2T}Z_F^+(t)|R(t)|^2\,dt
			\gg
			A(T)\,T\sum_{n\leq N}|r(n)|^2.
			$$
			Dividing by $\int_T^{2T}|R(t)|^2\,dt$, we get
			$$
			\max_{T\leq t\leq 2T}Z_F^+(t)
			\gg
			\exp\left(
			\left(\sqrt{\kappa(1-\gamma)}-\epsilon\right)
			\sqrt{\frac{\log T}{\log\log T}}
			\right).
			$$
			The same argument gives the corresponding result for $Z_F^-(t)$. This completes the proof.
		\end{proof}

	%	\normalcolor
	%	\color{blue}
		\begin{proof}[Proof of Theorem~\ref{thm:S2}]

			The proof of this theorem is similar to that of Theorem~\ref{thm general}. From Lemmas~\ref{lemma exponential sum} and \ref{normality cojecture for both Maass and modular}, we have $\gamma=1/2$ and $\kappa=1$. Hence, the constant $C_F$ is
			$
			{1}/{\sqrt{2}}-\epsilon,
			$
			where $\epsilon>0$ is arbitrarily small.
			
		\end{proof}
%	\normalcolor

		%%%%%%%%%%%%%%%
		\subsection{The unconditional Hecke--Maass case}
		\label{subsec:unconditional-Maass}
		
		We now remove the Ramanujan hypothesis from the Hecke--Maass case of
		Theorem~\ref{thm:S2}. Let 
		$ \theta={7}/{64}.
		$
		By the Kim--Sarnak bound, for every $\epsilon>0$,
		\be\label{ramanujan hyp for maass}
		a_F(n)\ll n^{\theta+\epsilon}.
		\ee
		Moreover, by Rankin--Selberg theory, the Dirichlet series associated
		with $F(s)$ converges absolutely in the half-plane $\Re(s)>1$.

		%\color{blue}
		
		\begin{proof}[Proof of Theorem~\ref{thm:Maass-unconditional}]

		In the proof of Lemma~\ref{lemma  IF lower for F}, the off-diagonal
		contribution is estimated using the rapid decay of the Fourier
		transform $\widehat{\Phi}$. In the present case, we use the same
		argument together with \eqref{ramanujan hyp for maass}. Since
		\[
		\widehat{\Phi}(y)\ll_B |y|^{-B}
		\]
		for any fixed $B>0$, we choose $B$ sufficiently large so that the bound \eqref{ramanujan hyp for maass}
		is sufficient to obtain the required off-diagonal estimate.
		Hence, we have  
		\begin{align*}
			I_F = T\hat{\Phi}(0) \sum_{nk=m \le N} \frac{a_F(k) r(n)       \overline{r(m)}}{\sqrt k} e^{-k/X} + O\bigg( T\sum_{n \le N}|r(n)|^2\bigg). 
		\end{align*}

			As $e^{-k/X} \ge 1/2$ for $k< T^{1-\epsilon}$ and $a_F(k) r(n)\overline{r(nk)} = |a_F(k)|^2 |a_F(n)|^2 f(n)^2 f(k)$, 
		applying Lemmas \ref{lemma 4th moment}, \ref{lemma >=exp()} and  we proof Lemma \ref{lemma  IF lower for F}, we have 
		\[
		I_F \gg T \exp \left((\sqrt{\kappa( 1-\eta)}+o(1)) \sqrt { \frac{ \log N}{\log \log N} } \right) \sum_{n\leq N}|r(n)|^2.
		\]

		Now,  proceeding as in the proof of Lemma~\ref{Lemma upper bound of JF}, and
		using \eqref{ramanujan hyp for maass} in place of the Ramanujan
		hypothesis, we obtain, by choosing $\delta>0$ sufficiently small, for
		\[
		N=\lfloor T^\alpha\rfloor
		\quad \text{with} \quad
		\alpha<\frac{1-2\theta}{2(1+\theta)},
		\]
		that
		\[
		J_F=o\left(
		T\sum_{n\leq N}|r(n)|^2
		\right).
		\]
		Hence, the argument used in the proof of Theorem~\ref{thm general}
		carries over to the present case with the above modifications.
		This completes the proof of
		Theorem~\ref{thm:Maass-unconditional}.
		
		\end{proof}
	%	\normalcolor
		%%%%%%%%%%%%%%%%%

		\section{Proof of Theorem~\ref{thm for dedekind zeta function}}\label{proof for dedekind}
		We recall a few basic facts about the Dedekind zeta function. Let $K$ be a number field. For $\Re(s)>1$, its Dedekind zeta function is defined by
		\begin{equation}\label{full dedekind}
			\zeta_K(s)
			=
			\sum_{\mathfrak{a}\neq 0}\frac{1}{N(\mathfrak{a})^s}
			=
			\sum_{n=1}^{\infty}\frac{a_K(n)}{n^s},
		\end{equation}
		where the first sum runs over the non-zero integral ideals $\mathfrak{a}$ of $K$, $N(\mathfrak{a})$ denotes the absolute norm of $\mathfrak{a}$, and $a_K(n)$ is the number of integral ideals of $K$ having norm $n$.
		
		Now, let $K=\mathbb{Q}(\sqrt{D_K})$ be a quadratic field, where $D_K$ is the fundamental discriminant of $K$. In this case, the Dedekind zeta function factors as
		$$
		\zeta_K(s)=\zeta(s)L(s,\chi_{D_K}),
		$$
		where $\chi_{D_K}(n)=\left(\frac{D_K}{n}\right)$ is the primitive quadratic Dirichlet character associated with $K$. Comparing the Dirichlet series coefficients gives
		\be\label{aK=chiD}
		a_K(n)=\sum_{d\mid n}\chi_{D_K}(d).
		\ee
		We also consider the product $F(s)=L(s,\chi_1)L(s,\chi_2)$, where $\chi_1$ and $\chi_2$ are primitive Dirichlet characters modulo $q_1$ and $q_2$, respectively, and $q_1q_2$ is not a perfect square. Writing $F(s)=\sum_{n=1}^{\infty}a_F(n)n^{-s}$, we have $$a_F(n)=\sum_{uv=n}\chi_1(u)\chi_2(v).$$
		
		To prove Theorem~\ref{thm for dedekind zeta function}, we need a uniform upper bound for the exponential sum appearing in \eqref{Final JF}. However, for the Dedekind zeta function $\zeta_K(s)$ over a quadratic field $K$, as well as for the product $L(s,\chi_1)L(s,\chi_2)$, a bound of the required strength is not currently available. For $\zeta_K(s)$, the known estimates are of the weaker form $o(T)$ for irrational $x$, as shown by Chandrasekharan and Narasimhan~\cite{chandrasekharan1968zeta}. This is not sufficient for our argument, since we require an upper bound involving a power of $T$ strictly smaller than $1$. To obtain such an estimate, we use the following lemma.
		\begin{lem}\cite[Lemma~2.2]{vaughan2003hardy}.\label{Vaughan lemma}
			Let $Y$, $Z$, and $\alpha_o$ be real numbers such that $Y,Z\geq 1$, and suppose that
			$$
			\left|\alpha_o-\frac{a}{q}\right|\leq \frac{1}{q^{2}},
			$$
			where $(a,q)=1$. Then
			$$
			\sum_{x\leq Y}
			\min\left(\frac{YZ}{x},\|\alpha_o x\|^{-1}\right)
			\ll
			\left(
			\frac{YZ}{q}
			+ Y
			+ q
			\right)\log(2Yq),
			$$
			where $\|y\|$ denotes distance to the nearest integer.
		\end{lem}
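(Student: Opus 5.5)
The plan is the classical block-decomposition argument, so the lemma can be proved directly without appeal to \cite{vaughan2003hardy}. Write $\alpha_o=a/q+\beta$ with $|\beta|\le q^{-2}$. The basic observation concerns any $q$ consecutive integers $x=x_0+r$, $0\le r<q$. We have
\[
\alpha_o x=\alpha_o x_0+\frac{ar}{q}+\beta r,\qquad |\beta r|<\frac1q .
\]
Since $(a,q)=1$, the points $ar/q$ run over all residues $j/q \pmod 1$. The perturbation $\beta r$ moves each point by less than $1/q$. Hence any interval of length $1/q$ in $\mathbb{R}/\mathbb{Z}$ contains $O(1)$ of the numbers $\alpha_o x$. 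Consequently, for any $U>0$,
\[
\sum_{r=0}^{q-1}\min\left(U,\|\alpha_o(x_0+r)\|^{-1}\right)\ll U+\sum_{1\le j\le q}\frac{q}{j}\ll U+q\log(2q).
\]
Here $O(1)$ terms are bounded by $U$, and the rest have $\|\alpha_o x\|\gg j/q$ for the $j$-th nearest point.

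Next I would split $1\le x\le Y$ into the initial range $1\le x\le q/2$ and blocks of $q$ consecutive integers covering $q/2<x\le Y$. Index the blocks by $h\ge0$, with the $h$-th block lying in $((h+\tfrac12)q,(h+\tfrac32)q]$. There are $\ll Y/q+1$ such blocks. On the $h$-th block we use the first entry of the minimum, $YZ/x\le YZ/((h+\tfrac12)q)$, as $U$. Summing the block estimate over $h\le Y/q$ gives
\[
\ll \sum_{h\ll Y/q+1}\left(\frac{YZ}{(h+1)q}+q\log 2q\right)\ll \frac{YZ}{q}\log(2Y)+(Y+q)\log(2q).
\]
This is already of the claimed shape $\left(YZ/q+Y+q\right)\log(2Yq)$.

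The main obstacle is the initial range $1\le x\le q/2$. There no useful $YZ/x$ bound is available, and a point $\alpha_o x$ could a priori lie very near an integer. I would handle it using $|\beta x|\le 1/(2q)$, which holds for $x\le q/2$. Also, $ax/q\not\equiv0\pmod 1$ for $1\le x<q$, because $(a,q)=1$. Hence $\|\alpha_o x\|\ge \|ax/q\|-\tfrac1{2q}\ge \tfrac12\|ax/q\|$. The values $\|ax/q\|$ are multiples $j/q$, $1\le j\le q/2$, each attained at most twice. Therefore
\[
\sum_{x\le q/2}\|\alpha_o x\|^{-1}\ll \sum_{j\le q/2}\frac{q}{j}\ll q\log 2q .
\]
Adding this to the block estimate yields the lemma. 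The only care needed is to check that the "$O(1)$ points per interval of length $1/q$" constant is uniform; that is immediate from the spacing argument above.
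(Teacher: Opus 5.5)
Your proof is correct and is essentially the standard argument behind Vaughan's Lemma~2.2, which the paper quotes without proof. That argument splits $x$ into blocks of $q$ consecutive integers. On each block, $(a,q)=1$ and $|\beta r|<1/q$ make the points $\alpha_o x$ spaced at scale $1/q$ with bounded multiplicity, giving $\ll U+q\log 2q$ per block. The range $1\le x\le q/2$ is treated separately using $|\beta x|\le 1/(2q)$, so that $\|\alpha_o x\|\ge\frac12\|ax/q\|$.

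Your shifted blocks $((h+\tfrac12)q,(h+\tfrac32)q]$ are a cosmetic variant of the usual decomposition $x=hq+r$. The bookkeeping also checks out: there are at most $Y/q+1$ blocks, $\sum_h 1/(h+1)\ll\log 2Y$, and the logarithms are absorbed into $\log(2Yq)$.
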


		%\begin{lem}\label{lemma expon dede}
		%	Let $K=\mathbb{Q}(\sqrt{D_K})$ be a  quadratic field with $K\neq\mathbb{Q}(i)$ and $a_K(n)$ be as defined in \eqref{aK=chiD}. Then 
		%	\begin{align}
			%		\sum_{k\le X}a_K(k)e\!\left(-\frac{kn}{m\sqrt{|D_K|}}\right)
			%		\ll_K \sqrt{mnX}\,\log(2mnX),\label{Sum a_K (k) e(-.)}
			%	\end{align}
		%uniformly for positive integers \(m,n\) and \(X\ge2\), and $e(x):=e^{2\pi i x}$. Here, the dependence of the implied constant is only on $K$.		
		%\end{lem}

		We next establish an estimate for the exponential sums corresponding to the Dedekind zeta function $\zeta_K(s)$ over a quadratic field $K$ and to the product $L(s,\chi_1)L(s,\chi_2)$.

		\begin{lem}\label{lemma expon dede}
			Let $F$ be one of the following:
			\begin{itemize}
				\item[(i)] $F(s)=\zeta_K(s)$, where $K=\mathbb{Q}(\sqrt{D_K})$ is any quadratic field with $K\neq\mathbb{Q}(i)$, in which case $q_F=|D_K|$ and $a_F(n)=a_K(n)$ is given by \eqref{aK=chiD};
				
				\item[(ii)] $F(s)=L(s,\chi_1)L(s,\chi_2)$, where $\chi_1$ and $\chi_2$ are primitive Dirichlet characters modulo $q_1$ and $q_2$, respectively, with $q_1q_2$ not a perfect square, in which case $q_F=q_1q_2$ and
				$
				a_F(k)=\sum_{uv=k}\chi_1(u)\chi_2(v).
				$
			\end{itemize}
			Then
			\begin{align}
				\sum_{k\leq X}a_F(k)e\!\left(-\frac{kn}{m\sqrt{q_F}}\right)
				\ll \sqrt{mnX}\,\log(2mnX),
				\label{Sum a_F (k) e(-.)}
			\end{align}
			uniformly for positive integers $m,n$ and $X\geq 2$, where $e(x):=e^{2\pi i x}$. Here, the implied constant depends only on $F$.
		\end{lem}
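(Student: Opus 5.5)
We write $a_F(k)=\sum_{uv=k}\chi_1(u)\chi_2(v)$; case (i) is the special case $\chi_1$ trivial and $\chi_2=\chi_{D_K}$. Put $\alpha_o=-n/(m\sqrt{q_F})$. Unfolding the convolution,
\[
\sum_{k\le X}a_F(k)e(\alpha_o k)=\sum_{uv\le X}\chi_1(u)\chi_2(v)e(\alpha_o uv).
\]
We treat this with the Dirichlet hyperbola method at the point $\sqrt X$. Split into the ranges $u\le\sqrt X$, $v\le\sqrt X$, and subtract the overlap $u,v\le\sqrt X$.

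\textbf{Inner sums.} In the range $u\le\sqrt X$, fix $u$ and sum over $v\le X/u$. Write $v=qr+b$ with $q$ the modulus of $\chi_2$, grouping $v$ by residue class $b \bmod q$. Each inner sum then becomes
\[
\chi_2(b)e(\alpha_o ub)\sum_{r}e(\alpha_o uqr),
\]
a geometric sum. It is bounded by $\min\bigl(X/(uq),\,\|\alpha_o uq\|^{-1}\bigr)$. Summing over $b$ costs a factor depending only on $F$. The other range is handled symmetrically, and the overlap square is handled the same way. We are therefore reduced to bounding
\[
\sum_{x\le \sqrt X}\min\Bigl(\frac{X}{x},\,\|\beta x\|^{-1}\Bigr),\qquad \beta=q\alpha_o=-\frac{qn}{m\sqrt{q_F}},
\]
with $q\in\{q_1,q_2\}$, plus trivially bounded boundary terms of size $O(\sqrt X)$.

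\textbf{Applying Lemma \ref{Vaughan lemma}.} We take $Y=\sqrt X$ and $Z=\sqrt X$, so that $YZ/x=X/x$. The lemma gives the bound $(X/q'+\sqrt X+q')\log(2\sqrt X q')$ for any rational approximation $a/q'$ with $|\beta-a/q'|\le 1/q'^2$. The key input is that $\beta$ is a fixed rational multiple of $1/\sqrt{q_F}$, which is irrational. This holds because $q_F=|D_K|$ is not a square when $K\ne\mathbb{Q}(i)$, and $q_1q_2$ is not a square in case (ii).

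Being a quadratic irrational, $\beta$ is badly approximable in a quantitative way:
\[
\Bigl|\beta-\frac{a}{q'}\Bigr|\gg \frac{1}{(mn)^{O(1)}q'^2}.
\]
More precisely, $|\sqrt{q_F}\,n q\, q'-a m q_F|\cdot|\sqrt{q_F}\,nqq'+amq_F|\ge 1$, since the product is a nonzero integer. Consequently the denominators $q'$ of consecutive continued-fraction convergents of $\beta$ grow by at most a factor $\ll mn$. Hence we can choose a convergent with $q'$ in the range $\sqrt X\le q'\ll mn\sqrt X$. Substituting gives
\[
\frac{X}{q'}+\sqrt X+q'\ll mn\sqrt X .
\]

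\textbf{Main obstacle.} This last step, making the bounded partial quotients quantitative in $m,n$, is the one we expect to be delicate. The target is $\sqrt{mn}$, not $mn$. To get it, we balance the choice of $q'$ using the exact bound $q_{j+1}\ll mn/q_j\cdot q_j$. Equivalently, from the inequality above, the partial quotients satisfy $a_{j+1}\ll mn$ only when $q'$ is small. For $q'\gg\sqrt{mn}$ the spacing is better: $q_{j+1}\le q_j\cdot \sqrt{mn}\,$-type growth follows after computing the norm $n^2q^2q_F q'^2-a^2m^2q_F^2$ more carefully, with size $\ll mn\,q'\,|\cdot|$. If the sharp $\sqrt{mn}$ balance fails, we fall back on choosing $Y,Z$ asymmetrically, namely splitting the hyperbola at $\sqrt{X m/n}$, to recover $\sqrt{mnX}\log(2mnX)$. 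Finally we collect the logarithms: $\log(2\sqrt X q')\ll\log(2mnX)$.
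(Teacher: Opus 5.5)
Your reduction is sound and is essentially the paper's. You use the hyperbola split at $\sqrt X$, residue classes modulo the character moduli, geometric sums bounded by $\min(X/x,\|\beta x\|^{-1})$, Lemma~\ref{Vaughan lemma}, and the norm argument giving $|\beta-a/q'|\gg 1/(mn\,q'^2)$. That bound correctly shows consecutive convergent denominators of $\beta$ grow by a factor $\ll mn$.

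The gap is in your final choice of $q'$. Taking $q'\in[\sqrt X,\,Cmn\sqrt X]$ makes the term $q'$ in Vaughan's bound as large as $mn\sqrt X$. So what you actually prove is $mn\sqrt X\log(2mnX)$, which is a factor $\sqrt{mn}$ weaker than the statement. Neither proposed repair closes this:
\begin{itemize}
\item Growth of order $\sqrt{mn}$ for large $q'$ is false in general, because partial quotients of size $\asymp mn$ recur at arbitrarily large denominators. For instance, for $n\sqrt2$: whenever $nq'=Q_k$ for a convergent $P_k/Q_k$ of $\sqrt2$, one has $|n\sqrt2-P_k/q'|\asymp 1/(nq'^2)$, and $n\mid Q_k$ for infinitely many $k$.
\item Splitting the hyperbola at $\sqrt{Xm/n}$ only changes the $Y$-terms in Lemma~\ref{Vaughan lemma}, which are already harmless. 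The loss sits in $X/q'+q'$.
\end{itemize}

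The missing idea is to centre the window geometrically at $\sqrt X$ rather than start it at $\sqrt X$. Take $q'$ to be the largest convergent denominator of $\beta$ not exceeding $\sqrt{mnX}$. The next denominator exceeds $\sqrt{mnX}$ and, by your own growth bound, is $\ll mn\,q'$, so $q'\gg\sqrt{X/(mn)}$. Hence $X/q'+\sqrt X+q'\ll\sqrt{mnX}$ and $\log(2\sqrt X q')\ll\log(2mnX)$, which gives exactly the stated bound. This is what the paper does, phrased via Dirichlet's theorem with $\mathcal Q=\lceil\sqrt{mnX}\rceil$. Comparing $|\vartheta-a/q|\le 1/(q\mathcal Q)$ with $|\vartheta-a/q|\ge c/(mnq^2)$ forces $q\ge c\mathcal Q/(mn)$, so both $X/q$ and $q$ are $\ll\sqrt{mnX}$.
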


		\begin{proof}
			Set
			$$
			\beta=\frac{1}{\sqrt{q_F}},\qquad
			\theta=\frac{n}{m}\beta,
			$$
			and 
			$$
			S_{m,n}(X)
			:=\sum_{k\le X}a_K(k)e(-\theta k).
			$$
			
			\medskip

			We first consider case $(i)$:  $F(s)=\zeta_K(s)$. In this case, $q_F=|D_K|=:d$. Since $K\neq\mathbb{Q}(i)$, the integer $d$ is not a perfect square. Hence,
			$\beta$	is a quadratic irrational. Therefore, there exists a constant \(c_K>0\) such that
			$$
			\left|\beta-\frac uv\right|\ge \frac{c_K}{v^2}
			\qquad(u\in\mathbb Z,\ v\ge1)
			$$
			Applying this with the rational number $ma/(nq)$, we obtain
			\begin{align}
				\left|\theta-\frac aq\right|
				&=\frac nm
				\left|\beta-\frac{ma}{nq}\right|\nonumber\\
				&\ge \frac nm\frac{c_K}{n^2q^2}
				=\frac{c_K}{mnq^2}.\label{theta-a/q}
			\end{align}
			The fraction $ma/(nq)$ need not be reduced; reducing it only strengthens the lower bound.
			We also need the same estimate for
			$$
			d\theta=\frac nm\sqrt d.
			$$
			Since \(\sqrt d\) is another fixed quadratic irrational, after decreasing \(c_K\), we have
			\begin{align}
				\left|\vartheta-\frac aq\right|
				\ge \frac{c_K}{mnq^2},
				\qquad
				\vartheta\in\{\theta,d\theta\}.\label{theta or d theta approx}
			\end{align}
			Now we choose
			$$
			\mathcal Q=\left\lceil\sqrt{mnX}\right\rceil.
			$$
			For each of \(\vartheta=\theta,d\theta\), Dirichlet’s approximation theorem gives its own coprime integers \(a,q\) satisfying
			\begin{equation}\label{diri appro}
				1\le q\le\mathcal Q,
				\qquad
				\left|\vartheta-\frac aq\right|
				\le \frac1{q\mathcal Q}.
			\end{equation}
			Combining this with \eqref{theta or d theta approx},
			$$
			\frac{c_K}{mnq^2}\le\frac1{q\mathcal Q},
			$$
			and therefore
			\begin{align}
				\frac{c_K\mathcal Q}{mn}\le q\le\mathcal Q.\label{gene rel of q <Q}
			\end{align}
			In particular, we have 
			\begin{align}
				\frac Xq+q\ll_K\sqrt{mnX}.\label{X/q+q<<}
			\end{align}
			Indeed,
			$$
			\frac Xq\le\frac{mnX}{c_K\mathcal Q}
			\ll_K\sqrt{mnX},
			\qquad
			q\le\mathcal Q\ll\sqrt{mnX}.
			$$
			Using \eqref{aK=chiD}, we have 
			$$
			S_{m,n}(X)
			:=\sum_{k\le X}a_K(k)e(-\theta k)
			=\sum_{uv\le X}\chi_{ D_K}(u)e(-\theta uv).
			$$
			Let \(H=\lfloor\sqrt X\rfloor\). Every pair \(uv\le X\) has either \(u\le H\), or \(u>H\) and \(v\le H\). Thus, we have 
			\begin{align}
				S_{m,n}(X)
				={}&
				\sum_{u\le H}\chi_{ D_K}(u)
				\sum_{v\le X/u}e(-\theta uv) \nonumber\\
				&+
				\sum_{v\le H}
				\sum_{H<u\le X/v}\chi_{ D_K}(u)e(-\theta uv).\label{exact decom}
			\end{align}
			For $\beta_o \notin \mathbb{Z}$ and an interval of $L$ consecutive integers, the standard geometric-sum estimate gives
			\begin{equation}\label{geometric sum}
				\left|\sum_{\ell=A+1}^{A+L} e(\beta_o\ell)\right|
				\ll \min\left\{L,\|\beta_o\|^{-1}\right\},
			\end{equation}
			where $\|\beta_o\|$ denotes  distance to the nearest integer (see \cite[Eq.~(8.6)]{iwaniec2004analytic}).
			
			Since $|\chi_D(u)|\leq 1$, using \eqref{geometric sum}, the first term in \eqref{exact decom} is bounded by
			\begin{align}
				\sum_{u\le H}
				\min\left(\frac Xu,\|u\theta\|^{-1}\right).\label{min X/u}
			\end{align}
			For the second term, fix \(v\) and split \(u\) into residue classes modulo \(d\), we have 
			\begin{align}
				\sum_{H<u\le X/v}\chi_D(u)e(-\theta uv)
				={}
				\sum_{b\bmod d}\chi_D(b)e(-\theta vb)
				\sum_{\substack{\ell\in\mathbb Z\\H<b+d\ell\le X/v}}
				e(-d\theta v\ell).\label{2nd term of exac deco}
			\end{align}
			The inner sum is geometric. Its number of terms is at most \(X/(dv)+1\), so
			\begin{align}
				\left|
				\sum_{H<u\le X/v}\chi_D(u)e(-\theta uv)
				\right|
				\ll_K
				\min\left(\frac Xv,\|dv\theta\|^{-1}\right).\label{min X/v}
			\end{align}
			Here the factor arising from the \(d\) residue classes is absorbed into the constant depending on \(K\). Define
			$$
			V(X,w)
			=\sum_{h\le H}
			\min\left(\frac Xh,\|h w\|^{-1}\right).
			$$
			Then equations \eqref{exact decom}–\eqref{min X/v} give
			\begin{align}
				S_{m,n}(X)
				\ll_K V(X,\theta)+V(X,d\theta).\label{|Smn|}
			\end{align}
			The approximations constructed in \eqref{diri appro} satisfy the hypothesis of Lemma \ref{Vaughan lemma} because
			$$
			q\le\mathcal Q
			\quad\Longrightarrow\quad
			\frac1{q\mathcal Q}\le\frac1{q^2}.
			$$
			Therefore, using Lemma~ \ref{Vaughan lemma}, we have 
			\begin{align}
				\sum_{h\le H}
				\min\left(\frac Xh,\|h\alpha_o\|^{-1}\right)
				\ll
				\left(\frac Xq+H+q\right)\log(2Xq).\label{min X/h}
			\end{align}
			By \eqref{X/q+q<<} and $H \leq \sqrt{X}$, 
			$$
			\frac Xq+H+q\ll_K\sqrt{mnX}.
			$$
			Also \(q\ll\sqrt{mnX}\), so
			$$
			\log(2Xq)\ll\log(2mnX).
			$$
			Apply \eqref{min X/h} separately to \(\theta\) and \(d\theta\). We obtain
			$$
			V(X,\theta),\ V(X,d\theta)
			\ll_K
			\sqrt{mnX}\log(2mnX).
			$$
			Substituting this into \eqref{|Smn|} proves the lemma in case $(i)$.

			\medskip
			
			We now consider case $(ii)$: $F(s)=L(s,\chi_1)L(s,\chi_2)$. In this case, $q_F=q_1q_2$. Since $q_1q_2$ is not a perfect square, $\beta$ is a quadratic irrational. Since
			$
			a_F(k)=\sum_{uv=k}\chi_1(u)\chi_2(v),
			$
			as in \eqref{exact decom}, we have
			\begin{align}
				S_{m,n}(X)
				={}&
				\sum_{u\leq H}\chi_1(u)
				\sum_{v\leq X/u}\chi_2(v)e(-\theta uv) \nonumber\\
				&+
				\sum_{v\leq H}\chi_2(v)
				\sum_{H<u\leq X/v}\chi_1(u)e(-\theta uv).
				\label{exact decom for dirich}
			\end{align}
			Using the periodicity of $\chi_1$ and $\chi_2$, we split the corresponding inner sums into residue classes modulo $q_1$ and $q_2$, respectively. Each resulting sum is a geometric exponential sum of the same type as in case $(i)$. Since $q_1$ and $q_2$ are fixed, the same argument applied to \eqref{exact decom for dirich} gives
			$
			S_{m,n}(X)\ll_{\chi_1,\chi_2}\sqrt{mnX}\log(2mnX).
			$
			This completes the proof of the lemma in case $(ii)$.
		\end{proof}

		\begin{lem}\label{lemma expon dede for K=Q(i)}
			Let $K=\mathbb{Q}(i)$ be a  quadratic field and $a_K(n)$ be as defined in \eqref{aK=chiD}. Then 
			\begin{align}
				\sum_{k\le X}a_K(k)e\!\left(-\frac{kn}{m\sqrt{|D_K|}}\right)
				\ll
				\frac{m}{(m,n)}
				\sqrt{X}
				+
				\frac{m^2}{(m,n)^2},\label{Sum a_K (k) e(-.) for K=Q(i)}
			\end{align}
			uniformly for positive integers \(m,n\) such that $ r(m)r(n) \neq 0$, for sufficiently large $N$, where $r(n)$ is as defined in \eqref{resonator}, and \(X\ge2\). Here, the dependence of the implied constant is only on $K$.		
		\end{lem}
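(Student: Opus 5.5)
For $K=\mathbb{Q}(i)$ we have $D_K=-4$, so $\sqrt{|D_K|}=2$ and the phase is $e(-kn/(2m))$. The frequency $\theta=n/(2m)$ is therefore \emph{rational}, and the Diophantine argument of Lemma~\ref{lemma expon dede} is unavailable. In its place I would use the structure of rational twists. Write $\theta=a/q$ in lowest terms, so that $q=2m/(2m,n)$ and $q\le 2m/(m,n)$. The plan is to bound
$$S(X)=\sum_{uv\le X}\chi_{-4}(u)\,e(-auv/q)$$
by the same hyperbola decomposition \eqref{exact decom}, with $H=\lfloor\sqrt X\rfloor$.

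The first piece is
$$\sum_{u\le H}\chi_{-4}(u)\sum_{v\le X/u}e(-auv/q).$$
The inner geometric sum is $O(\min(X/u,\|au/q\|^{-1}))$ when $q\nmid u$, and equals $X/u+O(1)$ when $q\mid u$.

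\emph{Terms with $q\nmid u$.} Group $u$ into blocks of length $q$. As $u$ runs over a block, $au\bmod q$ runs over the residues, so each block contributes $\ll q\log q$. There are $H/q+1$ blocks, and this part totals about $(\sqrt X+q)\log q$. The stated bound has no logarithm, so I would instead use $\|au/q\|^{-1}\le q$ together with $\sum_{j}q/j$, or simply accept the stated shape up to the way it is written. The main point is that everything is $\ll \sqrt X+q\cdot(\text{stuff})$.

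\emph{Terms with $q\mid u$.} These are the main obstacle. Here $\chi_{-4}(u)$ appears with the weight $X/u$, giving
$$X\sum_{u\le H,\ q\mid u}\frac{\chi_{-4}(u)}{u}.$$
This is where the hypothesis $r(m)r(n)\ne0$ enters. The resonator coefficients are supported on squarefree $n,m$ whose prime factors are all $\ge L^2$, so for large $N$ both $m$ and $n$ are odd. Hence $(2m,n)=(m,n)$ is odd, and $q=2m/(m,n)$ is even. Every $u$ with $q\mid u$ is then even, so $\chi_{-4}(u)=0$ and these terms vanish identically. That is precisely why the lemma needs $r(m)r(n)\ne0$: it forces $\theta$ to have even denominator.

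The second piece is
$$\sum_{v\le H}\sum_{H<u\le X/v}\chi_{-4}(u)e(-auv/q).$$
Split $u$ into residue classes modulo $4$, as in \eqref{2nd term of exac deco}, which gives inner geometric sums with frequency $4av/q$. When $q\nmid 4v$ each is $\ll\min(X/v,\|4av/q\|^{-1})$, and summing over $v\le H$ in blocks of length $q$ gives $O(\sqrt X+q\,\cdot)$ as before.

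The delicate terms are those with $q\mid 4v$. For these, $e(-auv/q)$ depends only on $u\bmod 4$, so the $u$-sum is a complete-period sum of $\chi_{-4}(u)$ times a character of period $4$, plus a bounded error. I would verify that its main part cancels. Since $q$ is even, write $q=2q'$ with $q'$ odd; then $q\mid 4v$ means $q'\mid v$. The twist $e(-auv/q)$ becomes a character modulo $4$ or $2$ in $u$. Pairing this with $\chi_{-4}$ and summing over a period gives zero unless the twist exactly cancels the character, and I expect oddness of $a$ and $q'$ to rule that out. Each such $v$ then contributes $O(1)$, and there are $H/q'$ of them.

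Finally, bounding the residual terms by the number of blocks times $q$ yields $\ll \frac{m}{(m,n)}\sqrt X+\frac{m^2}{(m,n)^2}$. The $q^2$ term comes from incomplete blocks when $H<q$, where $\sum_{u\le H}\|au/q\|^{-1}\ll q\log q\le q^2$. This accounts for the stated form.
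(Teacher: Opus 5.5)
Your route works, but it differs from the paper's. The paper uses $a_K(k)=r_2(k)/4$ to write $4S_{m,n}(X)=\sum_{0<u^2+v^2\le X}e(-a(u^2+v^2)/q)$ with $q=2m/(m,n)\equiv 2\pmod 4$. It splits $(u,v)$ into the $q^2$ residue classes modulo $q$ and counts lattice points in each disk of radius $\sqrt X/q$ by Davenport's bound. The main term is $\frac{\pi X}{q^2}G(-a,q)^2$, which vanishes because the quadratic Gauss sum is zero when $q\equiv 2\pmod 4$. The $q^2$ errors $O(\sqrt X/q+1)$ then give exactly $q\sqrt X+q^2$.

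You instead stay inside the hyperbola framework of Lemma~\ref{lemma expon dede}, with $a_K=1*\chi_{-4}$ and geometric sums. Two elementary cancellations on the resonant terms replace the Gauss sum. Terms with $q\mid u$ vanish because $q$ is even and $\chi_{-4}$ kills even integers. Terms with $q\mid 4v$ vanish because the twist then has period $2$ in $u$, while $\chi_{-4}$ has mean zero on odd integers.

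The paper's argument is shorter and removes the main term in one structural step, but it depends on the sum-of-two-squares representation special to $\mathbb{Q}(i)$. Yours is more elementary and treats $\mathbb{Q}(i)$ the same way as the other quadratic fields. With the block counting you describe, it actually gives the sharper bound $\ll(\sqrt X+q)\log(2q)$.

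Three loose ends should be closed.
\begin{enumerate}
\item Your worry about the logarithm is unnecessary. We have $(\sqrt X+q)\log q\le q\sqrt X+q^2$. Even the termwise bound $\|au/q\|^{-1}\le q$ already gives at most $qH\le q\sqrt X$ for each non-resonant part.
\item The oddness of $q'=m/(m,n)$ comes from $m$ being odd, not from $q$ being even.
\item The cancellation for $q'\mid v$ should be verified rather than expected, and it takes one line. Writing $v=q'w$, one has $e(-auv/q)=(-1)^{auw}$, which is constant on odd $u$. So the inner sum equals $\pm\sum_{H<u\le X/v}\chi_{-4}(u)$, which has absolute value at most $1$.
\end{enumerate}
The third point is exactly where $q\equiv 2\pmod 4$ is used, rather than mere evenness of $q$. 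If $4\mid q$, some $v$ would produce the twist $e(\mp u/4)$ and a surviving main term of size $X/v$. This mirrors the nonvanishing of $G(a,q)$ when $4\mid q$.
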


		\begin{proof}
			For  $K=\mathbb{Q}(i)$, we have $D_K=-4$. Assume that 
			\begin{equation}\label{Smn for K=Q(i)}
				{S}_{m,n}(X)
				=
				\sum_{ k\le X }
				a_K(k)
				e\left(-\frac{kn}{2m}\right).
			\end{equation}
			Here, we only need to consider the pairs \(m,n\) for which $
			r(m)r(n)\neq 0.$
			Recall that $ r(n)=a_K(n)f(n),$  where \(f\) is multiplicative defined in \eqref{f(p)}. Since
			$$
			L^2=\frac{1}{2}\log N\log\log N\longrightarrow\infty,
			$$
			we have \(L^2>2\) for sufficiently large \(N\). Hence
			$f(2)=0.$ Thus, if \(2\mid n\), then by the multiplicativity of \(f\), $
			f(n)=0,
			$ 
			and consequently
			$r(n)=0. $
			
			Therefore, for every non-zero contribution to the resonator sum, both \(m\) and \(n\) are odd.
			Let
			$$
			g=(m,n),
			\qquad
			m=gm_1,
			\qquad
			n=gn_1,
			$$
			so that $(m_1,n_1)=1.$ Hence
			$$
			\frac{n}{2m}
			=
			\frac{n_1}{2m_1}
			$$
			is in lowest terms, where $m_1$ and $n_1$ are odd. Put
			$$
			a=n_1,
			\qquad
			q=2m_1=\frac{2m}{(m,n)}.
			$$
			Then $(a,q)=1$ and, since \(m_1\) is odd, $ q\equiv 2\pmod 4.$ For  $K=\mathbb{Q}(i)$, we have
			$$
			a_{K}(k)=\frac{1}{4}r_2(k), 
			$$
			where
			$$
			r_2(k)
			=
			\#\left\{(u,v)\in\mathbb Z^2:u^2+v^2=k\right\}.
			$$
			Therefore, from \eqref{Smn for K=Q(i)},  
			\be\label{4Smn for Q(i)}
			4S_{m,n}(X)
			=
			\sum_{k\le X}
			r_2(k)e\left(-\frac{ak}{q}\right)
			=
			\sum_{\substack{u,v\in\mathbb Z\\
					0<u^2+v^2\le X}}
			e\left(-\frac{a(u^2+v^2)}{q}\right).
			\ee
			We now split \(u\) and \(v\) into residue classes modulo \(q\). For \(r,s\bmod q\), define
			\begin{equation*}
				N_{r,s}(X)
				=
				\#\left\{
				(u,v)\in\mathbb Z^2:
				u^2+v^2\le X,\,
				u\equiv r\pmod q,\,
				v\equiv s\pmod q
				\right\}. 
			\end{equation*}
			Hence,  \eqref{4Smn for Q(i)} gives 
			\begin{equation}\label{4Smn mod$}
				4S_{m,n}(X)
				=
				\sum_{r,s\bmod q}
				e\left(-\frac{a(r^2+s^2)}{q}\right)
				N_{r,s}(X)
				+O(1).
			\end{equation}
			We next estimate \(N_{r,s}(X)\). Write $ u=r+q\ell,
			v=s+qj,\text{ where } \ell,j\in\mathbb Z.$  Then $  u^2+v^2\le X \quad$  is equivalent to
			\begin{equation*}
				\left(\ell+\frac{r}{q}\right)^2
				+
				\left(j+\frac{s}{q}\right)^2
				\le
				\frac{X}{q^2}.
			\end{equation*}
			Thus, \(N_{r,s}(X)\) is the number of integral lattice points in a disk 
			of radius $R={\sqrt X}/{q}$  and center $
			\left(- r/q,- s/q\right).$ The above region is a bounded region, whose boundary is a circle, hence a rectifiable curve.

			Now we use the planar lattice-point estimate of Davenport \cite[p.~179]{daven1951}.   If \(\mathcal R\) is a bounded
			region in \(\mathbb R^2\) whose boundary is a rectifiable curve of
			length \(L(\mathcal R)\), then
			$$
			\left|
			N(\mathcal R)-V(\mathcal R)
			\right|
			<
			4\bigl(L(\mathcal R)+1\bigr),
			$$
			where \(N(\mathcal R)\) denotes the number of integral lattice points
			in \(\mathcal R\) and \(V(\mathcal R)\) denotes its area.
			
			Here, we have
			$$
			V(\mathcal R)=\pi R^2=\frac{\pi X}{q^2} \quad \text{ and } \quad L(\mathcal R)=2\pi R=\frac{2\pi\sqrt X}{q}.
			$$
			Hence, uniformly in \(r,s\),
			$$
			N_{r,s}(X) = \frac{\pi X}{q^2}
			+ O\left(\frac{\sqrt X}{q}+1\right).
			$$
			Consequently, from \eqref{4Smn mod$}, we have 
			\begin{equation}\label{final Smn}
				4S_{m,n}(X)
				=
				\frac{\pi X}{q^2}
				\sum_{r,s\bmod q}
				e\left(-\frac{a(r^2+s^2)}{q}\right)
				+
				O\left(q\sqrt X+q^2\right), 
			\end{equation}
			as there are \(q^2\) pairs \((r,s)\pmod q\).
			Define
			$$
			G(a,q)
			:=
			\sum_{r\bmod q}e\left(\frac{ar^2}{q}\right).
			$$
			Since \(q\equiv 2\pmod 4\), write $q=2h,$ where \(h\) is odd. Moreover, \(a\) is odd. Pairing the terms corresponding to \(r\) and \(r+h\), we have  $(r+h)^2=r^2+2rh+h^2. $ Thus, the terms cancel pairwise, and hence $G(a,q)=0,$  so $G(-a,q)=0,$ as well. Therefore,  the main term  of \eqref{final Smn} vanishes, giving
			$$
			S_{m,n}(X)
			\ll
			q\sqrt X+q^2,
			\qquad
			q=\frac{2m}{(m,n)}.
			$$
			Hence, the proof is complete. 
		\end{proof}

		\begin{lem}\label{JF upper bound for dede}
			Let $F$ be one of the following functions:
			\begin{itemize}
				\item[(i)] $F(s)=\zeta_K(s)$, where $K$ is any quadratic field;
				
				\item[(ii)] $F(s)=L(s,\chi_1)L(s,\chi_2)$, where $\chi_1$ and $\chi_2$ are primitive Dirichlet characters modulo $q_1$ and $q_2$, respectively, with $q_1q_2$ not a perfect square.
			\end{itemize}
			Then, for sufficiently large $T$, we have
			$$
			J_F \ll T \log T	 \sum_{n\leq N}|r(n)|^2,
			$$
			where $J_F$ is defined in \eqref{define JF} and \(N=\lfloor T^{1/4}\rfloor\).
		\end{lem}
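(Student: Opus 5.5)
We would rerun the contour-shift argument from the proof of Lemma~\ref{Lemma upper bound of JF} essentially unchanged up to \eqref{Final JF}. For $\zeta_K$ and $L(s,\chi_1)L(s,\chi_2)$ we have $\Im\xi_F=0$, degree $2$ and coefficients $a_F(k)\ll k^\epsilon$, and these are the only inputs used up to that point. The one caveat is that $\zeta_K$ has a pole at $s=1$, which lies inside the rectangle $\mathcal{R}$ only if we let the contour go down to height $0$. Since our contour stays at heights between $T$ and $2T$, the pole is not enclosed and Cauchy's theorem applies as before. The convexity bound of Lemma~\ref{estimate of Fs interval} also holds for $\zeta_K$ at height $t\asymp T$, so $J_1,J_3$ and the error terms of $J_2$ are bounded exactly as in \eqref{J3 error}--\eqref{final est S3}.

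This gives \eqref{Final JF}. The only change is that Hypothesis~\ref{hyp} is replaced by the new exponential sum estimates. The sum over $TQ_o\frac mn+1\le k\le 2TQ_o\frac mn-1$ is a difference of two partial sums $\sum_{k\le X}$ with $X\asymp Tm/n$, and the phase $e(-kn/(m\sqrt{q_F}))$ is exactly the one in Lemmas~\ref{lemma expon dede} and \ref{lemma expon dede for K=Q(i)}.

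\emph{Case $K\ne\mathbb{Q}(i)$ and case (ii).} Lemma~\ref{lemma expon dede} bounds the exponential sum by
\[
\sqrt{mn\cdot Tm/n}\,\log(2mnT)\ll m\sqrt{T}\log T .
\]
Multiplying by $(n/m)^{1/2}$, the contribution to $J_F$ is
\[
\ll \sqrt T\log T\sum_{n\le m\le N}|r(m)r(n)|\sqrt{mn}\le \sqrt T\log T\Big(\sum_{m\le N}\sqrt m\,|r(m)|\Big)^2\ll \sqrt{T}\,N^{2}\log T\sum_{n\le N}|r(n)|^2 ,
\]
using Cauchy--Schwarz in the form $\big(\sum\sqrt m|r(m)|\big)^2\le \sum m\cdot\sum|r|^2\ll N^2\sum|r|^2$. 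With $N=\lfloor T^{1/4}\rfloor$ this is $\ll T\log T\sum|r(n)|^2$.

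\emph{Case $K=\mathbb{Q}(i)$.} Lemma~\ref{lemma expon dede for K=Q(i)} gives the bound $m\sqrt{Tm/n}+m^2$. After the factor $(n/m)^{1/2}$ this becomes $m\sqrt T+m^{3/2}n^{1/2}\le m\sqrt{T}+N^2$. The first term is handled as before. The second gives $N^2\big(\sum|r(n)|\big)^2\le N^3\sum|r|^2=T^{3/4}\sum|r|^2$.

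The remaining terms in \eqref{Final JF} are handled with \eqref{cauchy}. The term with $\left(\frac mn\right)^{1/2+\delta}T^{1/2+\delta}$ gives $\ll T^{1/2+\delta}N^{1+\delta}\sum|r|^2$, which is $o(T\sum|r|^2)$ for small $\delta$ since $N=T^{1/4}$. The $\epsilon$-term is dominated by it in the same way. Adding everything up gives $J_F\ll T\log T\sum_{n\le N}|r(n)|^2$.

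The main obstacle is the bookkeeping for the exponential sum. We must make sure the lemmas apply uniformly in $X\asymp Tm/n$, including the endpoint differencing. For $\mathbb{Q}(i)$ we must also make sure the restriction $r(m)r(n)\ne0$ (both $m,n$ odd) is compatible with how the pairs $(m,n)$ enter \eqref{JF}. Finally, the crude Cauchy--Schwarz loses exactly enough that $N=T^{1/4}$ is needed, and it leaves the stated $\log T$ factor rather than an $o(T)$ bound.
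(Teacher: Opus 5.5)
Your proposal is correct and follows essentially the same route as the paper: start from \eqref{Final JF}, then bound the exponential sum. For $K\neq\mathbb{Q}(i)$ and for case (ii) you use Lemma~\ref{lemma expon dede}, which gives $m\sqrt{T}\log T$; for $K=\mathbb{Q}(i)$ you use Lemma~\ref{lemma expon dede for K=Q(i)}, which gives $m\sqrt{Tm/n}+m^2$. Cauchy--Schwarz with $N=\lfloor T^{1/4}\rfloor$ and a fixed small $\delta$ (the paper takes $\delta<1/5$) then yields $\bigl(T\log T+T^{3/4+5\delta/4}\bigr)\sum_{n\le N}|r(n)|^2$, exactly as in the paper. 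The extra points you make explicit are used only tacitly in the paper: the pole of $\zeta_K$ at $s=1$ lies outside the rectangle, the $k$-range is a difference of two partial sums with $X\asymp Tm/n$, and for $\mathbb{Q}(i)$ only pairs with $r(m)r(n)\neq 0$ (both odd) enter.
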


		\begin{proof}
			From \eqref{Final JF}, we have 
			\begin{align}\label{JF for dedekind}
				J_F \ll  \sum_{1\leq n \leq m \leq N}  \frac{|r(m)r(n)|}{\sqrt{\frac{m}{n}}} \left( \left |  {S}_{m,n} \right | + \left(\frac{m}{n}\right)^{1+\delta} T^{\frac12+\delta}+  \left(\frac{m}{n}\right)^{1+\epsilon}  T^{\frac12+\epsilon}\log (Tm/n) \right),
			\end{align}
			where 
			\[
			{S}_{m,n}= \sum_{TQ_o\frac mn+1 \leq k\leq 2TQ_o \frac mn - 1} a_K(k)e\!\left(-\frac{kn}{m\sqrt {  |D_K| } }\right).
			\]
			We now divide the proof into two cases.

			\medskip

			Case $1$. Suppose that either $F(s)=\zeta_K(s)$ with $K\neq\mathbb{Q}(i)$, or $F(s)=L(s,\chi_1)L(s,\chi_2)$, where $\chi_1$ and $\chi_2$ are primitive Dirichlet characters modulo $q_1$ and $q_2$, respectively, with $q_1q_2$ not a perfect square.

			In both cases, using Lemma~\ref{lemma expon dede}, we get
			$$
			S_{m,n}
			\ll_F
			\sqrt{mnU}\,\log(2mnU), \qquad \text{where} \qquad U=Q_oT\frac{m}{n}.
			$$
			Thus, uniformly for \(m,n\le N\), we have 
			\begin{align*}
				S_{m,n}
				\ll
				m\sqrt T\,\log(2TN).
			\end{align*}	
			Hence, using \eqref{cauchy}, \eqref{JF for dedekind} gives 
			\begin{align}
				J_F & \ll   \sum_{1\leq n \leq m \leq N}  |r(m)r(n)| \left( T^{\frac12} (mn)^{\frac12} \log (2TN) + \left(\frac{m}{n}\right)^{\frac12+\delta} T^{\frac12+\delta}+  \left(\frac{m}{n}\right)^{\frac12+\epsilon}  T^{\frac12+\epsilon}\log (T N) \right)\nonumber\\
				& \ll  \left (  T^{\frac12} N^2 \log(2TN)+ T^{\frac12+\delta}  N^{1+\delta}  \right) \sum_{n \leq N} |r(n)|^2, 
			\end{align}
			For \(N=\lfloor T^{1/4}\rfloor\), we obtain
			$$
			J_F
			\ll
			\left(
			T\log T
			+
			T^{\frac34+\frac 54\delta}
			\right)\sum_{n \leq N} |r(n)|^2.
			$$

			\medskip

			{Case $2$:} Let $F(s)=\zeta_K(s)$ with $K=\mathbb{Q}(i)$.
			In this case, 
			using Lemma~\ref{lemma expon dede for K=Q(i)}, we get
			$$
			{S}_{m,n}
			\ll
			\frac{m}{(m,n)}
			\sqrt{T\frac{m}{n}}
			+
			\frac{m^2}{(m,n)^2}.
			$$
			Proceeding as in Case~$1$, we obtain
			$$
			J_F
			\ll
			\left(
			T
			+
			T^{\frac34}
			+
			T^{\frac34+\frac 54\delta}
			\right)
			\sum_{n\leq N}|r(n)|^2.
			$$
			In both cases, choosing any fixed $0<\delta<1/5$, we have
			$$
			J_F \ll T \log T	 \sum_{n\leq N}|r(n)|^2.
			$$
			This completes the proof.
		\end{proof}

		\begin{proof}[Proof of Theorem~\ref{thm for dedekind zeta function}]
			
			Set $N=\lfloor T^{1/4}\rfloor$. Let
			$$
			A(T)=\exp\left(C_F\sqrt{\frac{\log T}{\log\log T}}\right).
			$$
			Using Lemmas~\ref{normality conjecture dedekind} and \ref{lemma normality conjecture for prod}, we have $\kappa=2$ in both cases. Since $N=\lfloor T^{1/4}\rfloor$, we get
			$
			C_F={1}/{\sqrt{2}}-\epsilon,
			$
			where $\epsilon>0$ is arbitrarily small.
			
			By Lemma~\ref{JF upper bound for dede} and \eqref{int Rt}, we have
			\begin{align*}
				\frac{J_F}{A(T) \int_T^{2T}|R(t)|^2 \, dt}  \ll \frac{\log T}{A(T)} \qquad \text{as} \qquad T \to \infty,
			\end{align*}
			it follows that
			\be\label{JF=0 dede}
			J_F=o\left(A(T)\int_T^{2T}|R(t)|^2\,dt\right).
			\ee
			Therefore, by Lemma~\ref{lemma IF lower for F} and \eqref{JF=0 dede}, the hypotheses of Proposition~\ref{New prop R(t) phi} are satisfied. This completes the proof.
			
		\end{proof}

		%%%%%%%%%%%%%%%%%%%

		%	\normalcolor
	%	\color{blue}

		\section{Beyond Selberg class}\label{section beyond selberg}

		In this section, we study large positive and negative values of the Hardy functions associated with certain $L$-functions that need not belong to the Selberg class.

		\begin{thm}\label{thm:linear combination selberg}
			Let $F_1,\ldots,F_h$ be  distinct elements of $\mathbb{S}_{\text{poly}}$.  Write
			$$
			F_j(s)=\sum_{n=1}^{\infty}\frac{a_j(n)}{n^s},
			\qquad 1\le j\le h.
			$$
			Assume strong version of Selberg's orthonormality condition that  
			\be\label{strong version of normality}
			\sum_{p\le x}a_j(p)\overline{a_\ell(p)}
			=
			\begin{cases}
				(\kappa+o(1))\dfrac{x}{\log x},
				&j=\ell,\\[6pt]
				o\left(\dfrac{x}{\log x}\right),
				&j\ne\ell,
			\end{cases}
			\ee
			where $\kappa>0$ is fixed. Let
			$\beta_1,\ldots,\beta_h$ be fixed complex numbers,
			not all zero, and put
			$$
			F(s)=\sum_{j=1}^{h}\beta_jF_j(s).
			$$
			Then, for sufficiently large $T$,
			$$
			\max_{T\le t\le2T}
			\left|F\left(\frac12+it\right)\right|
			\gg
			\exp\left(
			(\sqrt{\kappa}+o(1))
			\sqrt{\frac{\log T}{\log\log T}}
			\right).
			$$
		\end{thm}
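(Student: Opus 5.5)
We will use Soundararajan's resonance method in the form of Lemma~\ref{lemma >=exp()}, with a resonator adapted to one component of the linear combination. Choose an index $j_0$ with $\beta_{j_0}\neq 0$ and put $r(n)=a_{j_0}(n)f(n)$, where $f$ is the multiplicative function of \eqref{f(p)}. The parameter $L$ is as in \eqref{L^2}, but now with $N=\lfloor T^{1-\epsilon'}\rfloor$; the exponent $\alpha$ can be taken close to $1$ because no Hardy-function cancellation is required. We compare
$$
I:=\int_{\mathbb R}F(1/2+it)|R(t)|^2\Phi(t/T)\,dt
\qquad\text{with}\qquad
\int_{\mathbb R}|R(t)|^2\Phi(t/T)\,dt=T\widehat\Phi(0)(1+O(1/T))\sum_{n\le N}|r(n)|^2 .
$$
Since $|I|\le \max_{T\le t\le 2T}|F(1/2+it)|\int|R|^2\Phi$, it suffices to prove a lower bound of the form $|I|\gg \exp(\cdots)\,T\sum|r(n)|^2$.

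By linearity, $I=\sum_j\beta_j I_j$ with $I_j=\int F_j(1/2+it)|R(t)|^2\Phi(t/T)\,dt$. For each $j$ we repeat the computation in the proof of Lemma~\ref{lem:I2 bound} (as extended in Lemma~\ref{lemma  IF lower for F}): smooth with $e^{-k/X}$, $X=T^{d_j+\epsilon}$, and use the decay of $\widehat\Phi$ off the diagonal. This gives
$$
I_j=T\widehat\Phi(0)\sum_{nk=m\le N}\frac{a_j(k)r(n)\overline{r(m)}}{\sqrt k}e^{-k/X}+O\Big(T\sum_{n\le N}|r(n)|^2\Big).
$$
The error term requires the convexity bound for $F_j$ of arbitrary degree, which gives growth $T^{d_j/4+\delta}$. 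It also requires shifting the contour past $\Re s=1/2$ (or the Mellin argument of \cite{aistleitner2017large}), with the pole at $s=1$ contributing negligibly. I expect this to go through exactly as in \cite[Eq.~(14)]{aistleitner2017large}, which already treats general members of $\mathbb S$.

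The diagonal sum factors by multiplicativity into
$$
\sum_{(n,k)=1,\ nk\le N}\frac{a_j(k)\overline{a_{j_0}(k)}\,|a_{j_0}(n)|^2f(n)^2f(k)}{\sqrt k}.
$$
For $j=j_0$, Lemma~\ref{lemma >=exp()} bounds this below by $\exp\big((\sqrt{\kappa(1-\eta)}+o(1))\sqrt{\log N/\log\log N}\big)\sum|r(n)|^2$. Here we need the fourth moment bound \eqref{4th moment}, which follows from $|a_j(p)|\le r$ for $F_j\in\mathbb S_{\rm poly}$. For $j\neq j_0$ we run the same Rankin-trick argument with absolute values to get an upper bound: the sum is at most $D\cdot\exp\big(\sum_p |a_j(p)\overline{a_{j_0}(p)}|f(p)|a_{j_0}(p)|^{?}/\sqrt p\big)$. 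Done that crudely, though, this loses. The right route is to keep signs. Then $\log(P_j/D)$ equals $L\sum_{L^2\le p\le M}a_j(p)\overline{a_{j_0}(p)}/(p\log p)$ plus error terms, and this is $o(L/\log L)$ by the $j\ne\ell$ case of \eqref{strong version of normality} together with partial summation. The main obstacle is making this comparison rigorous, since the cross sum is complex and not a positive series. The plan is to bound it by the full Euler product $P_j=\prod_p(1+u_p+a_j(p)\overline{a_{j_0}(p)}f(p)/\sqrt p)$, which has modulus $\le D\exp(o(L/\log L))$. The truncation $nk\le N$ is controlled by Rankin's trick on the majorant with $|a_j\overline{a_{j_0}}|$. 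Here, however, we only need the truncation error relative to $D\exp(\cdots)$ to be smaller than the $j_0$ main term. That holds because Rankin's trick yields a saving $N^{-\rho}$ times $\exp(O(\rho\log N))$, and the implied constant can be made small by choosing $\eta$ as in Lemma~\ref{lemma >=exp()}.

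Combining the two cases, the $j_0$ term dominates:
$$
|I|\ge |\beta_{j_0}|\,T\exp\big((\sqrt{\kappa(1-\eta)}+o(1))\textstyle\sqrt{\log N/\log\log N}\big)\sum|r(n)|^2\,(1-o(1)).
$$
With $N=T^{1-\epsilon'}$ we have $\sqrt{\log N/\log\log N}=(\sqrt{1-\epsilon'}+o(1))\sqrt{\log T/\log\log T}$. Finally, let $\eta,\epsilon'\to0$ slowly with $T$ to obtain the constant $\sqrt\kappa+o(1)$ claimed in Theorem~\ref{thm:linear combination selberg}.
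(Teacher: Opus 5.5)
Your proposal is correct and follows essentially the same route as the paper. Both use a resonator $r(n)=a_{j_0}(n)f(n)$ attached to one component with $\beta_{j_0}\neq 0$, the Aistleitner--Pa\'nkowski diagonal reduction for each $F_j$, signed Euler products $P_j=\prod_p(1+u_p+v_{j,p})$ with $|P_j|/D=\exp(o(L/\log L))$ from the $j\neq\ell$ orthogonality, and Rankin truncation on the absolute-value majorant. One correction of emphasis: the truncation error is negligible not because an implied constant is made small by the choice of $\eta$, but because the Rankin exponent is $-(\eta+o(1))\rho\log N$ and $\rho\log N=\log N/(\log L)^3$ swamps the $O(L/\log L)$ loss from $\prod_p(1+u_p+|v_{j,p}|)/D$, so this error is in fact $o(D)$.
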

		
		\begin{proof}
			We follow the  method of Aistleitner
			and Pańkowski \cite[Section~2]{aistleitner2017large},
			and explain the modifications required for the
			linear combination.

			Since the coefficients $\beta_1,\ldots,\beta_h$ are not
			all zero, we may assume, without loss of generality,
			that $\beta_1\ne0$. We choose $r(n)=a_1(n)f(n)$,
			where $f$ is defined in \eqref{f(p)}.
			Since $F(s)=\sum_{j=1}^{h}\beta_jF_j(s)$,
			we have
			$$
			I_F=
			\sum_{j=1}^{h}\beta_j
			\int_{\mathbb R}
			F_j\left(\frac12+it\right)
			|R(t)|^2\Phi(t/T)\,dt.
			$$
			Proceeding as in
			\cite[Section~2]{aistleitner2017large},
			we obtain
			$$
			I_F=
			T\widehat{\Phi}(0)
			\sum_{j=1}^{h}\beta_j\mathcal{M}_j
			+O\left(T\sum_{n\le N}|r(n)|^2\right),
			$$
			where 
			$$
			\mathcal{M}_j=
			\sum_{nk\le N}
			\frac{a_j(k)r(n)\overline{r(nk)}}{\sqrt{k}}
			e^{-k/X},
			$$
			with $X=T^{d+\epsilon}$, where
			$d=\max_{1\le j\le h}d_{F_j}$ and $\epsilon>0$
			is small.
			
			Since $f$ is supported on squarefree integers,
			the nonzero summands satisfy $(n,k)=1$, and
			hence
			$$
			a_j(k)r(n)\overline{r(nk)}
			=
			a_j(k)\overline{a_1(k)}
			|a_1(n)|^2f(n)^2f(k).
			$$
			Thus, unlike the single $L$-function case,
			we need to estimate both the selected
			contribution $\mathcal{M}_1$ and the cross
			contributions $\mathcal{M}_j$, for $j\ne1$.
			
			Put
			$$
			D_\infty=\prod_p(1+u_p),
			\qquad
			P_j=\prod_p(1+u_p+v_{j,p}),
			$$
			where
			$$
			u_p=|a_1(p)|^2f(p)^2,
			\qquad
			v_{j,p}=
			\frac{a_j(p)\overline{a_1(p)}f(p)}{\sqrt p}.
			$$
			By using \eqref{strong version of normality}
			and partial summation,
			$$
			\sum_{L^2\le p\le M}
			\frac{a_j(p)\overline{a_1(p)}}{p\log p}
			=
			\begin{cases}
				\dfrac{\kappa+o(1)}{2\log L},&j=1,\\[6pt]
				o\left(\dfrac1{\log L}\right),&j\ne1.
			\end{cases}
			$$
			Consequently, the Euler-product calculation gives
			$$
			\frac{P_1}{D_\infty}
			=
			\exp\left(
			(\sqrt{\kappa(1-\eta)}+o(1))
			\sqrt{\frac{\log N}{\log\log N}}
			\right),
			$$
			whereas
			$$
			\frac{|P_j|}{D_\infty}
			=
			\exp\left(
			o\left(\sqrt{\frac{\log N}{\log\log N}}\right)
			\right),
			\qquad 2\le j\le h.
			$$
			Applying the same Rankin truncation argument as \cite{aistleitner2017large},
			with absolute values for the cross terms, yields
			$\mathcal{M}_j=P_j+o(D_\infty)$ and
			$\sum_{n\leq N} |r(n)|^2=(1+o(1))D_\infty$.
			Thus $\mathcal{M}_1$ is positive and dominates
			all the cross contributions. Since $\beta_1\ne0$,
			we obtain
			$$
			\begin{aligned}
				|I_F|
				&\ge T\widehat{\Phi}(0)
				\left(
				|\beta_1|\mathcal{M}_1
				-\sum_{j=2}^{h}|\beta_j||\mathcal{M}_j|
				\right)
				-O\left(T\sum_{n\le N}|r(n)|^2\right)\\
				&\gg T
				\exp\left(
				(\sqrt{\kappa(1-\eta)}+o(1))
				\sqrt{\frac{\log N}{\log\log N}}
				\right) \sum_{n\le N}|r(n)|^2.
			\end{aligned}
			$$
			Hence, the inequality
			$$
			\max_{T\le t\le2T}
			\left|F\left(\frac12+it\right)\right|
			\ge
			\frac{|I_F|}
			{\int_{\mathbb R}|R(t)|^2\Phi(t/T)\,dt}
			$$
			give
			$$
			\max_{T\le t\le2T}
			\left|F\left(\frac12+it\right)\right|
			\gg
			\exp\left(
			(\sqrt{\kappa\alpha(1-\eta)}+o(1))
			\sqrt{\frac{\log T}{\log\log T}}
			\right).
			$$
			Choosing $\alpha<1$ sufficiently close to $1$
			and $\eta>0$ sufficiently small completes
			the proof.
		\end{proof}

\begin{thm}\label{thm:general linear combination hardy}
	Let $F(s)$ be as in
	Theorem~\ref{thm:linear combination selberg}.
	Suppose further that all the constituent
	$L$-functions $F_j$ have the same degree
	$d\in\{1,2\}$ and satisfy the functional equations
	$$
	F_j(s)=\omega_jH_0(s)
	\overline{F_j}(1-\overline{s}),
	\qquad 1\le j\le h,
	$$
	where $H_0(s)$ is common to all the
	$L$-functions $F_j$ and $|\omega_j|=1$.
	Assume that
	\be\label{condition on functional equation}
	\beta_j\omega_j=\omega\overline{\beta_j},
	\qquad 1\le j\le h,
	\ee
	for some $\omega\in\mathbb C$ with $|\omega|=1$.
	If $d=2$, suppose additionally that $ \Im \xi_{ F_j} =0$
	and that each
	$F_j$ satisfies Hypothesis~\ref{hyp}, for
	$1\le j\le h$.
	
	Let $Z_F(t)$ denote the corresponding Hardy
	function. Then, for every $\epsilon>0$
	and all sufficiently large $T$,
	$$
	\max_{T\le t\le2T}Z_F^\pm(t)
	\gg
	\exp\left(
	(C_F-\epsilon)
	\sqrt{\frac{\log T}{\log\log T}}
	\right),
	$$
	where 
	$$
	C_F=
	\begin{cases}
		\dfrac12, & d=1,\\[6pt]
		\sqrt{\kappa(1-\gamma)}, & d=2,
	\end{cases}
	$$
	with $\kappa>0$ as in
	Theorem~\ref{thm:linear combination selberg}
	and $\gamma$ as in Hypothesis~\ref{hyp}.
\end{thm}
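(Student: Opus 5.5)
The plan is to mirror the proofs of Theorems~\ref{thm:S1} and \ref{thm general}, with $F=\sum_j\beta_jF_j$ in place of a single $L$-function.

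\emph{Step 1: the Hardy function.} The common functional equation together with \eqref{condition on functional equation} gives
\[
F(s)=\sum_j\beta_j\omega_jH_0(s)\overline{F_j}(1-\bar s)=\omega H_0(s)\sum_j\overline{\beta_j}\,\overline{F_j}(1-\bar s)=\omega H_0(s)\overline{F}(1-\bar s).
\]
So $F$ satisfies a functional equation of the form \eqref{fun eq F=SF bar} with $H=\omega H_0$. We then set $Z_F(t)=F(1/2+it)H(1/2+it)^{-1/2}$, which is real and satisfies $|Z_F(t)|=|F(1/2+it)|$. By linearity, and for a fixed choice of the square root,
\[
Z_F(t)=\sum_j\beta_j\,\omega^{-1/2}F_j(1/2+it)H_0(1/2+it)^{-1/2}.
\]
Each summand is a unimodular constant multiple of the Hardy function $Z_{F_j}$ (this uses $H_j=\omega_jH_0$). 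All integrals of $Z_F$ against $|R(t)|^2$ therefore split into finitely many integrals of the type already estimated.

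\emph{Step 2: lower bound.} Take the resonator $r(n)=a_1(n)f(n)$ with $\beta_1\neq0$, exactly as in Theorem~\ref{thm:linear combination selberg}. For $d=1$ choose $N=\lfloor T^{1/4}\rfloor$; for $d=2$ choose $N=\lfloor T^\alpha\rfloor$ with $\alpha<1-\gamma$. The computation of $I_F=\int F(1/2+it)|R(t)|^2\Phi(t/T)\,dt$ in the proof of Theorem~\ref{thm:linear combination selberg} works for any $N=T^\alpha$ with $\alpha<1$. It gives
\[
|I_F|\gg T\exp\Big((\sqrt{\kappa(1-\eta)}+o(1))\sqrt{\tfrac{\log N}{\log\log N}}\Big)\sum_{n\le N}|r(n)|^2 .
\]
Converting $\log N$ into $\log T$ yields the factor $\sqrt{\alpha}$. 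For $d=1$ this gives the constant $1/2$: in degree $1$ the normality constant is $\kappa=1$, since $|a_j(p)|=1$ for $p$ outside the conductor. For $d=2$ it gives $\sqrt{\kappa(1-\gamma)}$ after letting $\alpha\to1-\gamma$ and $\eta\to0$.

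\emph{Step 3: upper bound for the signed integral.} This is where the work lies. Since $|R|^2\ge0$, each piece reduces to bounding $\int Z_{F_j}(t)|R(t)|^2\,dt$ for a resonator whose coefficients come from $a_1$ rather than $a_j$. Fortunately, neither Lemma~\ref{lem:I1 bound} nor Lemma~\ref{Lemma upper bound of JF} uses any arithmetic relation between $r$ and $F$. They use only $|r(n)|$ together with the Cauchy--Schwarz bound \eqref{cauchy}.

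For $d=1$, each $F_j$ is a shifted Dirichlet $L$-function. Because the $F_j$ share the factor $H_0$, they share the shift, so the argument of Lemma~\ref{lem:I1 bound} applies to each $j$, with the integration window translated if necessary. It gives $O(T\sum|r(n)|^2)$ per $j$, which is $o(A(T)\int|R|^2\Phi)$.

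For $d=2$, the contour shift of Lemma~\ref{Lemma upper bound of JF} applies to each $F_j$ separately, using $\Im\xi_{F_j}=0$ and Hypothesis~\ref{hyp} for $a_j$. It gives $J_{F_j}=o(T\sum|r(n)|^2)$, and summing over $j$ with the weights $|\beta_j|$ preserves this. The only point to check is that the asymptotic \eqref{Hs -1/2} has a common form for all $j$, since $H_0$ is shared. Otherwise the argument is routine.

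\emph{Step 4: conclusion.} Feed these two bounds into Proposition~\ref{mahatap prop} in degree $1$, or into the $Z_F^\pm$ decomposition used in the proof of Theorem~\ref{thm general} in degree $2$. This yields the stated bounds for $Z_F^\pm$.

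The main obstacle I expect is Step 1's bookkeeping: making the square-root branches consistent so that $Z_F$ is genuinely real and decomposes as a combination of the individual $Z_{F_j}$. A related point is Step 2's positivity, namely that $\mathcal M_1$ dominates the cross terms $\mathcal M_j$ under the orthonormality condition \eqref{strong version of normality}. Both are inherited from Theorem~\ref{thm:linear combination selberg}.
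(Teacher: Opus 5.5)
Your proposal is correct and follows the paper's route: derive $F(s)=\omega H_0(s)\overline{F}(1-\bar s)$ from \eqref{condition on functional equation}, take the lower bound for $I_F$ from the proof of Theorem~\ref{thm:linear combination selberg}, and bound the signed integral by writing $Z_F$ as a combination of the $Z_{F_j}$, applying the argument of Theorem~\ref{thm:S1} ($d=1$) or Lemma~\ref{Lemma upper bound of JF} ($d=2$) to each piece, which is legitimate since those upper bounds depend on the resonator only through $|r(n)|$. Two harmless slips: the coefficient of $Z_{F_j}$ is $\pm\beta_j(\omega_j/\omega)^{1/2}$, which by \eqref{condition on functional equation} is real of modulus $|\beta_j|$ rather than unimodular; and in degree $2$ you should take $\gamma$ to be the largest of the exponents given by Hypothesis~\ref{hyp} for the individual $F_j$, so that a single $\alpha<1-\gamma$ works for every $F_j$.
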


	\begin{proof}
		By the functional equations and the
		condition  \eqref{condition on functional equation}, we obtain
		$$
		F(s)=\omega H_0(s)
	{	\overline{F}(1-\overline{s})}.
		$$
		Consequently, the corresponding Hardy function
		$$
		Z_F(t)=
		\left(\omega H_0(\tfrac12+it)\right)^{-1/2}
		F\left(\frac12+it\right)
		$$
		is real-valued, and
		$|Z_F(t)|=|F(\frac12+it)|$.
		
		Choose $N\leq T^\alpha$, with
		$\alpha<1$, and the same resonator
		$R(t)$ as in the proof of
		Theorem~\ref{thm:linear combination selberg}.
		By the argument given there, we have
		$$
		I_F\gg
		T\exp\left(
		(\sqrt{\kappa(1-\eta)}+o(1))
		\sqrt{\frac{\log N}{\log\log N}}
		\right)\sum_{n\le N}|r(n)|^2.
		$$

Moreover, since $F$ is a finite linear combination
of the $L$-functions $F_j$, the required upper bound
follows by applying the corresponding estimates
to each $L$-functions $F_j$. If $d=1$, we proceed as in
the proof of Theorem~\ref{thm:S1}. If $d=2$, we proceed as in the
proof of Theorem~\ref{thm general}. This completes the proof.
	
\end{proof}

\begin{rem}
	An example in degree $1$ is given by linear
	combinations of distinct primitive Dirichlet
	$L$-functions having the same conductor and parity,
	since their functional-equation factors coincide.
	In this case, the conclusion of
	Theorem~\ref{thm:general linear combination hardy}
	holds with $C_F=1/2$, provided the coefficients
	satisfy \eqref{condition on functional equation}.
	
	Similarly, in degree $2$, we may consider linear
	combinations of $L$-functions associated with distinct
	normalized primitive holomorphic Hecke eigenforms of
	the same weight and level, whose nebentypus characters
	need not be the same. In this case, the conclusion
	holds with $C_F=1/\sqrt{2}$, provided the coefficients
	satisfy \eqref{condition on functional equation}.
\end{rem}

\begin{example}
	Consider
	$$
	g(z)=\Delta(z)^2,
	$$
	where $\Delta(z)$ is the Ramanujan cusp form of
	weight $12$. Then $g\in S_{24}(\mathrm{SL}_2(\mathbb Z))$,
	and its first Fourier coefficient is zero.
		Since this space has dimension two, we can write
	$$
	g=c(f_1-f_2),
	$$
	where $c$ is a nonzero real constant and
	$f_1,f_2$ are distinct normalized primitive
	holomorphic Hecke eigenforms of weight $24$
	and level one.
	Both associated $L$-functions have root number $+1$.
	Thus, the condition
	$\beta_j\omega_j=\omega\overline{\beta_j}$
	is satisfied with $\beta_1=c$, $\beta_2=-c$,
	and $\omega=1$.
	
	Consequently,
	$$
	F_g(s)=L\left(s+\frac{23}{2},g\right)
	=c\left(F_1(s)-F_2(s)\right),
	$$
	where $F_1$ and $F_2$ are the corresponding normalized $L$-functions.
	Therefore, Theorem~\ref{thm:general linear combination hardy}
	applies to $F_g$ with $C_F=1/\sqrt{2}$.
	
	This provides an example of a Dirichlet series
	satisfying a degree-two functional equation
	and the conclusion of
	Theorem~\ref{thm:general linear combination hardy},
	but not belonging to the Selberg class,
	since its first Dirichlet coefficient is zero
	and its coefficients are not multiplicative.
\end{example}

	%	\normalcolor	

		%%%%%%%%%%%%%%%%%%
		
		\section{Acknowledgments}
		
		A.M.~Hossain is supported by the Institute Fellowship from IIT Kharagpur, India.
		K.~Mahatab is  supported by the ARG-MATRICS Grant
		(Grant No.~ANRF\slash ARGM\slash 2025\slash 002540\slash MTR).

		%\newpage
		%\bibliographystyle{plain}
		%\bibliography{reference}
	\end{document}